\newtheorem{theorem}{Theorem}[section]
\newtheorem*{theorem*}{Theorem}
\newtheorem{lemma}[theorem]{Lemma}
\newtheorem{proposition}[theorem]{Proposition}
\newtheorem*{proposition*}{Proposition}
\newtheorem{corollary}[theorem]{Corollary}
\newtheorem*{corollary*}{Corollary}
\newtheorem{definition*}{Definition}
\newtheorem{remark}[theorem]{Remark}
\newtheorem{remark*}{Remark}
\newtheorem{conjecture*}{Conjecture}
\newcommand{\bA}{{\mathbb A}}
\newcommand{\caM}{{\mathcal M}}
\newcommand{\caO}{{\mathcal O}}
\newcommand{\caH}{{\mathcal H}}
\renewcommand{\P}{{\mathbb P}}
\newcommand{\D}{{\mathsf D}}
\newcommand{\naturals}{{\mathbb N}}
\newcommand{\integers}{{\mathbb Z}}
\newcommand{\Hom}{\mathrm{Hom}}
\newcommand{\MGL}{\mathsf{MGL}}
\newcommand{\SH}{\mathsf{SH}}
\newcommand{\KGL}{\mathsf{KGL}}
\newcommand{\MU}{\mathsf{MU}}
\newcommand{\MZ}{\mathsf{M}\mathbb{Z}}
\newcommand{\MA}{\mathsf{M}A}
\newcommand{\MR}{\mathsf{M}R}
\newcommand{\Sm}{\mathrm{Sm}}
\newcommand{\Spec}{\mathrm{Spec}}
\newcommand{\f}{\mathsf{f}}
\newcommand{\Sh}{\mathrm{Sh}}
\newcommand{\hocolim}{\mathrm{hocolim}}
\newcommand{\holim}{\mathrm{holim}}
\newcommand{\h}{\mathsf{h}}
\newcommand{\Zar}{\mathit{Zar}}
\newcommand{\Nis}{\mathit{Nis}}
\newcommand{\GmS}{\mathbb{G}_{m,S}}
\newcommand{\Pic}{\mathrm{Pic}}
\newcommand{\upi}{\underline{\pi}}
\newcommand{\pre}{\mathrm{pre}}
\newcommand{\se}{{S^1}}
\title{Algebraic Cobordism in mixed characteristic}
\author{Markus Spitzweck}
\begin{document}

\maketitle

\begin{abstract}
We compute the geometric part of algebraic cobordism over Dedekind domains
of mixed characteristic after inverting the positive residue characteristics
and prove cases of a Conjecture of Voevodsky relating this geometric
part to the Lazard ring for regular local bases.
The method is by analyzing the slice tower of algebraic cobordism,
relying on the Hopkins-Morel isomorphism from the quotient
of the algebraic cobordism spectrum by the generators of the Lazard ring
to the motivic Eilenberg-MacLane spectrum, again after inverting the positive
residue characteristics.
\end{abstract}

\section{Introduction}

Algebraic cobordism is a theory for smooth schemes over a base scheme $S$
defined by a motivic ring spectrum $\MGL_S$ in the stable motivic homotopy
category $\SH(S)$. It is the motivic counterpart of complex cobordism $\MU$.
A famous Theorem of Quillen states that the natural map from the Lazard
ring $L_*$ classifying formal group laws to the coefficients of $\MU$
is an isomorphism, moreover $L_* \cong \integers[x_1,x_2,x_3,\ldots]$ with
$\deg(x_i)=i$ (here we divide the usual topological grading by $2$).

For an oriented motivic ring spectrum $E$ the geometric part $E_{(2,1)*}$
of the coefficients also carries a formal group law constructed in the exact same way
as in topology by evaluating the theory on $\P^\infty$ and using that $\P^\infty$
is naturally endowed with a multiplication. 

Thus there is a classifying map $L_* \to E_{(2,1)*}$. It is known
that for $E=\MGL_k$ for a field $k$ of characteristic $0$
this map is an isomorphism using the Hopkins-Morel isomorphism,
see \cite[Proposition 8.2]{hoyois.hopkins-morel}. More generally
in  \cite{levine.comparison} it is shown that over such fields the Levine-Morel
algebraic cobordism $\Omega^*(-)$ is isomorphic to $\MGL_k^{(2,1)*}(-)$
on smooth schemes over $k$.
If the base field $k$
has positive characteristic the map $L_* \to \MGL_{(2,1)*}$ becomes at least an isomorphism after
inverting the characteristic, see again \cite[Proposition 8.2]{hoyois.hopkins-morel}.

The main ingredient in the proof is that the Hopkins-Morel isomorphism yields
a computation of the slices of $\MGL_S$ with respect to Voevodsky's slice filtration,
that $\MGL_S$ is complete with respect to this filtration and that the slices
have a simple form, namely they are shifted twists of the motivic Eilenberg-MacLane spectrum.

The facts about the slices of $\MGL_S$ hold more generally true
over spectra $S$ of Dedekind domains of mixed characteristic (after inverting the positive
residue characteristics), using the motivic Eilenberg-MacLane spectrum introduced
in \cite{spitzweck.em}. The main new input of this note
is that in this case $\MGL_S$ is also complete with respect to the slice filtration (Corollary \ref{gfrerz}),
a consequence of the fact that $\MGL_S$ is connective with respect to the homotopy sheaves,
see Proposition \ref{gdrttt}.

This yields a computation of the geometric part of the homotopy groups
of $\MGL_S$ (Theorem \ref{grfe5z4}), again after inverting the residue characteristics.
In our formulation we always assume a Hopkins-Morel isomorphism for the given coefficients,
hoping that the Hopkins-Morel isomorphism will be settled completely in the future.

We prove cases of a Conjecture of Voevodsky (\cite[Conjecture 1]{voevodsky.icm}), see
Theorem \ref{hte4234t}, comparing the Lazard ring to $(\MGL_S)_{(2,1)*}$ for
$S$ the spectrum of a regular local ring.

We also give applications to some homotopy groups or sheaves of $\MGL_S$ outside the
geometric diagonal, see section \ref{dzu634e}, and discuss generalizations of our results
to motivic Landweber spectra.

\vskip.4cm

We note that the observation that the Hopkins-Morel isomorphism yields
the computation of the zero-slice of the sphere spectrum (after inverting
suitable primes), see Theorem \ref{dhu5t3}, was independently made by Oliver R\"ondigs.

\vskip.7cm

{\bf Acknowledgements:}
I would like to thank
Peter Arndt, Christian H\"ase\-meyer, Marc Hoyois, Moritz Kerz, Marc Levine, Niko Naumann,
Oliver R\"ondigs, Manfred Stelzer, Florian Strunk, J\"org Wildeshaus and Paul Arne {\O}stv{\ae}r for very helpful
discussions and suggestions on the subject.

\section{Preliminaries}

By a base scheme we always mean a separated Noetherian scheme of finite Krull dimension.
For a base scheme $S$ we let $\SH(S)$ be the stable motivic homotopy category.

We let $\MZ_S \in \SH(S)$ be the motivic Eilenberg-MacLane spectrum over $S$
constructed in \cite{spitzweck.em}. Also we let $\caM(r) \in \D(\Sh(\Sm_{S,\Zar},\integers))$
(for notation see \cite{spitzweck.em})
be the motivic complexes of weight $r \in \integers$, so as a $\GmS$-spectrum $\MZ_S$ has $\caM(r)[r]$ in
level $r$. If $S$ is the spectrum of a Dedekind domain of mixed characteristic
we note that $\caM(0)= S^0 \underline{\integers}$, thus for $X \in \Sm_S$ we have
$H^{0,0}(X,\integers)=\integers^{\pi_0(X)}$. Also $\caM(1) \cong \caO^*[-1]$,
so $H^{1,1}(X,\integers) \cong \caO^*(X)$ and $H^{2,1}(X,\integers) \cong \Pic(X)$.
We have $\caM(r) \cong 0$ for $r < 0$.

For general $S$ we denote by $\MGL_S \in \SH(S)$ the algebraic cobordism spectrum.
There is a natural map $L_* \to (\MGL_S)_{2*,*}$, where $L_*$ denotes the Lazard
ring. Fixing generators $x_i \in L_i$ there is a map
$$\Phi_S \colon \MGL_S/(x_1,x_2,\ldots)\MGL_S \to \MZ_S,$$
see \cite[\S 11.1]{spitzweck.em}, which is an isomorphism after inverting
all positive residue characteristics of $S$, see \cite[Theorem 11.3]{spitzweck.em}.

For any ring or abelian group $R$ we let $M_R \in \SH(S)$ be the Moore spectrum on $R$
and $\MR_S$ the version of $\MZ_S$ with $R$-coeffcients.

\section{Slices}

For $i \in \integers$ denote by $f_i$ resp. $l_i$ the $i$-th colocalization
resp. localization functor for Voevodsky's motivic slice filtration on $\SH(S)$.
For any $E \in \SH(S)$ and $k \ge n$ we set $E\left<n,k \right>:=l_{k+1}(f_n(E))$. Thus we have
exact triangles $$f_{k+1}(E) \to f_n(E) \to E\left<n,k \right> \to f_{k+1}(E)[1]$$
and $s_n(E) = E\left<n,n \right>$.

We note that all these functors commute with homotopy colimits.

\begin{theorem}
\label{dhu5t3}
Let $X$ be an essentially smooth scheme over a Dedekind domain of mixed characteristic
and $R$ a localization of $\integers$ such that $\Phi_X \wedge M_R$ is an isomorphism
(e.g. if every positive residue characteristic of $X$ is invertible in $R$).
Then $$s_0 M_R \cong s_0 (\MGL_X \wedge M_R) \cong \MR_X.$$
More generally $$s_n (\MGL_X \wedge M_R) \cong \Sigma^{2n,n} \MR_X \otimes L_n.$$
\end{theorem}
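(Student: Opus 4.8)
The plan is to reduce everything to the Hopkins–Morel isomorphism $\Phi_X \wedge M_R$ together with standard multiplicativity properties of the slice filtration. Write $E := \MGL_X \wedge M_R$. The key structural input, available from \cite{spitzweck.em}, is that $\MGL_S$ is an $E_\infty$-algebra in $\SH(S)$ oriented in the usual way, so $E$ is an oriented ring spectrum over $X$, and the slice functors $s_n$ are lax symmetric monoidal (the slice filtration is multiplicative); in particular $s_*(E)$ is a graded ring spectrum and the unit map $\unit \to E$ induces $s_0(\unit_R) \to s_0(E)$, where $\unit_R = M_R$ is the $R$-local sphere over $X$. The first reduction is therefore to compute $s_0(E)$.

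First I would recall the computation of the slices of $\MGL$. Over any base, using the standard cellular filtration of $\MGL$ coming from the skeleta of the Grassmannians, one has a description of the associated graded of $\MGL$ whose slices are built from shifted Tate twists of $s_0(\unit)$; concretely, $s_n(\MGL_S) \cong \Sigma^{2n,n} s_0(\unit_S) \otimes L_n$ once one knows $s_0$, and more precisely $\MGL/(x_1,x_2,\ldots)\MGL$ is the "zeroth piece" in a filtration whose graded pieces are the $\Sigma^{2n,n} L_n$-copies of $\MGL/(x_i)\MGL$. Smashing this whole filtration with $M_R$ is harmless because $-\wedge M_R$ is exact and commutes with the relevant homotopy colimits. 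Now invoke the hypothesis: $\Phi_X \wedge M_R \colon (\MGL_X/(x_1,x_2,\ldots)\MGL_X)\wedge M_R \xrightarrow{\ \sim\ } \MR_X$. Combining these two facts identifies $s_n(E)$ with $\Sigma^{2n,n}\MR_X \otimes L_n$, provided we know the bottom slice $s_0(E) \cong \MR_X$; indeed $\MR_X$ is already a $0$-slice (it is effective and $l_1$-local since $\caM(r)$ vanishes for $r<0$ and the motivic cohomology spectrum is orthogonal to $\Sigma^{1,1}$-positive effective objects), so $s_0$ applied to the Hopkins–Morel equivalence gives $s_0(E) \cong s_0(\MR_X) \cong \MR_X$.

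It remains to see $s_0(M_R) \cong s_0(E)$, i.e. that the unit map $M_R \to E$ is a $0$-slice equivalence. For this I would argue that both sides receive compatible maps and that the cofiber of $\unit_R \to \MGL_X \wedge M_R$ lies in the localizing subcategory generated by $\Sigma^{2,1}$-effective spectra, hence has trivial $0$-slice: this is the motivic analogue of the classical fact that $\pi_0$ of the sphere and of $\MU$ agree, here phrased as the statement that the Thom class $\unit \to \MGL$ is a "$1$-effective equivalence". Concretely, the $\MGL$-cellular filtration expresses $\MGL_X$ as $\unit_X$ plus cells of positive Tate weight, so $f_1(\mathrm{cofib}(\unit_X \to \MGL_X)) \simeq \mathrm{cofib}(\unit_X \to \MGL_X)$, whence $s_0$ kills it; smashing with $M_R$ preserves this. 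Therefore $s_0 M_R \cong s_0 E \cong \MR_X$, and feeding this back into the slice computation of the previous paragraph yields $s_n(E) \cong \Sigma^{2n,n}\MR_X \otimes L_n$.

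The main obstacle I anticipate is the second paragraph's bookkeeping: making precise that the cellular/slice filtration of $\MGL_S$ over a Dedekind base behaves exactly as over a field — that the slices are genuinely $\Sigma^{2n,n}s_0(\unit)\otimes L_n$ and not merely filtered by such — and that smashing with the Moore spectrum $M_R$ commutes with the (possibly infinite) homotopy colimits involved without introducing $\lim^1$-type defects. This is where one leans on the results of \cite{spitzweck.em} on the motivic Eilenberg–MacLane spectrum over Dedekind domains and on the fact, recalled in the excerpt, that $\Phi_S \wedge M_R$ is an isomorphism; modulo those inputs the argument is formal.
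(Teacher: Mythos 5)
Your proposal is correct and follows essentially the same route as the paper: the identification $s_0 M_R \cong s_0(\MGL_X \wedge M_R)$ via the $1$-effectivity of the cofiber of the unit map (the paper cites \cite[Corollary 3.3]{spitzweck.rel} for this), the identification of $s_0(\MGL_X\wedge M_R)$ with $\MR_X$ from the Hopkins--Morel hypothesis together with effectivity of $\MR_X$ and the vanishing of negative-weight motivic cohomology, and the general slice formula as the $R$-coefficient version of \cite[Theorem 4.7]{spitzweck.rel}. The extra scaffolding you add (lax monoidality of $s_n$, the explicit cellular filtration) is not needed but does not introduce any gap.
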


\begin{proof}
The first isomorphism of the first line follows from \cite[Corollary 3.3]{spitzweck.rel}.
From the assumption that $\Phi_X \wedge M_R$
is an isomorphism it follows that the map $\MGL_X \wedge M_R \to \MR_X$
induces an isomorphism on zero-slices and that $\MR_X$ is effective.
Moreover $l_1 \MZ_X \cong \MZ_X$, since negative weight motivic cohomology
vanishes in our situation. Thus
the second isomorphism of the first line follows.
The second line is a version of \cite[Theorem 4.7]{spitzweck.rel}  with $R$-coefficients.
\end{proof}

\begin{remark}
It is then also possible to determine the slices of motivic Landweber spectra
with $R$-coeffcients, see \cite{spitzweck.slices}, for example of $\KGL_X \wedge M_R$.
\end{remark}

\section{Subcategories of the stable motivic homotopy category}

Fix a base scheme $S$. We let $\SH(S)_{\ge n}$ be the $\ge n$ part (in the homological sense) of $\SH(S)$
with respect to the homotopy $t$-structure, see e.g. \cite[\S 2.1]{hoyois.hopkins-morel}. Thus $\SH(S)_{\ge n}$
is generated by homotopy colimits and extensions by the objects $\Sigma^{p,q} \Sigma^\infty_+ X$
for $X \in \Sm_S$ and $p-q \ge n$.

For each $E \in \SH(S)$ we let $\upi^\pre_{p,q}(E)$ be the presheaf
$$X \mapsto \Hom_{\SH(S)}(\Sigma^{p,q} \Sigma^\infty_+ X,E)$$ on $\Sm_S$.
Let $\upi_{p,q}(E)$ be the sheafification of $\upi^\pre_{p,q}(E)$
with respect to the Nisnevich topology. We also set $\pi_{p,q}(E):=\upi^\pre_{p,q}(E)(S)=E_{p,q}$.

We let $\SH(S)_{h \ge n}$ be the full subcategory of $\SH(S)$ of objects $E$
such that $\upi_{p,q}(E)=0$ for $p-q< n$.

\begin{lemma}
The categories $\SH(S)_{h \ge n}$ are closed under homotopy colimits and extensions
in $\SH(S)$.
\end{lemma}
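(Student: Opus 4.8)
The statement asserts that $\SH(S)_{h \ge n}$ is closed under homotopy colimits and extensions. The plan is to reduce both closure properties to the exactness properties of Nisnevich sheafification and of the functors $\upi_{p,q}$, exploiting that the vanishing condition defining $\SH(S)_{h \ge n}$ is phrased on the \emph{sheaves} $\upi_{p,q}$, not the presheaves $\upi^\pre_{p,q}$.

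First I would treat extensions. Suppose $E' \to E \to E'' \to E'[1]$ is an exact triangle in $\SH(S)$ with $E', E'' \in \SH(S)_{h \ge n}$. For fixed $X \in \Sm_S$ and fixed $(p,q)$, applying $\Hom_{\SH(S)}(\Sigma^{p,q}\Sigma^\infty_+ X, -)$ to the triangle gives a long exact sequence of presheaves of abelian groups; since $\Sigma^{p,q}\Sigma^\infty_+ X$ is a compact object, this is a genuine long exact sequence, natural in $X$, so we obtain a long exact sequence of presheaves $\cdots \to \upi^\pre_{p,q}(E') \to \upi^\pre_{p,q}(E) \to \upi^\pre_{p,q}(E'') \to \upi^\pre_{p-1,q}(E') \to \cdots$ (with the degree shift coming from the $[1]$). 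Nisnevich sheafification is exact, so applying it yields a long exact sequence of Nisnevich sheaves $\cdots \to \upi_{p,q}(E') \to \upi_{p,q}(E) \to \upi_{p,q}(E'') \to \upi_{p-1,q}(E') \to \cdots$. If $p - q < n$ then also $p - 1 - q < n$, so the two outer sheaves $\upi_{p,q}(E')$ and $\upi_{p-1,q}(E')$ vanish (the latter because $(p-1)-q < n$), hence the middle term $\upi_{p,q}(E)$ vanishes by exactness. Thus $E \in \SH(S)_{h \ge n}$.

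Next I would treat homotopy colimits. Let $E = \hocolim_i E_i$ with each $E_i \in \SH(S)_{h \ge n}$. For a compact object $C$ one has $\Hom_{\SH(S)}(C, \hocolim_i E_i) \cong \colim_i \Hom_{\SH(S)}(C, E_i)$; applying this with $C = \Sigma^{p,q}\Sigma^\infty_+ X$ for varying $X$ gives an isomorphism of presheaves $\upi^\pre_{p,q}(E) \cong \colim_i \upi^\pre_{p,q}(E_i)$, where the colimit is the presheaf-level (objectwise) colimit. Sheafification commutes with colimits, being a left adjoint, so $\upi_{p,q}(E) \cong \colim_i \upi_{p,q}(E_i)$ as Nisnevich sheaves. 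If $p - q < n$ each $\upi_{p,q}(E_i) = 0$, so their colimit is $0$, giving $\upi_{p,q}(E) = 0$. Hence $E \in \SH(S)_{h \ge n}$. Strictly, "homotopy colimit" should be interpreted as generated under filtered homotopy colimits together with the coproducts and the exact-triangle closure already handled; coproducts are the special case of filtered colimits over discrete index categories, so the same argument applies.

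The only genuine subtlety — and the step I would be most careful about — is the interchange of sheafification with the long exact sequence and with colimits. Both are formal once one knows Nisnevich sheafification is exact and a left adjoint, which is standard; and the compactness of $\Sigma^{p,q}\Sigma^\infty_+ X$ in $\SH(S)$, which is part of the construction of the stable motivic homotopy category. No deep input is needed beyond these, so I do not expect a real obstacle; the content is entirely in organizing these exactness statements correctly at the level of sheaves rather than presheaves.
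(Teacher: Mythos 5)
Your proof is correct and follows essentially the same route as the paper's: the paper likewise observes that the functors $\upi_{p,q}$ respect sums and that the long exact sequence of homotopy sheaves attached to an exact triangle gives closure under cofibers and extensions. Your version merely spells out the underlying facts (compactness of $\Sigma^{p,q}\Sigma^\infty_+ X$ and exactness of Nisnevich sheafification) in more detail.
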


\begin{proof}
Th functors $\upi_{p,q}$ respect sums. Moreover the long exact sequences of homotopy sheaves
associated to an exact triangle in $\SH(S)$ show that $\SH(S)_{h \ge n}$ is closed under
cofibers and extensions. This shows the claim.
\end{proof}

\begin{proposition}
\label{gegrsy}
Let $i \colon Z \hookrightarrow S$ be a closed inclusion of base schemes.
Then $i_*(\SH(Z)_{h \ge n}) \subset \SH(S)_{h \ge n}$.
\end{proposition}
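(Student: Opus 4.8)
The plan is to reduce the statement to the behavior of $i_*$ on the generators of the homotopy $t$-structure, using that $\SH(Z)_{h \ge n}$ is generated under homotopy colimits and extensions by the objects $\Sigma^{p,q}\Sigma^\infty_+ W$ with $W \in \Sm_Z$ and $p - q \ge n$, together with the fact that $\SH(S)_{h \ge n}$ is closed under homotopy colimits and extensions (the Lemma just proved) and that $i_*$ commutes with homotopy colimits. Actually one must be slightly careful: $i_*$ is a right adjoint, so it need not commute with arbitrary homotopy colimits on the nose; however for a closed immersion of base schemes $i_*$ is known to preserve homotopy colimits (it has a further right adjoint $i^!$, and in fact $i_* = i_!$ is the exceptional pushforward, which is a left adjoint). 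Granting this, it suffices to show $i_*(\Sigma^{p,q}\Sigma^\infty_+ W) \in \SH(S)_{h \ge n}$ for each such generator, i.e. to compute or estimate the homotopy sheaves $\upi_{p',q'}(i_* \Sigma^{p,q}\Sigma^\infty_+ W)$ and show they vanish for $p' - q' < n$.

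The key computational input is a base-change/adjunction identity for the homotopy sheaves of $i_* E$. For $X \in \Sm_S$ one has
$$
\upi^\pre_{p',q'}(i_* E)(X) = \Hom_{\SH(S)}(\Sigma^{p',q'}\Sigma^\infty_+ X, i_* E) = \Hom_{\SH(Z)}(\Sigma^{p',q'}\Sigma^\infty_+ (X \times_S Z), i^* \text{(shift)} \ldots),
$$
but more usefully, since $i^*$ is symmetric monoidal and $i^* \Sigma^\infty_+ X = \Sigma^\infty_+ (X_Z)$ where $X_Z = X \times_S Z \in \Sm_Z$, we get $\upi_{p',q'}(i_* E)$ is computed from $\upi_{p',q'}(E)$ on $\Sm_Z$ via the functor $X \mapsto X_Z$. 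Concretely, I would invoke that for a closed immersion the pushforward $i_*$ is $t$-exact for the homotopy $t$-structures — this is essentially the statement that $i_*$ preserves both connective and coconnective objects — or rather, prove the connectivity half directly: if $E \in \SH(Z)_{h \ge n}$ then the presheaf $\upi^\pre_{p',q'}(i_* E)$ on $\Sm_S$ restricts, after pulling back along $-\times_S Z$, to presheaves built from $\upi^\pre_{*,*}(E)$ which are Nisnevich-locally trivial in the relevant range, and Nisnevich sheafification on $\Sm_S$ can be checked after this pullback on henselian local rings. The essential point is that a henselian local ring $\cO_{X,x}$ for $X \in \Sm_S$ has the property that $\cO_{X,x} \times_S Z$ is again (a product of) henselian local rings essentially smooth over $Z$, so vanishing of $\upi_{p',q'}(E)$ on $\Sm_Z$ in the range $p' - q' < n$ forces vanishing of $\upi_{p',q'}(i_* E)$ on $\Sm_S$ in that range.

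The main obstacle I anticipate is the careful treatment of $i_*$ and homotopy colimits together with the reduction to henselian local rings: one needs that $i_*$ for a closed immersion of \emph{base schemes} (not necessarily smooth, and in a possibly non-smooth ambient situation) genuinely commutes with the homotopy colimits used to build $\SH(Z)_{h\ge n}$, and that the homotopy sheaf computation on $\Sm_S$ localizes correctly. I would address the first via the identification $i_* = i_!$ (Ayoub/Cisinski–Déglise six-functor formalism, or the localization triangle $j_! j^* \to \id \to i_* i^*$ in $\SH$), which immediately gives preservation of homotopy colimits since $i_!$ has a right adjoint $i^!$. For the second, I would reduce the Nisnevich-local vanishing to points by the standard fact that a complex of Nisnevich sheaves on $\Sm_S$ is acyclic iff its stalks at all henselian local rings of points of smooth $S$-schemes vanish, and then observe that base change along $i$ sends such a henselian local ring to an essentially smooth henselian local $Z$-scheme, on which the hypothesis $E \in \SH(Z)_{h\ge n}$ applies. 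The remaining bookkeeping — tracking the shift $(p,q)$ through $i^*$, which is trivial since $i^*\Sigma^{p,q} = \Sigma^{p,q} i^*$ — is routine.
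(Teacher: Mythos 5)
Your operative argument --- check the homotopy sheaves of $i_*E$ on henselian local rings $Y$ of schemes in $\Sm_S$, use the $(i^*,i_*)$ adjunction (plus commutation of $i_*=i_!$ with the relevant colimits) to identify $\upi^\pre_{p,q}(i_*E)(Y)$ with $\upi^\pre_{p,q}(E)(Y\times_S Z)$, and observe that $Y\times_S Z$ is again henselian local so that the hypothesis $E\in\SH(Z)_{h\ge n}$ applies --- is exactly the paper's proof, and it is correct. (Minor point: $Y\times_S Z$ is a quotient of the henselian local ring, hence a single henselian local scheme or empty, not a product.)

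However, the reduction you open with is not valid and you are right to abandon it: $\SH(Z)_{h\ge n}$ is \emph{not} known to be generated under homotopy colimits and extensions by the objects $\Sigma^{p,q}\Sigma^\infty_+W$ with $p-q\ge n$; that generation statement defines $\SH(Z)_{\ge n}$, and the paper only proves the inclusion $\SH(Z)_{h\ge n}\subset\SH(Z)_{\ge n}$, with equality a special feature of fields (Morel's stable connectivity theorem, cited as the Lemma immediately following this Proposition). Over a general base $Z$ an object of $\SH(Z)_{h\ge n}$ need not be built from those generators, so a proof resting on that reduction would have a genuine gap. Since your direct argument does not use it, the proof as a whole stands.
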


\begin{proof}
Let $E \in \SH(Z)_{h \ge n}$.
Let $Y$ be the spectrum of the henselization of a local ring of a scheme from $\Sm_S$.
Then $Y_Z:=Y \times_S Z$ is also the spectrum of a henselian local ring, and
$\upi^\pre_{p,q}(i_*E)(Y) \cong \upi^\pre_{p,q}(E)(Y_Z)=0$ for $p-q < n$
(the first isomorphism holds since $i_*$ commutes with homotopy colimits).
\end{proof}

We let $\SH^\se_s(S)$ be the homotopy category of presheaves of $\se$-spectra on $\Sm_S$ localized
with respect to the Nisnevich topology, and $\SH^\se(S)$ the further $\bA^1$-localization of that category.

We let $\SH^\se_s(S)_{\ge n}$ be the $\ge n$ part (in the homological sense)
of $\SH^\se_s(S)$ with respect to the standard $t$-structure, and for $E \in \SH^\se_s(S)$ we let $E_{\ge n}$ and
$E_{\le n}$ be the corresponding truncations. We let $E_{=n} := (E_{\ge n})_{\le n}$.

As above for $E \in \SH^\se_s(S)$ we have the presheaves $\upi^\pre_p(E)$ and the sheaves
$\upi_p(E)$. For $E \in \SH^\se_s(S)$ we have $E \in \SH^\se_s(S)_{\ge n}$ if and only
if $\upi_k(E)=0$ for $k < n$.

Note that $\SH^\se_s(S)_{\ge n}$ is generated by homotopy colimits and extensions by
the objects $\Sigma^n \Sigma^\infty_+ X$, $X \in \Sm_S$, thus the canonical
functor $\sigma \colon \SH^\se_s(S) \to \SH(S)$ sends $\SH^\se_s(S)_{\ge n}$ to $\SH(S)_{\ge n}$.

\begin{lemma}
We have $\SH(S)_{h \ge n} \subset \SH(S)_{\ge n}$. If $S$ is the spectrum of a field
then the inclusion is an equality.
\end{lemma}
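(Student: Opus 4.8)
The plan is to prove the two assertions separately. For the inclusion $\SH(S)_{h \ge n} \subset \SH(S)_{\ge n}$, I would argue by a reduction on generators combined with a connectivity (Postnikov-type) argument. Recall $\SH(S)_{\ge n}$ is generated under homotopy colimits and extensions by the $\Sigma^{p,q}\Sigma^\infty_+ X$ with $p - q \ge n$; equivalently, $E \in \SH(S)_{\ge n}$ if and only if $E$ lies in the localizing-plus-extension-closed subcategory generated by these. Given $E \in \SH(S)_{h \ge n}$, I want to build a cellular filtration (a Postnikov-type tower for the homotopy $t$-structure) whose associated graded pieces are Eilenberg--MacLane-type objects built out of the homotopy sheaves $\upi_{p,q}(E)$, which vanish for $p - q < n$. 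Concretely, each $\upi_{p,q}(E)$ is a Nisnevich sheaf, and the corresponding "slice" in the homotopy $t$-structure is an object of the form $\Sigma^{p,q} H(\upi_{p,q}(E))$ for an appropriate Eilenberg--MacLane functor; since such sheaves are filtered colimits of representables, these pieces already lie in $\SH(S)_{\ge p - q} \subset \SH(S)_{\ge n}$ when $p - q \ge n$. Assembling via the tower and using that $\SH(S)_{\ge n}$ is closed under homotopy colimits and extensions, one concludes $E \in \SH(S)_{\ge n}$. The subtlety is whether the homotopy $t$-structure is left-complete (so that $E$ is recovered from its tower); over a general base this requires care, but one can instead argue more directly: if $E \notin \SH(S)_{\ge n}$ then, since $\SH(S)_{\ge n}$ is the nonnegative part of a $t$-structure, the truncation $\tau_{\le n-1}^{\mathrm{hty}} E$ is nonzero, and a nonzero object in $\SH(S)_{\le n-1}$ must have a nonvanishing homotopy sheaf $\upi_{p,q}$ with $p - q \le n - 1$, i.e.\ $p - q < n$, contradicting $E \in \SH(S)_{h\ge n}$. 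This second route is cleaner and avoids completeness issues, provided one knows that the homotopy $t$-structure on $\SH(S)$ exists and is nondegenerate enough that $\SH(S)_{\le n-1}$ consists exactly of objects whose $\upi_{p,q}$ vanish for $p - q \ge n$; I would cite the setup in \cite[\S 2.1]{hoyois.hopkins-morel}.

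For the reverse inclusion when $S = \Spec k$ is the spectrum of a field, I would invoke Morel's structure theory of the homotopy $t$-structure over a field. The point is that over a field, the heart of the homotopy $t$-structure is the category of homotopy modules (strictly $\mathbb{A}^1$-invariant sheaves with transfers / Gm-stabilized structure), and crucially the homotopy $t$-structure over a field is \emph{nondegenerate}: an object $E$ lies in $\SH(k)_{\ge n}$ if and only if $\upi_{p,q}(E) = 0$ for $p - q < n$. This is essentially Morel's connectivity theorem together with the identification of $\SH(k)_{\ge n}$ via the $t$-structure truncations. So over a field, $\SH(k)_{\ge n}$ and $\SH(k)_{h \ge n}$ are defined by the same vanishing condition, hence coincide. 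I would write: "By \cite[\S 2.1]{hoyois.hopkins-morel} (relying on Morel's connectivity theorem), over a field the homotopy $t$-structure is characterized by the vanishing of the homotopy sheaves, so both subcategories agree."

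The main obstacle I anticipate is the first inclusion over a general (non-field) base, specifically pinning down exactly how $\SH(S)_{\ge n}$ relates to the homotopy $t$-structure truncations without having Morel-type connectivity available. Over a general base one does not know that $\SH(S)_{\ge n}$ is the nonnegative part of the homotopy $t$-structure in the strong sense (the generation statement only gives one containment a priori), so the clean truncation argument above might need to be replaced by the explicit Postnikov-tower construction, which then does require left-completeness of (or at least a convergence statement for) the tower on the object $E$ at hand. I would handle this by noting that the statement only claims $\SH(S)_{h\ge n} \subset \SH(S)_{\ge n}$, and for that direction it suffices to exhibit each $E \in \SH(S)_{h \ge n}$ as built from cells in the correct range: filter $E$ by its homotopy-sheaf Postnikov tower, observe each layer is a (shifted, twisted) Eilenberg--MacLane object associated to a sheaf $\upi_{p,q}(E)$ with $p - q \ge n$, hence lies in $\SH(S)_{\ge n}$, and use closure under homotopy colimits and extensions together with the (bounded-below) convergence of the tower. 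The remaining care is the convergence of this tower for objects of $\SH(S)_{h\ge n}$, which should follow since such objects are bounded below for the homotopy $t$-structure.
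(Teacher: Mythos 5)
There is a genuine gap in your argument for the inclusion $\SH(S)_{h\ge n}\subset\SH(S)_{\ge n}$, and it sits exactly at the point you flag as delicate. Your ``cleaner'' route is circular: from the triangle $\tau_{\ge n}E\to E\to\tau_{\le n-1}E$ you can only deduce that $\upi_{p,q}(\tau_{\le n-1}E)$ vanishes for $p-q<n$ if you already know that $\upi_{p-1,q}(\tau_{\ge n}E)=0$ in that range, i.e.\ that $\SH(S)_{\ge n}\subset\SH(S)_{h\ge n-1}$ or so. That is precisely Morel's stable connectivity theorem, which is available over a field but \emph{not} over a general base --- and its absence is the very reason the lemma only asserts an inclusion in general. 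The vanishing of $\upi_{p,q}(E)$ by itself says nothing about $\upi_{p,q}(\tau_{\le n-1}E)$. Your first (Postnikov-tower) route has the same problem in disguise: identifying the layers of the tower for the homotopy $t$-structure with Eilenberg--MacLane objects on the sheaves $\upi_{p,q}(E)$ again presupposes that the truncation functors are computed by, or at least detected by, the homotopy sheaves, which is the unavailable converse direction. (The convergence issue you mention is real too, but secondary.)

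The paper avoids all of this by leaving $\SH(S)$ entirely: for each $i$ it takes $E_i$, the image of $\Sigma^{i,i}E$ in the \emph{non-$\bA^1$-localized} Nisnevich-local $S^1$-spectrum category $\SH^{\se}_s(S)$, where the standard $t$-structure \emph{is} detected by homotopy sheaves for classical reasons (no $\bA^1$-localization is involved). The hypothesis $E\in\SH(S)_{h\ge n}$ gives $\upi_k(E_i)=\upi_{k+i,i}(E)=0$ for $k<n$, hence $E_i\in\SH^{\se}_s(S)_{\ge n}$; the stabilization functor $\sigma$ sends $\SH^{\se}_s(S)_{\ge n}$ into $\SH(S)_{\ge n}$ by the generators-and-colimits description, $\Sigma^{-i,-i}$ preserves $\SH(S)_{\ge n}$, and $E\cong\hocolim_i\Sigma^{-i,-i}\sigma(E_i)$ finishes the proof. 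This level-wise detour is the missing idea in your write-up. Your treatment of the field case (citing Morel's connectivity theorem so that both subcategories are cut out by the same vanishing condition) is correct and matches the paper's citation.
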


\begin{proof}
Let $E \in \SH(S)_{h \ge n}$. For any $i \in \naturals$ let $E_i$ be the image
of $\Sigma^{i,i} E$ in $\SH^\se_s(S)$. By assumption we have $E_i \in \SH^\se_s(S)_{\ge n}$.
Thus $\Sigma^{-i,-i} \sigma(E_i) \in \SH(S)_{\ge n}$. The proof of the first statement concludes by noting
that $E \cong \hocolim_{i \to \infty} \Sigma^{-i,-i} \sigma(E_i)$.

The second statement is \cite[Theorem 2.3]{hoyois.hopkins-morel}.
\end{proof}

\begin{lemma}
\label{hterge}
Let $E \in \SH^\se_s(S)$. Then $E \to \holim_{n \to \infty} E_{\le n}$ is an isomorphism.
\end{lemma}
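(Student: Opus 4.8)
The plan is to reduce the statement to a computation of homotopy sheaves and then to invoke the fact that the standard $t$-structure on $\SH^\se_s(S)$ is left- and right-complete. Concretely, the target $\holim_{n\to\infty} E_{\le n}$ is the limit of the Postnikov (co)tower of $E$ with respect to the standard $t$-structure, so the question is whether $E$ is \emph{cohomologically complete} for this $t$-structure. First I would recall that $\SH^\se_s(S)$ is the homotopy category of presheaves of $\se$-spectra on $\Sm_S$, localized at the Nisnevich topology; a model for this is Nisnevich-local presheaves of spectra, and the standard $t$-structure is the one whose connective part $\SH^\se_s(S)_{\ge n}$ is detected by the vanishing of the Nisnevich sheaves $\upi_k$ for $k<n$, exactly as stated in the excerpt. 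So the content is purely sheaf-theoretic: I must show that for the map $E \to \holim_n E_{\le n}$, every $\upi_k$ is an isomorphism of Nisnevich sheaves.

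The key steps, in order, would be: (1) For fixed $k$, analyze the tower $\cdots \to E_{\le n+1} \to E_{\le n} \to \cdots$. There is a Milnor-type exact sequence for $\upi^\pre_k$ of a homotopy limit of a tower of presheaves of spectra, evaluated on each $X \in \Sm_S$, giving a short exact sequence with a $\lim^1$ term of the groups $\upi^\pre_{k+1}(E_{\le n})(X)$. (2) Observe that for $n \ge k+1$ the truncation map induces isomorphisms $\upi^\pre_m(E_{\le n})(X) \to \upi^\pre_m(E_{\le n-1})(X)$ is not quite right — rather, $\upi_m(E_{\le n}) \cong \upi_m(E)$ for $m \le n$ and $\upi_m(E_{\le n}) = 0$ for $m > n$ at the sheaf level; on presheaves there can be correction terms, but the relevant tower of presheaves $n \mapsto \upi^\pre_{k+1}(E_{\le n})(X)$ is eventually \emph{pro-constant} up to the behavior governed by the hypercohomology of $X$ in the Nisnevich topology. (3) Here is where the finite Krull dimension of $S$ enters decisively: a base scheme is assumed to have finite Krull dimension, so $\Sm_S$ has finite Nisnevich cohomological dimension (local rings of smooth $S$-schemes have Krull dimension bounded in terms of $\dim S$ plus the relative dimension, and Nisnevich cohomological dimension is bounded by Krull dimension). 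Therefore, on any fixed $X$, the spectral sequence computing $\upi^\pre_k(E_{\le n})(X)$ from the sheaves $\upi_j(E_{\le n})$ has a bounded number of contributing rows, which forces the tower $n \mapsto \upi^\pre_{k+1}(E_{\le n})(X)$ to be eventually constant, hence $\lim^1$ vanishes and the $\lim$ computes $\upi^\pre_k(E)(X)$ after sheafification. (4) Conclude that $E \to \holim_n E_{\le n}$ is a $\upi_*$-isomorphism of Nisnevich sheaves for every degree, hence an isomorphism in $\SH^\se_s(S)$.

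The main obstacle I anticipate is step~(3): making the bound on Nisnevich cohomological dimension uniform enough to kill the $\lim^1$ term. One must be careful that although each individual $X \in \Sm_S$ has finite Nisnevich cohomological dimension, the map $E \to \holim_n E_{\le n}$ must be an isomorphism globally, so the argument has to run objectwise over all of $\Sm_S$ and then sheafify; this is fine because being an isomorphism in $\SH^\se_s(S)$ is detected on the homotopy sheaves $\upi_k$, which are themselves detected on stalks, i.e.\ on henselian local rings of smooth $S$-schemes, where the cohomological dimension is the Krull dimension of that local ring and hence finite. A clean way to package this is to note that $\SH^\se_s(S)$, being (equivalent to) the $\infty$-categorical Nisnevich-localization of presheaves of spectra on a site of finite cohomological dimension, is \emph{hypercomplete} and its standard $t$-structure is both left- and right-complete; Lemma~\ref{hterge} is then the right-completeness statement. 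I would phrase the proof either as the explicit $\lim^1$ argument above or, more briefly, by citing left/right completeness of the $t$-structure on a Nisnevich-local category of presheaves of spectra over a finite-dimensional base, e.g.\ along the lines of \cite[\S 2.1]{hoyois.hopkins-morel}.
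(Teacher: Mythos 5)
Your proposal is correct and follows essentially the same route as the paper: compute $\upi^\pre_k$ of the tower on a fixed $X\in\Sm_S$, use the bound on Nisnevich cohomological dimension by $\dim X$ to show the groups $\upi^\pre_k(E_{\le n})(X)$ stabilize for $n\ge k+\dim X$ (the paper phrases this as the vanishing of $\upi^\pre_k(E_{=m}[j])(Y)$ for $m>k+\dim X$), kill the $\lim^1$ term in the Milnor sequence, and sheafify. The paper uses exactly this explicit argument rather than the abstract appeal to right-completeness that you mention as an alternative.
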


\begin{proof}
We show that for all $n \in \integers$ we have $\upi_n(E) \cong \upi_n(\holim_{k \to \infty} E_{\le k})$.
Fix $n \in \integers$ and let $X \in \Sm_S$ be of dimension $d$. We are ready if we show
$$\upi_n(E)|_{X_\Nis} \cong \upi_n(\holim_{k \to \infty} E_{\le k})|_{X_\Nis} \; \; (*).$$ For $m > n+d$
we have $\upi^\pre_n(E_{=m}[j])(Y)=0$ for $Y \in X_\Nis$ and $j \ge 0$, so homing out of $\Sigma^\infty_+ Y$ into
the exact triangle
$$E_{=m} \to E_{\le m} \to E_{\le (m-1)} \to E_{=m}[1]$$
shows that $\upi^\pre_n(E_{\le m})(Y) \cong \upi^\pre_n(E_{\le (m-1)})(Y)$.
Using the Milnor exact sequence this shows that
$$\upi^\pre_n(\holim_{k \to \infty} E_{\le k})|_{X_\Nis} \cong \upi^\pre_n(E_{\le m})|_{X_\Nis}$$
for $m \ge n+d$. Sheafifiying proves $(*)$.
\end{proof}

\begin{corollary}
\label{dwrzjj}
Let $$\cdots \to E_{i+1} \to E_i \to E_{i-1} \to \cdots \to E_1 \to E_0$$
be an inverse system of objects in $\SH(S)$. Suppose for each $n \in \naturals$ there is an $N \in \naturals$
such that $E_i \in \SH(S)_{h \ge n}$ for $i \ge N$. Then $\holim_{i \to \infty} E_i \cong 0$.
\end{corollary}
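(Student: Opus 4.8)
The plan is to compute the homotopy sheaves of $\holim_i E_i$ directly and show they all vanish. Fix integers $p, q$ and set $n := p - q + 1$, so that showing $\upi_{p,q}(\holim_i E_i) = 0$ for all $p, q$ is the same as showing that for every $n$, the homotopy sheaves in degrees $p - q < n$ vanish; equivalently it suffices to prove $\upi_{p,q}(\holim_i E_i) = 0$ whenever $p - q < n$ for the given $n$. Choose, using the hypothesis, an $N$ with $E_i \in \SH(S)_{h \ge n}$ for all $i \ge N$. The inverse system $(E_i)_{i \ge 0}$ has the same homotopy limit as its cofinal subsystem $(E_i)_{i \ge N}$, so we may as well assume $E_i \in \SH(S)_{h \ge n}$ for all $i$.

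Next I would relate the homotopy limit in $\SH(S)$ to a homotopy limit computed levelwise in the $S^1$-stable model, exactly as in the proof that $\SH(S)_{h \ge n} \subset \SH(S)_{\ge n}$: applying $\Sigma^{j,j}$ and passing to $\SH^\se_s(S)$ commutes with homotopy limits, and $\holim$ over a tower is detected on each $S^1$-spectrum level. For a fixed $X \in \Sm_S$ of dimension $d$ and a fixed degree, the Milnor exact sequence expresses $\upi^\pre_{p,q}(\holim_i E_i)(Y)$ for $Y \in X_\Nis$ in terms of $\lim$ and $\lim^1$ of the groups $\upi^\pre_{p,q}(E_i)(Y)$ and $\upi^\pre_{p,q+1? }$—more precisely of $\upi^\pre_{p,q}(E_i)(Y)$ and $\upi^\pre_{p+1,q}(E_i)(Y)$. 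But $E_i \in \SH(S)_{h \ge n}$ means precisely $\upi_{p,q}(E_i) = 0$ for $p - q < n$; sheaves vanish, hence (after restricting to a Nisnevich point, i.e. the spectrum of a henselian local ring, where presheaf sections agree with stalks of the sheafification) the relevant presheaf sections vanish as well. Thus the tower of abelian groups is identically zero in the degrees of interest, both $\lim$ and $\lim^1$ vanish, and $\upi^\pre_{p,q}(\holim_i E_i)$ vanishes on henselian local schemes. Sheafifying, $\upi_{p,q}(\holim_i E_i) = 0$ for $p - q < n$, and since $n$ was arbitrary this holds for all $p, q$, so $\holim_i E_i \cong 0$.

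The main technical obstacle is the bookkeeping in passing from the homotopy limit in $\SH(S)$ to a computation with actual abelian groups: one must justify that $\holim$ over the tower commutes with $\Sigma^{j,j}$, with the functor to $\SH^\se_s(S)$, and with evaluation on $\Sigma^\infty_+ Y$ for $Y$ the spectrum of a henselian local ring, and then invoke the Milnor $\lim^1$ sequence at the level of presheaf homotopy groups before sheafifying. Lemma \ref{hterge} and the lemmas preceding it provide exactly the compatibility needed, so this is routine once the reduction is set up; the only real content is the observation that membership in $\SH(S)_{h \ge n}$ forces the entire tower of homotopy-group-presheaves to be zero on henselian local schemes in the range $p - q < n$, which makes both $\lim$ and $\lim^1$ trivially vanish.
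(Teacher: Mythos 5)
There is a genuine gap at the step you yourself identify as ``the only real content.'' The Milnor $\lim^1$-sequence computes $\upi^\pre_{p,q}(\holim_i E_i)(Y)$ only for $Y$ an actual object of the site (e.g.\ $Y \in X_\Nis$), and for such $Y$ the groups $\upi^\pre_{p,q}(E_i)(Y)$ are in general \emph{not} zero when $E_i \in \SH(S)_{h \ge n}$ --- only their Nisnevich sheafifications (equivalently, their stalks at henselian local points) vanish. To pass from the Milnor sequence on $Y \in X_\Nis$ to a statement about stalks you must commute the filtered colimit defining the stalk past $\lim_i$ and $\lim^1_i$, and this fails in general: even if $\colim_\alpha A_{i,\alpha}=0$ for every $i$, the groups $\colim_\alpha \lim_i A_{i,\alpha}$ and $\colim_\alpha \lim^1_i A_{i,\alpha}$ need not vanish, because the index $\alpha$ killing a class may depend on $i$. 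So ``sheaves vanish, hence both $\lim$ and $\lim^1$ trivially vanish'' is not justified; note also that your $N$ depends only on $p-q$, with no reference to the dimension of $X$, which is a symptom of the problem.

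The argument can be repaired, but it needs the stronger, dimension-shifted \emph{presheaf}-level vanishing: if $E \in \SH(S)_{h \ge n}$ and $Y \in \Sm_S$ has dimension $\le d$, then $\upi^\pre_{p,q}(E)(Y)=0$ for $p-q<n-d$ (this is the corollary the paper records immediately after the statement; it rests on Nisnevich descent and the bound of cohomological dimension by Krull dimension). With that in hand, for fixed $p,q$ and fixed $X$ of dimension $d$ you choose $N$ so that $E_i \in \SH(S)_{h \ge n}$ for $i\ge N$ with $n > p-q+d+1$; then the towers $(\upi^\pre_{p,q}(E_i)(Y))_i$ and $(\upi^\pre_{p+1,q}(E_i)(Y))_i$ are eventually literally zero, uniformly over $Y \in X_\Nis$, and the Milnor sequence gives the vanishing with no sheafification subtleties. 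The paper sidesteps all of this differently: it passes to $\SH^\se_s(S)$, writes each term as $\holim_k (F_i)_{\le k}$ via Lemma \ref{hterge}, and exchanges the two homotopy limits, so that the inner tower $((F_i)_{\le k})_i$ is eventually the zero object and no $\lim^1$ analysis of nonzero groups is needed.
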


\begin{proof}
Fix $q \in \integers$ and let $F_i$ be the image of $\Sigma^{q,q} E_i$ in $\SH^\se_s(S)$.
We are ready if we show $\holim_{i \to \infty} F_i \cong 0$.
By assumption for every $n \in \naturals$ there is a $N \in \naturals$ such that $F_i \in \SH^\se_s(S)_{\ge n}$ for
each $i \ge N$. By Lemma \ref{hterge} we have $F_i \cong \holim_{k \to \infty} (F_i)_{\le k}$.
Thus $$\holim_{i \to \infty} F_i \cong \holim_i \holim_k (F_i)_{\le k} \cong \holim_k \holim_i (F_i)_{\le k}
\cong \holim_k 0 \cong 0.$$
\end{proof}

We also have the

\begin{corollary}
Let $E \in \SH(S)_{h \ge n}$ and $X \in \Sm_S$ of dimension $d$. Then
$$\upi^\pre_{p,q}(E)(X)=0$$ for $p-q<n-d$.
\end{corollary}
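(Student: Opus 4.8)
The plan is to reduce to the statement of Lemma~\ref{hterge} applied to a shift of $E$ into the category $\SH^\se_s(S)$, exactly as in the proof of Corollary~\ref{dwrzjj}. First I would fix $q \in \integers$ and let $F$ be the image of $\Sigma^{q,q}E$ in $\SH^\se_s(S)$; since $E \in \SH(S)_{h \ge n}$, the construction of $\SH(S)_{h \ge n}$ in terms of the sheaves $\upi_{p,q}$ forces $F \in \SH^\se_s(S)_{\ge n}$, i.e. $\upi_k(F)=0$ for $k<n$. The presheaf $\upi^\pre_{p,q}(E)$ evaluated on $X$ is, after the shift, the presheaf $\upi^\pre_{p-q}(F)$ evaluated on $X$ (up to the reindexing coming from $\Sigma^{q,q}$), so the claim becomes: for $F \in \SH^\se_s(S)_{\ge n}$ and $X \in \Sm_S$ of dimension $d$, one has $\upi^\pre_m(F)(X)=0$ for $m<n-d$.

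Next I would exploit the hypercohomology-style bound on Nisnevich cohomological dimension: on a Noetherian scheme $X$ of Krull dimension $d$ the Nisnevich topos has cohomological dimension $\le d$, so sections over $X$ of an $\se$-spectrum $F$ which is connective in the sense $\upi_k(F)=0$ for $k<n$ are concentrated in degrees $\ge n-d$. Concretely, I would invoke Lemma~\ref{hterge} to write $F \cong \holim_{k \to \infty} F_{\le k}$ and analyze $\upi^\pre_m(F)(X)$ via the Milnor exact sequence together with the descent spectral sequence for the Nisnevich sheaves $\upi_j(F)$ on $X$, whose $E_2$-page is $H^p_\Nis(X,\upi_j(F))$ with $p \le d$ and $j \ge n$, hence contributes only to $\upi^\pre_{j-p}(F)(X)$ with $j-p \ge n-d$. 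This gives vanishing for $m<n-d$ and also shows that the lim$^1$ term in the Milnor sequence vanishes in that range, so no subtlety there.

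Translating back, $\upi^\pre_{p,q}(E)(X)=0$ whenever the corresponding stable-homotopy degree $p-q$ is $<n-d$, which is precisely the assertion. I expect the main obstacle to be the bookkeeping connecting $\upi^\pre_{p,q}(E)(X)$ to $\upi^\pre_{p-q}(F)(X)$ under the functor $\SH^\se_s(S) \to \SH(S)$ and the $\Sigma^{q,q}$-twist, together with making precise that the Nisnevich-cohomological-dimension bound $H^p_\Nis(X,-)=0$ for $p>d$ applies to the (possibly non-quasi-coherent, but still Nisnevich) sheaves $\upi_j(F)$ on an essentially smooth $X$; once that is in place the spectral-sequence argument is routine. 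An alternative, perhaps cleaner, route avoiding spectral sequences is to induct on $d$ using a Nisnevich distinguished square / localization sequence that drops the dimension by one, but the cohomological-dimension argument via Lemma~\ref{hterge} is the most direct and is the one I would write up.
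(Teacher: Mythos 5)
Your argument is correct and is essentially the one the paper intends: the corollary is stated without proof as an immediate consequence of the method of Lemma \ref{hterge}, namely passing to $\SH^\se_s(S)$ via the $\Sigma^{q,q}$-shift and running up the Postnikov tower, where each layer contributes $H^{j-m}_\Nis(X,\upi_j(F))$ with $j\ge n$ and this vanishes for $j-m>d$ by the bound of the Nisnevich cohomological dimension by the Krull dimension. The reindexing you flag is routine, and the $\lim^1$-term vanishes by the same stabilization argument already carried out in the proof of Lemma \ref{hterge}.
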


\begin{proposition}
\label{gr455}
Let $$\cdots \to E_{i+1} \to E_i \to E_{i-1} \to \cdots \to E_1 \to E_0$$
be an inverse system of objects in $\SH(S)_{h \ge n}$.
Suppose for each $p,q \in \integers$ and $d \in \naturals$ there is an $N \in \naturals$
such that for $X \in \Sm_S$ of dimension $d$ the map
$$\upi^\pre_{p,q}(E_{i+1})(X) \to \upi^\pre_{p,q}(E_i)(X)$$
is an isomorphism for all $i \ge N$. Then $\holim_{i \to \infty} E_i \in \SH(S)_{h \ge n}$.
(Here the latter homotopy limit is computed in $\SH(S)$.)
\end{proposition}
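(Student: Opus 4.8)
The plan is to verify the defining condition of $\SH(S)_{h \ge n}$ on homotopy sheaves, passing to Nisnevich stalks so that the hypothesis — which is formulated on presheaves evaluated on schemes of bounded dimension — becomes applicable. By definition we must show $\upi_{p,q}(\holim_i E_i) = 0$ whenever $p - q < n$. A Nisnevich sheaf on $\Sm_S$ vanishes if and only if all of its stalks vanish, and for every $E \in \SH(S)$ and every essentially smooth henselian local $S$-scheme $Y$ one has $\upi^\pre_{p,q}(E)(Y) = \upi_{p,q}(E)(Y)$ (Nisnevich descent for $E$ together with the vanishing of the higher Nisnevich cohomology of $Y$; this is the fact already used in the proof of Proposition \ref{gegrsy}). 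Hence it suffices to prove $\upi^\pre_{p,q}(\holim_i E_i)(Y) = 0$ whenever $p - q < n$, for $Y := \Spec \caO^h_{X,x}$ with $X \in \Sm_S$ and $x \in X$ arbitrary.

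Fix such $X, x, p, q$ and put $D := \dim X$. Writing $Y = \lim_\alpha Y_\alpha$ as the cofiltered limit of the affine \'etale neighborhoods $(Y_\alpha, y_\alpha) \to (X, x)$, each $Y_\alpha$ lies in $\Sm_S$ and, being \'etale over $X$, has $\dim Y_\alpha \le D$. By continuity of the motivic stable homotopy category under cofiltered limits of schemes with affine transition morphisms, $\upi^\pre_{p,q}(E)(Y) = \colim_\alpha \upi^\pre_{p,q}(E)(Y_\alpha)$ for every $E \in \SH(S)$; this will be applied to $E = \holim_i E_i$, to the $E_i$, and to an $E_N$ chosen as follows. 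Applying the hypothesis to the pairs $(p,q)$ and $(p+1,q)$ and to the finitely many dimensions $d \le D$, and letting $N$ be the maximum of the resulting finitely many bounds, we obtain one $N$ such that for every index $\alpha$ and every $i \ge N$ the transition maps of the two towers $\{\upi^\pre_{p,q}(E_i)(Y_\alpha)\}_i$ and $\{\upi^\pre_{p+1,q}(E_i)(Y_\alpha)\}_i$ are isomorphisms.

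On each $Y_\alpha$ I would use that $\map_{\SH(S)}(-, \holim_i E_i) \simeq \holim_i \map_{\SH(S)}(-, E_i)$ and invoke the Milnor exact sequence of this tower of mapping spectra,
$$0 \to {\lim}^1_i \upi^\pre_{p+1,q}(E_i)(Y_\alpha) \to \upi^\pre_{p,q}(\holim_i E_i)(Y_\alpha) \to \lim_i \upi^\pre_{p,q}(E_i)(Y_\alpha) \to 0 .$$
Both towers being eventually constant (from $i = N$ on), the $\lim^1$-term vanishes and $\lim_i \upi^\pre_{p,q}(E_i)(Y_\alpha) = \upi^\pre_{p,q}(E_N)(Y_\alpha)$, so the sequence yields a natural isomorphism $\upi^\pre_{p,q}(\holim_i E_i)(Y_\alpha) \cong \upi^\pre_{p,q}(E_N)(Y_\alpha)$ with $N$ independent of $\alpha$. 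Passing to $\colim_\alpha$ and using continuity once more,
$$\upi^\pre_{p,q}(\holim_i E_i)(Y) \cong \upi^\pre_{p,q}(E_N)(Y) = \upi_{p,q}(E_N)(Y) = 0 ,$$
the last equality because $E_N \in \SH(S)_{h \ge n}$ and $p - q < n$. As $X$ and $x$ were arbitrary, $\holim_i E_i \in \SH(S)_{h \ge n}$.

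The step I expect to require the most care is this passage between two levels: the hypothesis concerns presheaves evaluated on smooth schemes of a fixed dimension, whereas the conclusion is the vanishing of the homotopy \emph{sheaves} of $\holim_i E_i$. That is exactly what forces the reduction to the Nisnevich stalks $\Spec \caO^h_{X,x}$ (where $\upi^\pre$ and $\upi$ agree) and the continuity argument writing each such stalk as a cofiltered limit of smooth affines of dimension $\le \dim X$ — which is what allows the bound $N$ coming from the hypothesis to be chosen uniformly in $\alpha$, and hence to survive the colimit. The remaining ingredients — $\map(-,-)$ commuting with homotopy limits and the Milnor exact sequence of a tower — are purely formal.
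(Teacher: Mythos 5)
Your proof is correct and follows essentially the same route as the paper's: the Milnor exact sequence for the tower, vanishing of $\lim^1$ via the stabilization hypothesis applied to $(p,q)$ and $(p+1,q)$ on schemes of dimension $\le \dim X$, and the resulting identification of $\upi_{p,q}(\holim_i E_i)$ with $\upi_{p,q}(E_N)$ locally. The only cosmetic difference is that you descend to henselian stalks via continuity, whereas the paper evaluates on $Y \in X_\Nis$ and sheafifies at the end.
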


\begin{proof}
Let $p,q \in \integers$, $d \in \naturals$ and $X \in \Sm_S$ of dimension $d$.
Choose $N \in \naturals$ such that for any $Y \in \Sm_S$ of dimension $\le d$
the map
$$\upi^\pre_{p,q}(E_{i+1})(Y) \to \upi^\pre_{p,q}(E_i)(Y)$$
is an isomorphism for all $i \ge N$.
We claim that $$\upi_{p,q}(\holim_k E_k)|_{X_\Nis} \cong \upi_{p,q}(E_i)|_{X_\Nis}$$
for all $i \ge N$. For every $Y \in X_\Nis$ we have the Milnor short exact sequence
$$0 \to \text{$\lim_i$}^1 \upi^\pre_{p+1,q}(E_i)(Y) \to \upi^\pre_{p,q}(\holim_i E_i)(Y) \to \lim_i \upi^\pre_{p,q}(E_i)(Y) \to 0.$$
The $\lim^1$-term vanishes because the inverse system of abelian groups stabilizes by assumption.
Sheafifying we see that $\upi_{p,q}(\holim_k E_k)|_{X_\Nis} \cong \upi_{p,q}(E_i)|_{X_\Nis}$ for $i \ge N$,
in particular $\upi_{p,q}(\holim_k E_k)|_{X_\Nis} = 0$ in case $p-q<n$. Since this is true for all $X \in \Sm_S$
we conclude $\upi_{p,q}(\holim_k E_k) =0$ for $p-q < n$.
\end{proof}

\section{Connectivity of algebraic cobordism}

\begin{lemma}
\label{vftjt}
Let $X$ be a smooth scheme over a Dedekind domain of mixed characteristic or over a field.
Then for any abelian group $A$ we have $\MA_X \in \SH(X)_{h \ge 0}$.
\end{lemma}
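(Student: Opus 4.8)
The plan is to reduce the statement to a concrete computation of the homotopy sheaves $\upi_{p,q}(\MA_X)$ and to show they vanish for $p - q < 0$, i.e.\ in negative ``Milnor--Witt degree''. Since $\MA_X$ is the $\GmX$-spectrum whose $r$-th level is $\caM(r) \otimes A\,[r]$, and since the homotopy sheaves $\upi_{p,q}$ are computed as a colimit over the levels of the $\se$-spectrum homotopy sheaves, the first step is to unwind what $\upi_{p,q}(\MA_X)$ is in terms of the motivic complexes: concretely, $\upi_{p,q}(\MA_X)(U)$ for $U \in \Sm_X$ henselian local is a colimit of motivic cohomology groups $H^{*,*}(\GmX^{\wedge j} \wedge U_+, A)$ with a shift, and using the projective bundle / $\mathbb{G}_m$-suspension behavior of motivic cohomology one reduces to understanding $H^{2q' - (p' ), q'}(U, A)$ for $U$ essentially smooth henselian local over the Dedekind base, with $p' - q'$ tracking $p - q$.

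The heart of the matter is then the vanishing statement: for $U$ the henselization of a local ring of a smooth scheme over a Dedekind domain of mixed characteristic (or over a field), the motivic cohomology $H^{n,i}(U, A)$ vanishes when $n > i$ — in fact one needs the sharper ``$H^{n,i}(U,A) = 0$ for $n > i$ on henselian local essentially smooth $U$'', which is the Gersten/Beilinson--Soulé-type vanishing available in the Dedekind setting from \cite{spitzweck.em}. I would invoke the known structural facts recorded in the Preliminaries: $\caM(r) \cong 0$ for $r < 0$, $\caM(0) = S^0\underline{\integers}$, and more generally the fact that $\caM(r)$ is concentrated in cohomological degrees $\le r$ on smooth schemes, together with the further collapse to degrees $\le \min(r, \dim)$-ish behavior on henselian local rings. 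Tensoring with $A$ is harmless since these are complexes of sheaves and $A$ is flat or at worst contributes $\Tor$ in the same degree range. Combining the level-wise computation with this vanishing gives $\upi_{p,q}(\MA_X) = 0$ for $p - q < 0$, which is precisely $\MA_X \in \SH(X)_{h \ge 0}$.

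I would organize the argument so that it also covers the case over a field uniformly: there the required motivic cohomology vanishing is classical (Nisnevich-local motivic cohomology of a henselian local ring is concentrated in degrees $\le$ weight), so the only genuinely new content is the mixed-characteristic case, where one cites the corresponding properties of $\MZ_S$ from \cite{spitzweck.em}. A clean way to phrase the reduction is: it suffices to check $\upi^\pre_{p,q}(\MA_X)(Y) = 0$ for $p - q < 0$ and $Y$ the spectrum of a henselian local ring of a scheme in $\Sm_X$, because homotopy sheaves are detected on such $Y$; and for such $Y$ one has $\upi^\pre_{p,q}(\MA_X)(Y) = \colim_r H^{2(q+r) - (p+r)}(\caM(q+r) \otimes A)(Y \times \GmX^{\wedge r})$ (indices arranged so the cohomological degree minus the weight equals $q - p < 0$), and each term vanishes by the degree bound.

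The main obstacle I anticipate is making the indexing in the colimit over $\GmX$-levels precise and checking that the ``cohomological degree exceeds the weight'' condition is genuinely preserved at every level — the $\GmX$-smashing shifts both the cohomological degree and the weight, and one must verify these shifts cancel in the relevant difference so that the hypothesis $p - q < 0$ really does feed a vanishing statement at each stage rather than just the bottom one. A secondary technical point is the passage from the presheaf statement on henselian local $Y$ to the sheaf statement $\upi_{p,q} = 0$, which is standard (Nisnevich stalks) but should be stated, and the handling of $A$-coefficients, which I expect to be routine since the coefficient change commutes with everything in sight and does not increase cohomological amplitude.
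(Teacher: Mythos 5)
Your proposal is correct and follows essentially the same route as the paper: the entire content of the lemma is the vanishing of the cohomology sheaves of the motivic complexes $\caM(r)$ in degrees $>r$ (which the paper cites from Geisser, \cite[Corollary 4.4]{geisser.dede}), evaluated on henselian local schemes where hypercohomology collapses to sections; the index bookkeeping and the $A$-coefficient step you worry about work out exactly as you describe. The paper's proof is just this one-line citation, so your more detailed unwinding is a faithful expansion of it rather than a different argument.
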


\begin{proof}
This follows from the fact that the motivic complexes $\caM(r)$ have vanishing
$i$-th cohomology sheaf for $i>r$, see \cite[Corollary 4.4]{geisser.dede}.
\end{proof}

\begin{proposition}
\label{fhtrth}
Let $S$ be the spetrum of a discrete valuation ring of mixed characteristic, $j \colon \eta \to S$
the inclusion of the generic point. Then for any abelian group $A$ we have
$j_* \MA_\eta \in \SH(S)_{h \ge 0}$.
\end{proposition}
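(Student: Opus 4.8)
The plan is to reduce the statement to a computation of the stalks of the homotopy sheaves of $j_* \MA_\eta$ at henselian local rings of smooth $S$-schemes, exactly as in the proof of Proposition \ref{gegrsy}. Let $Y$ be the spectrum of the henselization of a local ring of a scheme in $\Sm_S$. Since $j_*$ commutes with homotopy colimits we have $\upi^\pre_{p,q}(j_* \MA_\eta)(Y) \cong \upi^\pre_{p,q}(\MA_\eta)(Y_\eta)$, where $Y_\eta := Y \times_S \eta$. So the content of the proposition is that $\MA_\eta$, evaluated on the generic fibre $Y_\eta$ of such a $Y$, has vanishing homotopy presheaves in bidegrees with $p-q<0$. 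If $Y_\eta$ were itself (essentially) smooth over the field $k(\eta)$ this would be immediate from Lemma \ref{vftjt} together with the stalkwise description of membership in $\SH(-)_{h\ge 0}$. The subtlety is that $Y_\eta$ need not be smooth over $\eta$ in the naive sense: $Y$ is henselian local of (relative) dimension possibly positive, its closed point lies over the closed point $s$ of $S$, and the generic fibre $Y_\eta$ is an open subscheme of the smooth $S$-scheme $\operatorname{Spec}$ of the henselization, not a local scheme.

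First I would pin down the geometry of $Y_\eta$. Writing $Y = \operatorname{Spec} \mathcal{O}_{X,x}^h$ for $X \in \Sm_S$, $Y_\eta = \operatorname{Spec}(\mathcal{O}_{X,x}^h \otimes_{\mathcal{O}_S} k(\eta))$ is a localization of the smooth $k(\eta)$-scheme $X_\eta$. Its dimension is at most $\dim Y$, and it is a \emph{semilocal} (in fact, since $\mathcal O_{X,x}^h$ is henselian and has a unique prime above the generic point of $S$ lying over a given prime of $X_\eta$, essentially local) regular scheme, smooth over $k(\eta)$ after localization. The key point is that $Y_\eta$ is a filtered limit of smooth affine $k(\eta)$-schemes along étale (indeed open) transition maps; hence it is an essentially smooth $k(\eta)$-scheme. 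Therefore $\MA_{Y_\eta}$ is the restriction of $\MA_{k(\eta)}$, and by Lemma \ref{vftjt} (applied over the field $k(\eta)$) together with continuity of $\SH$ and of the homotopy presheaves along such limits, $\upi^\pre_{p,q}(\MA_\eta)(Y_\eta)=0$ for $p-q<0$. Combined with the base-change isomorphism above, this gives $\upi^\pre_{p,q}(j_*\MA_\eta)(Y)=0$ for $p-q<0$ on all henselian local $Y$, and since such $Y$ compute the stalks of the Nisnevich sheaves $\upi_{p,q}$, we conclude $j_*\MA_\eta \in \SH(S)_{h\ge 0}$.

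The main obstacle, and the step that needs care, is the claim that $Y_\eta$ is essentially smooth over $k(\eta)$ and that the homotopy presheaves are continuous along the relevant pro-system — in particular, verifying that $\mathcal O_{X,x}^h \otimes_{\mathcal O_S} k(\eta)$ is a filtered colimit of smooth $k(\eta)$-algebras with étale, or even localization, transition maps, so that one may legitimately invoke Lemma \ref{vftjt}. One must also be slightly careful that $\MA_\eta$ here means the object over the field $\eta$ constructed in \cite{spitzweck.em}, which is compatible with the construction over $S$ under pullback along $j$, so that writing $j_*\MA_\eta$ and evaluating via base change to $\eta$ is legitimate; this is where one uses that the motivic Eilenberg--MacLane spectrum commutes with the pullback $j^*$. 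Everything else is the same stalkwise bookkeeping as in Proposition \ref{gegrsy}.
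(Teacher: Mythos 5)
Your reduction of the statement to showing $\upi^\pre_{p,q}(\MA_\eta)(Y_\eta)=0$ for $p-q<0$, where $Y=\Spec(\caO^h_{X,x})$ is a henselian local pro-smooth scheme over $S$ and $Y_\eta=Y\times_S\eta$, is fine; it is the same stalk computation as in Proposition \ref{gegrsy}. The gap is in the step where you deduce this vanishing from Lemma \ref{vftjt} together with essential smoothness of $Y_\eta$ over $k(\eta)$ and continuity. Membership of $\MA_{k(\eta)}$ in $\SH(k(\eta))_{h\ge 0}$ is a statement about Nisnevich \emph{sheaves}, i.e.\ about sections over henselian \emph{local} pro-smooth schemes. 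But when $x$ lies over the closed point of $S$, the generic fibre $Y_\eta=\Spec(\caO^h_{X,x}[1/\pi])$ is not local: it is a regular scheme of positive dimension with, in general, infinitely many closed points (for $\caO^h_{X,x}$ the henselization of $V[t]$ at $(\pi,t)$, every prime such as $(t-\pi)$ gives a closed point of $Y_\eta$). Essential smoothness and continuity only give $\upi^\pre_{p,q}(\MA_\eta)(Y_\eta)\cong\colim_\alpha \upi^\pre_{p,q}(\MA_{k(\eta)})(V_\alpha)$ over smooth $k(\eta)$-schemes $V_\alpha$ of dimension $d=\dim Y_\eta$, and the vanishing of the homotopy sheaves of $\MA_{k(\eta)}$ then yields only $\upi^\pre_{p,q}(\MA_\eta)(Y_\eta)=0$ for $p-q<-d$ (this is exactly the Corollary preceding Proposition \ref{gr455}); it does not give the range $p-q<0$ that you need. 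In concrete terms, $\upi^\pre_{p,q}(\MA_{k(\eta)})(V_\alpha)=H^{-p,-q}(V_\alpha,A)$, and groups such as $H^{w+1,w}$ of a smooth affine curve need not vanish, so nothing forces the colimit to vanish. You have identified the wrong obstacle: the issue is not whether $Y_\eta$ is essentially smooth (it is), but that it is not local.

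The missing input is precisely what the paper uses: the localization triangle $i_!i^!\MA_S\to\MA_S\to j_*\MA_\eta$ together with the purity isomorphism $i^!\MA_S\cong\MA_s(-1)[-2]$ of \cite[Theorem 7.4]{spitzweck.em}. At the level of your stalk this is the long exact sequence relating $H^{m,w}(Y_\eta,A)$ to $H^{m,w}(Y,A)$, which vanishes for $m>w$ since $Y$ is henselian local over the Dedekind domain (Lemma \ref{vftjt}), and to $H^{m-1,w-1}(Y_s,A)$ for the special fibre $Y_s$, which vanishes for $m-1>w-1$, again by Lemma \ref{vftjt} over the residue field; the codimension shift coming from purity is exactly what closes the gap of $d$ in your estimate. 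Without some form of this purity/localization argument the asserted vanishing over the non-local scheme $Y_\eta$ does not follow.
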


\begin{proof}
Let $i \colon s \to S$ be the inclusion of the closed point.
We have an exact triangle
$$i_!i^! \MA_S \to \MA_S \to j_* \MA_\eta \to i_!i^! \MA_S[1]$$
and an isomorphism $i^! \MA_S \cong \MA_s(-1)[-2] \in \SH(s)_{h \ge -1}$, see \cite[Theorem 7.4]{spitzweck.em}.
We conclude with Proposition \ref{gegrsy} and Lemma \ref{vftjt}.
\end{proof}

\begin{lemma}
\label{gfewet}
Let the situation be as in Proposition \ref{fhtrth}.
Then $$j_*\MGL_\eta\left<0,n\right> \wedge M_A \in \SH(S)_{h \ge 0}$$
for all $n \ge 0$.
\end{lemma}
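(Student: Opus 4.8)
The strategy is to induct on $n$ using the slice tower of $\MGL_\eta$, pushed forward via $j_*$ and smashed with $M_A$. The base case $n=0$ asks that $j_*(\MGL_\eta\langle 0,0\rangle)\wedge M_A = j_*(s_0\MGL_\eta)\wedge M_A$ lie in $\SH(S)_{h\ge 0}$. By Theorem \ref{dhu5t3} (applied over $\eta$, with $R=\integers$ since all residue characteristics of $\eta$ are zero, or with the given $A$ in place of $R$ via the $R$-coefficient version), $s_0\MGL_\eta \wedge M_A \cong \MA_\eta$, so $j_*(s_0\MGL_\eta)\wedge M_A \cong j_*\MA_\eta$, and this lies in $\SH(S)_{h\ge 0}$ by Proposition \ref{fhtrth} (using that $j_*$ commutes with $-\wedge M_A$, as $M_A$ is a cellular object pulled back from the base, or more simply since $j_*$ commutes with homotopy colimits and $M_A$ is built from the sphere). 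For the inductive step, use the exact triangle relating consecutive stages of the tower,
$$s_n(\MGL_\eta)\wedge M_A \to \MGL_\eta\langle 0,n\rangle \wedge M_A \to \MGL_\eta\langle 0,n-1\rangle \wedge M_A \to s_n(\MGL_\eta)\wedge M_A[1],$$
obtained by smashing the defining triangle $f_{n+1}f_0(\MGL_\eta)\to f_0(\MGL_\eta)\to \MGL_\eta\langle 0,n\rangle$ with $M_A$ and comparing with the analogous one for $n-1$; concretely $\MGL_\eta\langle 0,n\rangle = l_{n+1}f_0(\MGL_\eta)$ and the cofiber of $\MGL_\eta\langle 0,n\rangle\to\MGL_\eta\langle 0,n-1\rangle$ is $s_n(\MGL_\eta)$. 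Apply $j_*$ to this triangle. By the inductive hypothesis the middle-after-$j_*$ and third-after-$j_*$ terms for index $n-1$ are in $\SH(S)_{h\ge 0}$; since $\SH(S)_{h\ge 0}$ is closed under extensions (and under the shift involved, which preserves connectivity), it suffices to show $j_*(s_n(\MGL_\eta)\wedge M_A) \in \SH(S)_{h\ge 0}$.

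By Theorem \ref{dhu5t3}, $s_n(\MGL_\eta)\wedge M_A \cong \Sigma^{2n,n}\MA_\eta\otimes L_n$, and $\Sigma^{2n,n} = \Sigma^{n,n}\circ(\text{shift by }n)$ preserves the homotopy $t$-structure connectivity: $\Sigma^{2n,n}$ shifts $p-q$ by $2n-n=n\ge 0$, so $\Sigma^{2n,n}\MA_\eta\otimes L_n$, being a sum of copies of $\Sigma^{2n,n}\MA_\eta$ indexed by a basis of the free abelian group $L_n$, lies in $\SH(\eta)_{h\ge n}\subseteq\SH(\eta)_{h\ge 0}$. Pushing forward, $j_*(\Sigma^{2n,n}\MA_\eta\otimes L_n) \cong \Sigma^{2n,n} j_*(\MA_\eta)\otimes L_n$, and since $j_*(\MA_\eta)\in\SH(S)_{h\ge 0}$ by Proposition \ref{fhtrth} and $\SH(S)_{h\ge 0}$ is closed under arbitrary sums and under $\Sigma^{2n,n}$ (for $n\ge 0$), we conclude $j_*(s_n(\MGL_\eta)\wedge M_A)\in\SH(S)_{h\ge 0}$, completing the induction.

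The main point to get right is the compatibility of $j_*$ with the slice functors and with smashing by $M_A$: the slices $s_n$ and the truncations $\langle 0,n\rangle$ are computed in $\SH(\eta)$, and one needs $j_*$ applied to the resulting triangle to still be a triangle (automatic, $j_*$ being exact) with terms one controls. The only real content is that $j_*$ commutes with $-\wedge M_A$ and preserves the relevant sums; the former holds because $M_A$ is a filtered homotopy colimit of finite wedges of sphere spectra (or, with $\integers$-coefficients, because one can instead work with $\MGL_\eta$ integrally and invoke the integral $s_n\MGL_\eta\cong\Sigma^{2n,n}\MZ_\eta\otimes L_n$ of Theorem \ref{dhu5t3}), and $j_*$ commutes with homotopy colimits; the latter is the same fact. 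I do not expect the degree bookkeeping for $\Sigma^{2n,n}$ to be an obstacle — it strictly improves connectivity — so the induction closes cleanly.
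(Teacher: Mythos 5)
Your proof is correct and follows essentially the same route as the paper's (induction on $n$ via the triangles $s_n\MGL_\eta \to \MGL_\eta\left<0,n\right> \to \MGL_\eta\left<0,n-1\right>$, the identification $s_n\MGL_\eta \cong \Sigma^{2n,n}\MZ_\eta\otimes L_n$ over the characteristic-zero point $\eta$, Proposition \ref{fhtrth}, and closure of $\SH(S)_{h\ge 0}$ under extensions); the paper merely states this in one line, while you correctly supply the implicit details about $j_*$ commuting with $\Sigma^{2n,n}$, sums, and $-\wedge M_A$.
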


\begin{proof}
We can assume $A=\integers$. Since $\eta$ is of characteristic $0$
we have $s_n \MGL_\eta \cong \Sigma^{2n,n} \MZ \otimes L_n$. Moreover we have exact triangles
$$s_n \MGL_\eta \to \MGL_\eta\left<0,n\right> \to \MGL_\eta \left<0,n-1 \right> \to s_n \MGL_\eta [1].$$
Applying $j_*$ to these triangles and using Proposition \ref{fhtrth} one concludes by induction on $n$.
\end{proof}

\begin{lemma}
\label{rergr}
Let the situation be as in Proposition \ref{fhtrth}.
Let $p,q \in \integers$ and $X \in \Sm_S$ of dimension $d$.
Then $$\upi^\pre_{p,q}(j_* \MGL_\eta\left< 0,n+1 \right>)(X) \to \upi^\pre_{p,q}(j_* \MGL_\eta\left< 0,n \right>)(X)$$
is an isomorphism for $n \ge p-q+d$.
\end{lemma}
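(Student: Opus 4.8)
The plan is to compare the two objects $j_* \MGL_\eta\langle 0,n+1\rangle$ and $j_* \MGL_\eta\langle 0,n\rangle$ through the exact triangle built from the $(n+1)$-st slice, exactly as in the proof of Lemma \ref{gfewet}. Recall from the construction of the $\langle\cdot,\cdot\rangle$ objects that there is an exact triangle
$$s_{n+1}\MGL_\eta \to \MGL_\eta\langle 0,n+1\rangle \to \MGL_\eta\langle 0,n\rangle \to s_{n+1}\MGL_\eta[1].$$
Applying the exact functor $j_*$ and then the (co)homological functor $\upi^\pre_{p,q}(-)(X)$ yields a long exact sequence
$$\cdots \to \upi^\pre_{p,q}(j_* s_{n+1}\MGL_\eta)(X) \to \upi^\pre_{p,q}(j_*\MGL_\eta\langle 0,n+1\rangle)(X) \to \upi^\pre_{p,q}(j_*\MGL_\eta\langle 0,n\rangle)(X) \to \upi^\pre_{p,q}(j_* s_{n+1}\MGL_\eta[1])(X) \to \cdots$$
So it suffices to show that $\upi^\pre_{p,q}(j_* s_{n+1}\MGL_\eta)(X) = 0$ and $\upi^\pre_{p-1,q}(j_* s_{n+1}\MGL_\eta)(X) = 0$ — i.e. that $j_* s_{n+1}\MGL_\eta$ has vanishing $\upi^\pre_{p',q}(-)(X)$ for $p' = p$ and $p' = p-1$ — under the hypothesis $n \ge p-q+d$, equivalently $n+1 \ge p-q+d+1 > p-q+d$.

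First I would identify the slice: since $\eta$ has characteristic $0$, Theorem \ref{dhu5t3} (or the results it cites) gives $s_{n+1}\MGL_\eta \cong \Sigma^{2(n+1),n+1}\MZ_\eta \otimes L_{n+1}$, a direct sum of copies of $\Sigma^{2(n+1),n+1}\MZ_\eta$ indexed by a basis of the free abelian group $L_{n+1}$. Next, by Lemma \ref{vftjt} we have $\MZ_\eta \in \SH(\eta)_{h\ge 0}$, hence $\Sigma^{2(n+1),n+1}\MZ_\eta \in \SH(\eta)_{h\ge n+1}$ (shifting by $\Sigma^{2(n+1),n+1}$ raises homotopy-sheaf connectivity by $2(n+1)-(n+1) = n+1$), and the same for the direct sum since $\SH(\eta)_{h\ge n+1}$ is closed under homotopy colimits. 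Now apply Proposition \ref{gegrsy} to the closed inclusion... wait, $j$ is the open inclusion of the generic point, so instead I invoke Proposition \ref{fhtrth}-style reasoning: actually the cleanest route is the Corollary following Corollary \ref{dwrzjj}, which says that an object $E \in \SH(S)_{h\ge m}$ has $\upi^\pre_{p',q}(E)(X) = 0$ for $p'-q < m-d$. So once I know $j_* s_{n+1}\MGL_\eta \in \SH(S)_{h\ge n+1}$, I get vanishing of $\upi^\pre_{p',q}(-)(X)$ whenever $p'-q < n+1-d$, i.e. $p' < q + n + 1 - d$; and the hypothesis $n \ge p-q+d$ gives $p \le q+n-d < q+n+1-d$ and also $p-1 < p \le q+n-d < q+n+1-d$, so both required vanishings hold.

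The one genuine point to pin down is that $j_*$ preserves the relevant connectivity, i.e. $j_*(\SH(\eta)_{h\ge m}) \subset \SH(S)_{h\ge m}$ for the open immersion $j$ of the generic point of a mixed-characteristic DVR. This is exactly the mechanism behind Proposition \ref{fhtrth}: there one uses the localization triangle $i_!i^!\MA_S \to \MA_S \to j_*\MA_\eta \to i_!i^!\MA_S[1]$ together with the purity isomorphism $i^!(-) \cong (-)(-1)[-2]$ on the closed point — but that argument was tailored to $\MA$. For a general $E \in \SH(\eta)_{h\ge m}$ I would instead argue directly, as in Proposition \ref{gegrsy}: test $\upi^\pre_{p',q}(j_*E)$ on $Y$ the henselization of a local ring of a scheme in $\Sm_S$; then $j_*$ commutes with the homotopy colimit defining this, and $\upi^\pre_{p',q}(j_*E)(Y) \cong \upi^\pre_{p',q}(E)(Y\times_S \eta)$, and $Y\times_S\eta$ is the henselization of a local ring of a scheme in $\Sm_\eta$ (it is either empty or a localization of the henselian local scheme $Y$ at the generic fibre), so this group vanishes for $p'-q<m$ because $E \in \SH(\eta)_{h\ge m}$. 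That gives $j_* s_{n+1}\MGL_\eta \in \SH(S)_{h\ge n+1}$, and the lemma follows. The main obstacle is precisely this last commutation/connectivity claim for $j_*$ on a general connective object — once that is in hand, everything else is the slice identification plus bookkeeping with the numerical bound $n \ge p-q+d$.
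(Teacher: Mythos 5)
Your reduction to the vanishing of $\upi^\pre_{p,q}$ and $\upi^\pre_{p-1,q}$ of $j_*s_{n+1}\MGL_\eta$ on $X$ via the triangle
$s_{n+1}\MGL_\eta \to \MGL_\eta\left<0,n+1\right> \to \MGL_\eta\left<0,n\right> \to s_{n+1}\MGL_\eta[1]$
is exactly the paper's starting point, and the numerical bookkeeping is right. But the way you establish the vanishing has a genuine gap. The paper does not prove (and does not need) that $j_*$ carries $\SH(\eta)_{h\ge m}$ into $\SH(S)_{h\ge m}$; it simply uses the adjunction $\upi^\pre_{p,q}(j_*s_{n+1}\MGL_\eta)(X)\cong H^{2(n+1)-p,\,n+1-q}_{\mathrm{mot}}(X_\eta,L_{n+1})$ and the standard vanishing of motivic cohomology over a field in cohomological degree exceeding weight plus dimension, which holds precisely when $n\ge p-q+d$. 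That is the whole proof.

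Your substitute argument for ``$j_*$ preserves $h$-connectivity'' does not work. The mechanism of Proposition \ref{gegrsy} is special to closed immersions: the base change of a henselian local scheme along a closed immersion is again henselian local. For the open immersion $j$, the scheme $Y\times_S\eta$ is $Y$ with its special fibre removed (e.g.\ $\Spec$ of $\integers_p[t]^h_{(p,t)}[1/p]$), which is in general not local, let alone a henselization of a local ring of a smooth $\eta$-scheme. Consequently membership of $E$ in $\SH(\eta)_{h\ge m}$ --- which only controls the Nisnevich \emph{sheaves} $\upi_{p',q}(E)$, i.e.\ the values on henselian local schemes --- gives you no vanishing of the \emph{presheaf} sections $\upi^\pre_{p',q}(E)(Y_\eta)$ over such a non-local $Y_\eta$; the best you get from the corollary after Corollary \ref{dwrzjj} is a loss of connectivity equal to $\dim Y_\eta$, which is unbounded as $Y$ ranges over henselizations of local rings of smooth $S$-schemes. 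Indeed, if $j_*$ preserved $h$-connectivity this easily, Lemmas \ref{gfewet}--\ref{eweggg} of the paper would be superfluous. The irony is that the special case you actually need \emph{is} available by the route you dismiss: $s_{n+1}\MGL_\eta$ is a Tate twist of a sum of copies of $\MZ_\eta$, so Proposition \ref{fhtrth} together with the projection formula ($j_*$ commutes with $\Sigma^{2(n+1),n+1}$) gives $j_*s_{n+1}\MGL_\eta\in\SH(S)_{h\ge n+1}$, after which the corollary following Corollary \ref{dwrzjj} closes the argument. Either repair --- the direct adjunction computation or the appeal to Proposition \ref{fhtrth} --- makes the proof correct.
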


\begin{proof}
Consider the exact triangle
$$j_* s_{n+1} \MGL_\eta \to j_* \MGL_\eta \left< 0,n+1 \right> \to j_* \MGL_\eta \left< 0,n \right> \to s_{n+1} \MGL_\eta [1].$$
We have $$\upi^\pre_{p,q}(j_* s_{n+1} \MGL_\eta)(X) = H_{\mathrm{mot}}^{2(n+1)-p,n+1-q}(X_\eta, L_{n+1}).$$
The latter group vanishes for $2(n+1)-p> n+1 -q+d$, showing the claim.
\end{proof}

\begin{lemma}
\label{eweggg}
Let the situation be as in Proposition \ref{fhtrth}.
Then $j_* \MGL_\eta \in \SH(S)_{\h \ge 0}$.
\end{lemma}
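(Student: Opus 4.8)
The plan is to realize $j_*\MGL_\eta$ as the homotopy limit of the tower $\bigl(j_*\MGL_\eta\left<0,n\right>\bigr)_{n\ge 0}$ and then feed this tower into Proposition \ref{gr455}. First I would recall that $\MGL_\eta$ is effective, so $f_0\MGL_\eta\cong\MGL_\eta$, and that over the characteristic-zero field $\eta$ the spectrum $\MGL_\eta$ is complete with respect to the slice filtration, i.e.\ $\holim_n f_{n+1}\MGL_\eta\cong 0$ (part of the Hopkins--Morel package over fields of characteristic zero, cf.\ \cite{hoyois.hopkins-morel}, \cite{spitzweck.rel}). Passing to the homotopy limit over $n$ in the defining triangles $f_{n+1}\MGL_\eta\to\MGL_\eta\to\MGL_\eta\left<0,n\right>\to f_{n+1}\MGL_\eta[1]$ then gives $\MGL_\eta\cong\holim_n\MGL_\eta\left<0,n\right>$. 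Since $j_*$ is a right adjoint it commutes with homotopy limits, whence $j_*\MGL_\eta\cong\holim_n j_*\MGL_\eta\left<0,n\right>$.

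Next I would verify that the inverse system $\bigl(j_*\MGL_\eta\left<0,n\right>\bigr)_n$ satisfies the hypotheses of Proposition \ref{gr455}. Lemma \ref{gfewet}, applied with $A=\integers$ so that $M_A$ is the sphere spectrum, shows that every term lies in $\SH(S)_{h\ge 0}$. And Lemma \ref{rergr} shows that for fixed $p,q\in\integers$ and fixed $d\in\naturals$, the transition map $\upi^\pre_{p,q}\bigl(j_*\MGL_\eta\left<0,n+1\right>\bigr)(X)\to\upi^\pre_{p,q}\bigl(j_*\MGL_\eta\left<0,n\right>\bigr)(X)$ is an isomorphism for all $X\in\Sm_S$ of dimension $d$ once $n\ge p-q+d$; taking $N=\max(p-q+d,0)$ this is exactly the eventual-stabilization condition required. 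Proposition \ref{gr455} then yields $\holim_n j_*\MGL_\eta\left<0,n\right>\in\SH(S)_{h\ge 0}$, and combining with the identification of the previous paragraph we conclude $j_*\MGL_\eta\in\SH(S)_{h\ge 0}$.

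The only genuinely external ingredient is the slice completeness of $\MGL_\eta$ over the characteristic-zero field $\eta$; everything else is bookkeeping with the slice tower and the already-established Lemmas \ref{gfewet} and \ref{rergr}. The step I would be most careful about is the identification $j_*\MGL_\eta\cong\holim_n j_*\MGL_\eta\left<0,n\right>$, which requires both that $\MGL_\eta\cong\holim_n\MGL_\eta\left<0,n\right>$ (effectivity together with slice completeness) and that $j_*$ preserves this particular homotopy limit (right adjointness). Once that is in hand, Proposition \ref{gr455} does the rest, and there is no substantial remaining obstacle.
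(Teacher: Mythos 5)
Your proposal is correct and follows the same route as the paper: identify $j_*\MGL_\eta$ with $\holim_n j_*\MGL_\eta\left<0,n\right>$ using slice completeness of $\MGL_\eta$ over the characteristic-zero point $\eta$ and the fact that $j_*$ preserves homotopy limits, then apply Lemmas \ref{gfewet} and \ref{rergr} to verify the hypotheses of Proposition \ref{gr455}. The only difference is that you spell out the completeness step in more detail, where the paper simply cites the relevant results of Hoyois.
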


\begin{proof}
Consider the inverse system
$$\cdots \to j_* \MGL_\eta \left<0,n+1 \right> \to j_* \MGL_\eta \left< 0,n \right> \to \cdots \to j_* s_0 \MGL_\eta$$
in $\SH(S)$.
Since $j_*$ preserves homotopy limits the homotopy limit over this system is $j_* \MGL_\eta$,
using \cite[Corollary 2.4 and Lemma 8.10 or Theorem 8.12]{hoyois.hopkins-morel}.
By Lemma \ref{gfewet} every object of this system is in $\SH(S)_{h \ge 0}$.
Moreover by Lemma \ref{rergr} the assumptions of Proposition \ref{gr455} are satisfied.
Thus this Proposition implies the claim.
\end{proof}

\begin{proposition}
\label{greerg}
Let the situation be as in Proposition \ref{fhtrth} and let $i \colon s \to S$ be the inclusion
of the closed point. Then $i^! \MGL_S \in \SH(s)_{\ge -1}$.
\end{proposition}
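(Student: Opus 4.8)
The plan is to localize and use the known connectivity statement for $\MGL_\eta$. Write $j \colon \eta \to S$ for the generic point. By the gluing (localization) triangle for the closed inclusion $i$ and the open inclusion $j$, we have an exact triangle
$$i_*i^! \MGL_S \to \MGL_S \to j_*j^* \MGL_S \to i_*i^! \MGL_S[1]$$
in $\SH(S)$. Since $\MGL$ is stable under the base change $j^*$, we have $j^* \MGL_S \cong \MGL_\eta$, so the right-hand term is $j_* \MGL_\eta$. Rotating the triangle, $i_*i^! \MGL_S$ is the fiber of $\MGL_S \to j_* \MGL_\eta$.

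First I would record that $\MGL_S \in \SH(S)_{h \ge 0}$. This should follow from the slice tower for $\MGL_S$ over the Dedekind base together with the effectivity and connectivity of the slices $\Sigma^{2n,n}\MZ_S \otimes L_n$ (which are in $\SH(S)_{h\ge 0}$ by Lemma \ref{vftjt}), using the completeness/convergence of the slice tower and Proposition \ref{gr455} exactly as in the proof of Lemma \ref{eweggg} but over $S$ rather than via $j_*$ — alternatively one invokes that $\MGL_S$ is a colimit of shifts of suspension spectra of smooth $S$-schemes of the appropriate connectivity. Next, by Lemma \ref{eweggg} we have $j_* \MGL_\eta \in \SH(S)_{h \ge 0}$. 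Since $\SH(S)_{h \ge 0}$ is closed under extensions and cofibers (Lemma preceding Proposition \ref{gegrsy}), the long exact sequence of homotopy sheaves associated to the triangle above, applied with the two outer terms in $\SH(S)_{h\ge 0}$, forces $i_*i^!\MGL_S$ into $\SH(S)_{h\ge -1}$: indeed $\upi_{p,q}$ of the fiber of a map between objects of $\SH(S)_{h\ge 0}$ can be nonzero only for $p - q \ge -1$.

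It then remains to pass from $i_*i^!\MGL_S \in \SH(S)_{h\ge -1}$ to $i^!\MGL_S \in \SH(s)_{\ge -1}$. The functor $i_*$ reflects the connectivity statements we need: by Proposition \ref{gegrsy} and its proof, for $Y$ the henselization of a local ring of a smooth $S$-scheme one has $\upi^\pre_{p,q}(i_* F)(Y) \cong \upi^\pre_{p,q}(F)(Y\times_S s)$, and every henselian local smooth $s$-scheme arises this way (smooth schemes over the closed point lift, after henselization, to smooth schemes over $S$), so $i_* F \in \SH(S)_{h\ge -1}$ is equivalent to $F \in \SH(s)_{h \ge -1}$. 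Finally, over the field $s$ we have $\SH(s)_{h\ge -1} = \SH(s)_{\ge -1}$ by the Lemma identifying the homotopy-sheaf connectivity with the homotopy $t$-structure connectivity over a field (Hoyois, \cite[Theorem 2.3]{hoyois.hopkins-morel}). This gives $i^!\MGL_S \in \SH(s)_{\ge -1}$.

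The main obstacle is the very first step — establishing $\MGL_S \in \SH(S)_{h\ge 0}$, equivalently the connectivity of $\MGL_S$ over the mixed-characteristic Dedekind base — since over the generic point this uses characteristic-zero slice technology, while over $S$ one must feed in the Hopkins-Morel computation of the slices of $\MGL_S$ from \cite{spitzweck.em} and argue convergence of the slice tower carefully; this is presumably the content that Proposition \ref{gdrttt} of the introduction is meant to supply, so I would cite it at that point. The remaining steps are formal consequences of the six-functor localization triangle and the elementary behavior of the homotopy $t$-structure under the closed and open immersions.
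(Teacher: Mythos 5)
Your skeleton (the localization triangle for $i$ and $j$, plus Lemma \ref{eweggg} for the term $j_*\MGL_\eta$) is the right one, but the first step of your argument is both circular and quantitatively off, and this is a genuine gap. You want to "record that $\MGL_S \in \SH(S)_{h\ge 0}$" via convergence of the slice tower over $S$, or by citing Proposition \ref{gdrttt}. Neither is available here: the convergence of the slice tower over $S$ (Corollary \ref{gfrerz}) is deduced from Proposition \ref{gdrttt}, which is deduced from Lemma \ref{gferghd}, which is deduced from the very Proposition \ref{greerg} you are proving — so citing any of these is circular. Moreover Proposition \ref{gdrttt} only asserts $\MGL_S \in \SH(S)_{h\ge -1}$, which would degrade your conclusion to $i^!\MGL_S \in \SH(s)_{\ge -2}$; the stronger statement $\MGL_S \in \SH(S)_{h\ge 0}$ is never proved integrally in the paper. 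Your fallback — that $\MGL_S$ is a colimit of shifted suspension spectra of smooth $S$-schemes — only places $\MGL_S$ in the \emph{generated} subcategory $\SH(S)_{\ge 0}$, not in $\SH(S)_{h\ge 0}$: over a non-field base the inclusion $\SH(S)_{h\ge n}\subset \SH(S)_{\ge n}$ is not known to be an equality (Morel's stable connectivity theorem fails in this generality), so suspension spectra of smooth $S$-schemes need not have vanishing negative homotopy sheaves.

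The paper sidesteps all of this by never asking for $\MGL_S\in\SH(S)_{h\ge 0}$. It uses only the easy fact $\MGL_S\in\SH(S)_{\ge 0}$, notes $j_*\MGL_\eta\in\SH(S)_{h\ge 0}\subset\SH(S)_{\ge 0}$ by Lemma \ref{eweggg}, and then applies $i^*$ to the triangle $i_!i^!\MGL_S\to\MGL_S\to j_*\MGL_\eta$. Since $i^*i_!\cong\id$ and $i^*$ sends $\SH(S)_{\ge 0}$ to $\SH(s)_{\ge 0}$ (it preserves colimits and generators), one gets $i^!\MGL_S$ as the fiber of a map between objects of $\SH(s)_{\ge 0}$, hence in $\SH(s)_{\ge -1}$. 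This also makes your last step — arguing that $i_*$ \emph{reflects} connectivity by lifting henselian local smooth $s$-schemes to $S$ — unnecessary; that lifting claim is itself not justified in your write-up and is the kind of thing one should avoid having to prove. I recommend reorganizing your proof along these lines.
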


\begin{proof}
Note first that $i^*$ sends $\SH(S)_{\ge 0}$ to $\SH(s)_{\ge 0}$.
We have $\MGL \in \SH(S)_{\ge 0}$ and by Lemma \ref{eweggg} also $j_* \MGL_\eta \in \SH(S)_{h \ge 0}
\subset \SH(S)_{\ge 0}$.
Applying $i^*$ to the exact triangle
$$i_!i^! \MGL_S \to \MGL_S \to j_* \MGL_\eta \to i_!i^! \MGL_S [1]$$
shows the claim.
\end{proof}

\begin{lemma}
\label{gferghd}
Let $S$ be the spectrum of a discrete valuation ring of mixed characteristic.
Then $\MGL_S \in \SH(S)_{h \ge -1}$.
\end{lemma}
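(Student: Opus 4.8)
The plan is to reduce the statement about $\MGL_S$ over the DVR $S$ to the connectivity information we have already assembled about $j_*\MGL_\eta$ on the generic fiber and about $i^!\MGL_S$ on the closed point. Concretely, I would invoke the localization triangle
$$i_!i^!\MGL_S \to \MGL_S \to j_*\MGL_\eta \to i_!i^!\MGL_S[1]$$
and read off the homotopy sheaves of the middle term from the long exact sequence associated to the functors $\upi_{p,q}$. We already know from Lemma \ref{eweggg} that $j_*\MGL_\eta \in \SH(S)_{h\ge 0}$, which in particular kills $\upi_{p,q}(j_*\MGL_\eta)$ for $p-q<0$, a fortiori for $p-q<-1$. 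So the only possible contribution to $\upi_{p,q}(\MGL_S)$ in the range $p-q<-1$ comes from $i_!i^!\MGL_S$.

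The next step is to control $i_!i^!\MGL_S$. By Proposition \ref{greerg} we have $i^!\MGL_S \in \SH(s)_{\ge -1}$, and since $s$ is the spectrum of a field, the lemma identifying $\SH(s)_{h\ge n}$ with $\SH(s)_{\ge n}$ gives $i^!\MGL_S \in \SH(s)_{h\ge -1}$. Then Proposition \ref{gegrsy} applied to the closed immersion $i$ yields $i_*i^!\MGL_S \in \SH(S)_{h\ge -1}$; one has to note that $i_! = i_*$ for a closed immersion, so $i_!i^!\MGL_S \in \SH(S)_{h\ge -1}$ as well. Hence $\upi_{p,q}(i_!i^!\MGL_S) = 0$ for $p-q<-1$.

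Putting these together: for $p-q<-1$ the long exact sequence of homotopy sheaves attached to the localization triangle reads (in the relevant range) with both $\upi_{p,q}(j_*\MGL_\eta)$ and $\upi_{p,q}(i_!i^!\MGL_S)$ vanishing, and likewise the neighboring term $\upi_{p,q}(i_!i^!\MGL_S[1]) \cong \upi_{p+1,q}(i_!i^!\MGL_S)$ — but here $(p+1)-q$ could equal $-1$, so this term need not vanish; what matters is the term on the other side, $\upi_{p+1,q}(j_*\MGL_\eta)[?]$, no — more carefully, $\upi_{p,q}(\MGL_S)$ sits between $\upi_{p,q}(i_!i^!\MGL_S)$ and $\upi_{p,q}(j_*\MGL_\eta)$, both zero for $p-q<-1$, forcing $\upi_{p,q}(\MGL_S)=0$ there. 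This is exactly the assertion $\MGL_S \in \SH(S)_{h\ge -1}$.

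The main obstacle I anticipate is purely bookkeeping rather than conceptual: making sure the index ranges line up correctly in the long exact sequence so that no stray term with $p-q=-1$ leaks into the range $p-q<-1$, and confirming the identification $i_!=i_*$ together with the compatibility of $i^!$ with the homotopy $t$-structure on the field $s$. All the genuinely hard work — the slice computation on $\eta$, the stabilization estimate of Lemma \ref{rergr}, the homotopy-limit argument of Proposition \ref{gr455}, and the purity isomorphism $i^!\MA_S \cong \MA_s(-1)[-2]$ — has already been carried out in the preceding lemmas, so this final step should be short.
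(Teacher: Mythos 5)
Your proposal is correct and follows exactly the paper's argument: the localization triangle $i_!i^!\MGL_S \to \MGL_S \to j_*\MGL_\eta$ combined with Lemma \ref{eweggg}, Proposition \ref{greerg} (plus the identification $\SH(s)_{h\ge -1}=\SH(s)_{\ge -1}$ over the residue field, which the paper leaves implicit), and Proposition \ref{gegrsy}. The index bookkeeping you worry about is harmless, since closure of $\SH(S)_{h\ge -1}$ under extensions already gives the conclusion.
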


\begin{proof}
Let the notation be as above.
The claim follows from the exact triangle
$$i_!i^! \MGL_S \to \MGL_S \to j_* \MGL_\eta \to i_!i^! \MGL_S [1],$$
Lemma \ref{eweggg}, Proposition \ref{greerg} and Proposition \ref{gegrsy}.
\end{proof}

\begin{proposition}
\label{gdrttt}
Let $S$ be the spectrum of a Dedekind domain of mixed characteristic.
Then $\MGL_S \in \SH(S)_{h \ge -1}$.
\end{proposition}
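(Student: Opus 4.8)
The plan is to reduce the statement about a general Dedekind domain to the discrete valuation ring case already settled in Lemma \ref{gferghd}, by working locally for the Nisnevich topology. The homotopy sheaves $\upi_{p,q}(\MGL_S)$ are, by definition, Nisnevich sheaves on $\Sm_S$, and a Nisnevich sheaf vanishes if and only if all its stalks vanish. So it suffices to check that $\upi^\pre_{p,q}(\MGL_S)(Y) = 0$ for $p-q < -1$, where $Y$ ranges over spectra of henselizations of local rings of schemes smooth over $S$.

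First I would observe that for such a $Y$, the image of $Y$ under the structure map $Y \to S$ lies in a single point of $S$, and after localizing $S$ at that point we may as well assume $S$ is the spectrum of a discrete valuation ring $\caO_{S,v}$ (or a field, if the point is closed in a one-dimensional situation — but for the generic point the residue characteristic is zero and for closed points we localize to get a DVR of mixed characteristic; the essentially-smooth-over-a-field case is covered by Lemma \ref{vftjt}-type inputs and the known field results). The key point is that $\MGL$ is stable under base change along $S' \to S$ in a way compatible with homotopy sheaves on henselian local schemes: if $Y$ is the henselization of a local ring of $X \in \Sm_S$ at a point lying over $v \in S$, then $Y$ is also (essentially) a henselian local scheme smooth over $\Spec \caO_{S,v}$, and $\upi^\pre_{p,q}(\MGL_S)(Y) \cong \upi^\pre_{p,q}(\MGL_{\Spec \caO_{S,v}})(Y)$ because $\MGL$ commutes with the relevant (essentially smooth, or pro-smooth) base change and because $\Hom$-groups into $\MGL$ out of $\Sigma^{p,q}\Sigma^\infty_+ Y$ are computed the same way.

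Granting this, Lemma \ref{gferghd} applied over $\Spec \caO_{S,v}$ gives $\MGL_{\Spec\caO_{S,v}} \in \SH(\Spec\caO_{S,v})_{h \ge -1}$, hence the stalk $\upi^\pre_{p,q}(\MGL_{\Spec\caO_{S,v}})(Y)$ vanishes for $p - q < -1$ by the Corollary following Corollary \ref{dwrzjj} (vanishing of $\upi^\pre_{p,q}$ on smooth schemes of bounded dimension), or directly from the definition of $\SH(-)_{h\ge n}$ evaluated on henselian local schemes where presheaf and sheaf homotopy groups agree. Since this holds for every point $v$ of $S$, all stalks of $\upi_{p,q}(\MGL_S)$ vanish for $p - q < -1$, so $\upi_{p,q}(\MGL_S) = 0$ there, which is exactly the assertion $\MGL_S \in \SH(S)_{h \ge -1}$.

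The main obstacle I anticipate is the base-change compatibility step: verifying carefully that passing from $S$ to the local scheme $\Spec\caO_{S,v}$, and then evaluating homotopy-sheaf presheaves on henselian local schemes $Y$ smooth over it, really is insensitive to which base one uses. This requires knowing that $\MGL$ is a "stable" or "Cartesian" section in the sense that its pullback along $\Spec\caO_{S,v} \to S$ is $\MGL_{\Spec\caO_{S,v}}$ (true by construction of $\MGL$), plus a continuity/limit argument identifying $\Hom_{\SH(\Spec\caO_{S,v})}(\Sigma^{p,q}\Sigma^\infty_+ Y, \MGL)$ with the corresponding $\Hom$ over $S$ — essentially the same kind of argument used in the proof of Proposition \ref{gegrsy}, where $i_*$ commuting with homotopy colimits was exploited. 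Everything else is bookkeeping with the already-established one-dimensional case.
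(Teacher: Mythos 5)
Your argument is correct and is essentially the paper's own proof: the paper likewise reduces to Lemma \ref{gferghd} by observing that the henselization of a local ring of a scheme in $\Sm_S$ lies over a local ring of $S$, i.e.\ over a discrete valuation ring (or the characteristic-zero generic point), and then checks vanishing of the homotopy sheaves on these stalks. You have merely made explicit the continuity/base-change step that the paper leaves implicit.
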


\begin{proof}
The henselization of a local ring of a scheme in $\Sm_S$ lies over a local ring
of $S$, thus the claim follows from Lemma \ref{gferghd}.
\end{proof}

Compare the following result to \cite[Conjecture 15]{voevodsky.open}.

\begin{corollary}
\label{gfrerz}
Let $S$ be the spectrum of a Dedekind domain of mixed characteristic.
Let $R$ be a localization of $\integers$ such that $\Phi_S \wedge M_R$
is an isomorphism. Then for any $R$-module $A$ we have
$$f_n \MGL_S \wedge M_A \cong \holim_{k \to \infty} \MGL_S \left< n,k \right> \wedge M_A.$$
\end{corollary}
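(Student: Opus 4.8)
The plan is to identify $f_n \MGL_S \wedge M_A$ with the homotopy limit of the tower $\MGL_S\langle n,k\rangle \wedge M_A$ by comparing the natural map between them, and to show its cofiber vanishes using the connectivity result of Proposition \ref{gdrttt} together with the completeness machinery of Corollary \ref{dwrzjj}. First I would recall the exact triangles
$$f_{k+1}(\MGL_S \wedge M_A) \to f_n(\MGL_S \wedge M_A) \to \MGL_S\langle n,k\rangle \wedge M_A \to f_{k+1}(\MGL_S \wedge M_A)[1]$$
(noting that smashing with $M_A$ commutes with the slice functors $f_i$, since these commute with homotopy colimits and $M_A$ is a homotopy colimit of spheres, and that $f_n$ applied to $\MGL_S \wedge M_A$ agrees with $(f_n \MGL_S) \wedge M_A$ because $A$ is an $R$-module and $\Phi_S \wedge M_R$ is an isomorphism, so the slices have the form given by Theorem \ref{dhu5t3}). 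Taking the homotopy limit over $k$ of this tower, the constant term $f_n(\MGL_S \wedge M_A)$ is unaffected, so it suffices to show $\holim_{k\to\infty} f_{k+1}(\MGL_S \wedge M_A) \cong 0$.

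To get this vanishing I would invoke Corollary \ref{dwrzjj}: it is enough to check that for each $m \in \naturals$ there is an $N$ with $f_{k+1}(\MGL_S \wedge M_A) \in \SH(S)_{h \ge m}$ for all $k \ge N$. The idea is that $f_{k+1}$ increases not only the slice-connectivity but also the homotopy-sheaf connectivity by roughly $k$. Concretely, $f_{k+1}(\MGL_S \wedge M_A)$ is built out of $\Sigma^{2j,j}\MA_S \otimes L_j$ for $j \ge k+1$ via the slice tower (using the computation of the slices from Theorem \ref{dhu5t3}, valid after our hypothesis on $R$), and each such slice lies in $\SH(S)_{h \ge j-1}$ by Lemma \ref{vftjt} (since $\MA_S \in \SH(S)_{h\ge 0}$ and the shift $\Sigma^{2j,j}$ shifts homotopy-sheaf connectivity by $2j - j = j$, while the extra $[-1]$ in the triangle costs one). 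Since $\SH(S)_{h\ge m}$ is closed under homotopy colimits and extensions (Lemma stated after Proposition \ref{gegrsy}), an inductive assembly of $f_{k+1}/f_{k+M}$ from slices $s_j$ with $k+1 \le j$, plus completeness of $\MGL_S \wedge M_A$ with respect to the slice filtration (which is where $\MGL_S \in \SH(S)_{h \ge -1}$ from Proposition \ref{gdrttt} feeds in, via the Milnor-sequence/Proposition \ref{gr455} argument already used for $j_*\MGL_\eta$), shows $f_{k+1}(\MGL_S \wedge M_A) \in \SH(S)_{h \ge k}$. Thus taking $N = m$ works.

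The main obstacle I anticipate is the interchange of $\holim_k$ with the slice tower and the precise bookkeeping in the assembly argument: one must know that $\MGL_S \wedge M_A$ is itself slice-complete, i.e. $\MGL_S \wedge M_A \cong \holim_n f_n(\MGL_S \wedge M_A)$ with the connectivity of the layers improving, so that $f_{k+1}$ really is approximated by finite stages of the slice tower in a way compatible with $\SH(S)_{h\ge \bullet}$. This is exactly the content that Proposition \ref{gdrttt} was designed to supply — it upgrades the effectivity of $\MGL_S$ to a uniform statement on homotopy sheaves — and combined with Proposition \ref{gr455} (applied to the tower of the $f_k$'s) and Corollary \ref{dwrzjj} it closes the argument. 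Everything else is a routine manipulation of exact triangles.
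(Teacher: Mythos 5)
Your reduction to showing $\holim_{k\to\infty} f_{k+1}(\MGL_S \wedge M_A)\cong 0$, and the plan to get this from Corollary \ref{dwrzjj} by establishing that $f_{k+1}(\MGL_S\wedge M_A)$ becomes arbitrarily connective in the $\SH(S)_{h\ge\bullet}$ sense, is exactly right and matches the paper. The problem is in how you establish that connectivity: your argument is circular. You propose to build $f_{k+1}(\MGL_S\wedge M_A)$ out of its slices $\Sigma^{2j,j}\MA_S\otimes L_j$, $j\ge k+1$, ``via the slice tower,'' and you explicitly invoke ``completeness of $\MGL_S\wedge M_A$ with respect to the slice filtration'' as an input. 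But $f_{k+1}E$ is only recoverable from the tower of the truncations $E\langle k+1,m\rangle$ (equivalently, is only an iterated extension/limit of slices) if $E$ is slice-complete --- and slice-completeness of $\MGL_S\wedge M_A$ is precisely the content of Corollary \ref{gfrerz} (take $n$ arbitrary; the statement is equivalent to $\holim_k f_k\cong 0$). Proposition \ref{gdrttt} does not ``supply'' this completeness by itself, and Proposition \ref{gr455} only tells you that $\holim_m \MGL_S\langle k+1,m\rangle\wedge M_A$ is highly connective; it says nothing about the comparison map from $f_{k+1}(\MGL_S\wedge M_A)$ to that limit, which is the whole point.

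The missing idea is a description of $f_k\MGL_S$ that does not pass through the slices at all: by the proof of \cite[Theorem 4.7]{spitzweck.rel}, $f_k\MGL_S\wedge M_A$ is a homotopy colimit of objects of the form $\Sigma^{2i,i}\MGL_S\wedge M_A$ with $i\ge k$. Each such object lies in $\SH(S)_{h\ge i-1}$ by Proposition \ref{gdrttt} (the shift $\Sigma^{2i,i}$ raises $p-q$ by $i$), and $\SH(S)_{h\ge k-1}$ is closed under homotopy colimits and extensions, so $f_k\MGL_S\wedge M_A\in\SH(S)_{h\ge k-1}$ with no appeal to the slice tower. Corollary \ref{dwrzjj} then finishes the argument as you intended. (A minor further point: $\Sigma^{2j,j}\MA_S\otimes L_j$ lies in $\SH(S)_{h\ge j}$, not just $h\ge j-1$, by Lemma \ref{vftjt}; but this bookkeeping is moot once the circularity is repaired as above.)
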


\begin{proof}
Under the assumption we have $\f_k \MGL_S \wedge M_A \in \SH(S)_{h \ge k-1}$, since this is a homotopy
colimit of objects of the form $\Sigma^{2i,i} \MGL_S \wedge M_A$ with $i \ge k$, see the proof
of \cite[Theorem 4.7]{spitzweck.rel}, using Proposition \ref{gdrttt}. Thus by Corollary \ref{dwrzjj} we have
$\holim_{k \to \infty} f_k \MGL_S \wedge M_A \cong 0$ implying the claim.
\end{proof}

\begin{remark}
A similar result holds for motivic Landweber spectra using the same argument as in the proof
of \cite[Lemma 8.11]{hoyois.hopkins-morel}. For example $\KGL_S \wedge M_A$ is complete with respect to
the slice filtration.
\end{remark}

\section{The geometric part of algebraic cobordism}

\begin{lemma}
\label{jewet45}
Let $S$ be the spectrum of a Dedekind domain of mixed characteristic.
Let $R$ be a localization of $\integers$ such that $\Phi_S \wedge M_R$
is an isomorphism. Let $p,q \in \integers$ and $X \in \Sm_S$.
Then for any $R$-module $A$ the inverse system of abelian groups
$(\upi^\pre_{p,q}(\MGL_S \left< 0,k \right> \wedge M_A)(X))_k$ eventually becomes
constant for $k \to \infty$.
\end{lemma}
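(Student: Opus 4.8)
The plan is to reduce to the case where $A = R$ and then exploit the computation of the slices of $\MGL_S \wedge M_R$ together with the vanishing of high-degree motivic cohomology on schemes of bounded dimension. First I would set $d = \dim X$ and observe that it suffices to bound, uniformly in $k$, the "layers" $s_k \MGL_S \wedge M_A$, which by the second isomorphism of Theorem \ref{dhu5t3} are $\Sigma^{2k,k} \MA_S \otimes L_k$. Homing $\Sigma^{p,q}\Sigma^\infty_+ X$ into the exact triangle
$$s_{k+1}\MGL_S \wedge M_A \to \MGL_S\left<0,k+1\right>\wedge M_A \to \MGL_S\left<0,k\right>\wedge M_A \to (s_{k+1}\MGL_S \wedge M_A)[1]$$
shows that $\upi^\pre_{p,q}(\MGL_S\left<0,k+1\right>\wedge M_A)(X) \to \upi^\pre_{p,q}(\MGL_S\left<0,k\right>\wedge M_A)(X)$ is an isomorphism once both $\upi^\pre_{p,q}$ and $\upi^\pre_{p+1,q}$ of $s_{k+1}\MGL_S\wedge M_A$ vanish on $X$. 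Since $\upi^\pre_{p,q}(\Sigma^{2(k+1),k+1}\MA_S)(X) = H^{2(k+1)-p,\,k+1-q}_{\mathrm{mot}}(X, A)$, the relevant group is motivic cohomology of $X$ in cohomological degree $2(k+1)-p$ and weight $k+1-q$.

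The key input is then that for $X$ smooth over a Dedekind domain of mixed characteristic, the motivic complex $\caM(r)$ has vanishing cohomology sheaves above degree $r$ (Lemma \ref{vftjt}, i.e. \cite[Corollary 4.4]{geisser.dede}), so on a scheme of dimension $d$ the group $H^{a,r}_{\mathrm{mot}}(X,A)$ vanishes for $a > r + d$ (using the descent spectral sequence / the fact that Nisnevich cohomological dimension of $X$ is $\le d$). Applying this with $a = 2(k+1)-p$, $r = k+1-q$: the cohomology vanishes as soon as $2(k+1)-p > (k+1-q)+d$, that is $k+1 > p - q + d$. Likewise the shifted term $\upi^\pre_{p+1,q}$ contributes $H^{2(k+1)-p-1,k+1-q}_{\mathrm{mot}}(X,A)$, which vanishes under the same inequality up to a harmless shift. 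Hence for all $k \ge p - q + d$ the transition maps are isomorphisms, and the inverse system is eventually constant, with the bound depending only on $p$, $q$ and $\dim X$.

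The reduction to $A = R$ is the one point needing a little care: a general $R$-module $A$ is a filtered colimit of finitely generated ones, and one has a (co)fiber sequence relating $\MA_S$ to $\MR_S$-modules, so the vanishing range for $\MA_S$ follows from that for $\MR_S$ after possibly enlarging the bound by a fixed constant (two-term resolutions of $A$ over $R$, since $R$ is a localization of $\integers$ and hence of global dimension $\le 1$). I expect this bookkeeping with coefficients to be the only mildly delicate point; the heart of the argument — bounding the weight of the nonvanishing slices against the dimension of $X$ via Geisser's vanishing — is immediate once Theorem \ref{dhu5t3} is in hand.
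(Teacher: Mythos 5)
Your argument is correct and is essentially the paper's own proof: the paper likewise homs $\Sigma^{p,q}\Sigma^\infty_+X$ into the triangle $s_k\MGL_S\wedge M_A \to \MGL_S\left<0,k\right>\wedge M_A \to \MGL_S\left<0,k-1\right>\wedge M_A$, identifies $s_k\MGL_S\wedge M_A$ with $\Sigma^{2k,k}\MA\otimes L_k$, and deduces stabilization from the vanishing of $\upi^\pre_{p,q}(\Sigma^{2k+j,k}\MA)(X)$ for $k$ large, which is exactly your Geisser-plus-cohomological-dimension bound. Two harmless quibbles: the second group controlling surjectivity is $\upi^\pre_{p-1,q}$ of the slice (i.e.\ maps into $F[1]$), not $\upi^\pre_{p+1,q}$ --- it vanishes in an even larger range, so your bound survives --- and the reduction to $A=R$ is unnecessary, since the degree bound on the cohomology sheaves of $\caM(r)\otimes^{L} A$ holds for arbitrary $A$ directly.
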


\begin{proof}
This follows from the exact triangle
$$s_k \MGL_S \wedge M_A \to \MGL_S \left<0,k \right> \wedge M_A \to \MGL_S \left< 0,k-1 \right> \wedge M_A \to s_k \MGL_S \wedge M_A [1]$$
and $s_k \MGL_S \wedge M_A \cong \Sigma^{2k,k} \MA \otimes L_k$
since $\upi^\pre_{p,q}(\Sigma^{2k+j,k} \MA)(X)=0$, $j \ge 0$, for $k$ big enough.
\end{proof}

\begin{corollary}
\label{dsgtjt}
Let $S$ be the spectrum of a Dedekind domain of mixed characteristic.
Let $R$ be a localization of $\integers$ such that $\Phi_S \wedge M_R$
is an isomorphism. Let $p,q \in \integers$ and $X \in \Sm_S$.
Then for any $R$-module $A$
the canonical map $$\upi^\pre_{p,q}(\MGL_S \wedge M_A)(X) \to \lim_k \upi^\pre_{p,q}(\MGL_S \left<0,k \right> \wedge M_A)(X)$$
is an isomorphism.
\end{corollary}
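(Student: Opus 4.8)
The plan is to combine the completeness statement from Corollary \ref{gfrerz} with the stabilization result from Lemma \ref{jewet45}. First I would observe that by Corollary \ref{gfrerz}, applied with $n=0$, we have
$$f_0 \MGL_S \wedge M_A \cong \holim_{k \to \infty} \MGL_S \left<0,k\right> \wedge M_A.$$
Since $\MGL_S$ is effective, $f_0 \MGL_S \cong \MGL_S$, so $\MGL_S \wedge M_A \cong \holim_k \MGL_S\left<0,k\right> \wedge M_A$. Applying the presheaf $\upi^\pre_{p,q}(-)(X)$ to this homotopy limit gives a Milnor short exact sequence
$$0 \to \text{$\lim_k$}^1 \upi^\pre_{p+1,q}(\MGL_S\left<0,k\right> \wedge M_A)(X) \to \upi^\pre_{p,q}(\MGL_S \wedge M_A)(X) \to \lim_k \upi^\pre_{p,q}(\MGL_S\left<0,k\right> \wedge M_A)(X) \to 0.$$

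The key point is then that the $\lim^1$-term vanishes: by Lemma \ref{jewet45}, the inverse system $(\upi^\pre_{p+1,q}(\MGL_S\left<0,k\right> \wedge M_A)(X))_k$ eventually becomes constant, hence is Mittag-Leffler, so its $\lim^1$ is zero. This identifies $\upi^\pre_{p,q}(\MGL_S \wedge M_A)(X)$ with the limit term, which is exactly the claim.

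The main obstacle, such as it is, is making sure the Milnor sequence is legitimately available for the presheaf-valued functors $\upi^\pre_{p,q}(-)(X) = \Hom_{\SH(S)}(\Sigma^{p,q}\Sigma^\infty_+ X, -)$: this is standard, since these are corepresentable functors on $\SH(S)$ and a homotopy limit of a tower of spectra/objects of a stable category sits in the usual $\lim^1$–$\lim$ exact sequence for maps out of any object. One should also double-check that the indexing is compatible — i.e. that the homotopy limit $\holim_k \MGL_S\left<0,k\right>$ appearing in Corollary \ref{gfrerz} is over the same tower $\cdots \to \MGL_S\left<0,k+1\right> \to \MGL_S\left<0,k\right> \to \cdots$ whose transition maps are the ones appearing in Lemma \ref{jewet45}; this is immediate from the definition $E\left<n,k\right> = l_{k+1}(f_n(E))$ and the exact triangles recorded in \S 3. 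Everything else is routine, so the proof is short.
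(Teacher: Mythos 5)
Your argument is exactly the paper's: the stated proof there is just ``Corollary \ref{gfrerz}, the Milnor short exact sequence and Lemma \ref{jewet45},'' and your write-up correctly fills in the details (effectivity of $\MGL_S$ giving $f_0\MGL_S \cong \MGL_S$, and the eventual constancy from Lemma \ref{jewet45} killing the $\lim^1$-term). Nothing to add.
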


\begin{proof}
This follows from Corollary \ref{gfrerz}, the Milnor short exact sequence and Lemma \ref{jewet45}.
\end{proof}

\begin{lemma}
\label{xfhtjt}
Let $S$ be the spectrum of a Dedekind domain of mixed characteristic.
Let $R$ be a localization of $\integers$ such that $\Phi_S \wedge M_R$
is an isomorphism. Let $n \in \integers$. Then for $k \ge n+1$ and any $R$-module $A$ the natural map
$$\pi_{2n,n} \MGL_S \left<n,k+1 \right> \wedge M_A \to \pi_{2n,n} \MGL_S \left<n,k \right> \wedge M_A$$
is an isomorphism.
\end{lemma}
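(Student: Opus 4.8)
The plan is to read the map off the exact triangle that inserts a single slice. Just as in the proofs of Lemmas \ref{gfewet} and \ref{rergr}, there is an exact triangle
$$s_{k+1}\MGL_S \to \MGL_S\left<n,k+1\right> \to \MGL_S\left<n,k\right> \to s_{k+1}\MGL_S[1]$$
whose second arrow induces, after smashing with $M_A$ and applying $\pi_{2n,n}$, the map in the statement. Smashing the whole triangle with $M_A$ and passing to the long exact sequence of homotopy groups in bidegree $(2n,n)$, the map $\pi_{2n,n}(\MGL_S\left<n,k+1\right>\wedge M_A)\to\pi_{2n,n}(\MGL_S\left<n,k\right>\wedge M_A)$ is sandwiched between $\pi_{2n,n}(s_{k+1}\MGL_S\wedge M_A)$ and $\pi_{2n-1,n}(s_{k+1}\MGL_S\wedge M_A)$, so it suffices to show that both of these groups vanish once $k\ge n+1$.

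To that end I would first identify the slice with coefficients. Since $\Phi_S\wedge M_R$ is an isomorphism and $A$ is an $R$-module — so that $\Phi_S\wedge M_A$ is again an isomorphism — the $A$-coefficient form of Theorem \ref{dhu5t3}, exactly as used in the proof of Lemma \ref{jewet45}, gives $s_{k+1}\MGL_S\wedge M_A\cong\Sigma^{2(k+1),k+1}\MA_S\otimes L_{k+1}$. By Lemma \ref{vftjt} we have $\MA_S\in\SH(S)_{h\ge 0}$; the twist $\Sigma^{2(k+1),k+1}$ raises connectivity by $2(k+1)-(k+1)=k+1$, and the tensor with the free abelian group $L_{k+1}$ is a coproduct, hence $s_{k+1}\MGL_S\wedge M_A\in\SH(S)_{h\ge k+1}$.

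Finally I would feed this into the vanishing bound for connective spectra over a one-dimensional base. As $S$ is a Dedekind domain of mixed characteristic it is not a field, so $S\in\Sm_S$ has dimension $d=1$, and the corollary asserting $\upi^\pre_{p,q}(E)(X)=0$ for $E\in\SH(S)_{h\ge m}$ and $X\in\Sm_S$ of dimension $d$ with $p-q<m-d$ yields $\pi_{p,q}(s_{k+1}\MGL_S\wedge M_A)=0$ whenever $p-q<(k+1)-1=k$. For $(p,q)=(2n,n)$ this needs $n<k$, i.e. $k\ge n+1$; for $(p,q)=(2n-1,n)$ it needs only $n-1<k$, which is weaker. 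Hence both flanking groups vanish for $k\ge n+1$ and the map is an isomorphism. I do not expect a real obstacle here — everything is a consequence of the slice computation and connectivity results already established — the one point to treat with care being the numerology, which is sharp: for $k=n$ the left-hand flanking group is $\pi_{2n,n}(s_{n+1}\MGL_S\wedge M_A)\cong H^{2,1}(S,A)\otimes L_{n+1}$, which is built from $\Pic(S)$ and is nonzero in general, so the hypothesis $k\ge n+1$ cannot be weakened.
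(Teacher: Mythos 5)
Your proof is correct and follows the same route as the paper: the long exact sequence of the triangle $s_{k+1}\MGL_S \to \MGL_S\left<n,k+1\right> \to \MGL_S\left<n,k\right>$ smashed with $M_A$, together with the vanishing of the two flanking slice groups for $k\ge n+1$. You merely supply the details of that vanishing (via the identification of the slice and the connectivity/dimension bound) which the paper leaves implicit, and you correctly write the last term as $\pi_{2n-1,n}\,s_{k+1}\MGL_S\wedge M_A$ where the paper has a typo ($s_k$).
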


\begin{proof}
This follows from the exact sequence
$$\pi_{2n,n} s_{k+1} \MGL_S \wedge M_A \to \pi_{2n,n} \MGL_S \left<n,k+1 \right> \wedge M_A \to$$
$$\pi_{2n,n} \MGL_S \left<n,k \right> \wedge M_A \to
\pi_{2n-1,n} s_k \MGL_S \wedge M_A$$
and the fact the the two outer terms are $0$ for $k \ge n+1$.
\end{proof}

\begin{corollary}
\label{sgtjtr}
Let $S$ be the spectrum of a Dedekind domain of mixed characteristic.
Let $R$ be a localization of $\integers$ such that $\Phi_S \wedge M_R$
is an isomorphism.
Then for any $R$-module $A$ the canonical map $$\pi_{2n,n} \MGL_S \wedge M_A \to \pi_{2n,n} \MGL_S \left< n,n+1 \right> \wedge M_A$$ is an isomorphism.
\end{corollary}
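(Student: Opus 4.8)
The plan is to factor the canonical map through the $n$-th effective cover $f_n\MGL_S\wedge M_A$ and then identify both ends with the machinery assembled above. Concretely, I would show (a) that $\pi_{2n,n}$ is unchanged when one replaces $\MGL_S\wedge M_A$ by $f_n\MGL_S\wedge M_A$, and (b) that $\pi_{2n,n}$ of the completed tower $f_n\MGL_S\wedge M_A\simeq\holim_k\MGL_S\left<n,k\right>\wedge M_A$ is already detected at the stage $\left<n,n+1\right>$.

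For (a): since $\MGL_S$ is effective, the cofibre of the canonical map $f_n\MGL_S\wedge M_A\to\MGL_S\wedge M_A$ is $\MGL_S\left<0,n-1\right>\wedge M_A$, which carries a finite filtration with graded pieces the slices $s_j\MGL_S\wedge M_A\cong\Sigma^{2j,j}\MA\otimes L_j$ for $0\le j\le n-1$ (the $A$-linear form of Theorem \ref{dhu5t3}). For such $j$ the group $\pi_{2n,n}\Sigma^{2j,j}\MA=\pi_{2n-2j,n-j}\MA$ is motivic cohomology of $S$ in the weight $j-n<0$, hence vanishes, and likewise $\pi_{2n+1,n}\Sigma^{2j,j}\MA=0$; walking up the finite filtration I get $\pi_{2n,n}(\MGL_S\left<0,n-1\right>\wedge M_A)=\pi_{2n+1,n}(\MGL_S\left<0,n-1\right>\wedge M_A)=0$, so the long exact sequence of the cofibre triangle gives an isomorphism $\pi_{2n,n}(f_n\MGL_S\wedge M_A)\xrightarrow{\,\sim\,}\pi_{2n,n}(\MGL_S\wedge M_A)$.

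For (b): by Corollary \ref{gfrerz} we have $f_n\MGL_S\wedge M_A\simeq\holim_k\MGL_S\left<n,k\right>\wedge M_A$, so the Milnor sequence computes $\pi_{2n,n}(f_n\MGL_S\wedge M_A)$ from $\lim_k\pi_{2n,n}\MGL_S\left<n,k\right>\wedge M_A$ and a $\lim^1$-term built from $(\pi_{2n+1,n}\MGL_S\left<n,k\right>\wedge M_A)_k$. Lemma \ref{xfhtjt} makes the first system constant for $k\ge n+1$, with value $\pi_{2n,n}\MGL_S\left<n,n+1\right>\wedge M_A$; the same slice-vanishing computation, now applied to $\pi_{2n+1,n}$ (using that $\pi_{2n+1,n}s_k\MGL_S\wedge M_A$ and $\pi_{2n,n}s_k\MGL_S\wedge M_A$ are motivic cohomology groups of $S$ that vanish for $k$ large), shows the second system is eventually constant, so the $\lim^1$-term vanishes. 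Hence $\pi_{2n,n}(f_n\MGL_S\wedge M_A)\cong\pi_{2n,n}\MGL_S\left<n,n+1\right>\wedge M_A$, and composing with the isomorphism from (a) — and checking that the composite is the canonical map, the relevant leg $f_n\MGL_S\to\MGL_S\left<n,n+1\right>$ being the localization unit $l_{n+2}$ — finishes the argument. One could equally well bypass $f_n$: applying $l_{k+1}$ to $f_n\MGL_S\to\MGL_S\to\MGL_S\left<0,n-1\right>$ and smashing with $M_A$ gives, for $k\ge n+1$, triangles $\MGL_S\left<n,k\right>\wedge M_A\to\MGL_S\left<0,k\right>\wedge M_A\to\MGL_S\left<0,n-1\right>\wedge M_A$ natural in $k$; by the vanishing of (a) these identify $\lim_k\pi_{2n,n}\MGL_S\left<0,k\right>\wedge M_A$, which is $\pi_{2n,n}\MGL_S\wedge M_A$ by Corollary \ref{dsgtjt}, with $\lim_k\pi_{2n,n}\MGL_S\left<n,k\right>\wedge M_A$, and Lemma \ref{xfhtjt} concludes as before.

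There is no serious obstacle: the substantive input — slice-completeness of $\MGL_S\wedge M_A$, the form of its slices, and the stabilization of the homotopy systems involved — is already contained in Corollaries \ref{gfrerz} and \ref{dsgtjt} and in Lemma \ref{xfhtjt}. The two points requiring mild care are the identification of the cofibre term as $\MGL_S\left<0,n-1\right>\wedge M_A$, which rests on the effectivity of $\MGL_S$ together with the vanishing of negative-weight motivic cohomology over a Dedekind domain that kills its low homotopy, and the bookkeeping that the chain of canonical isomorphisms produced really is the canonical comparison map of the statement.
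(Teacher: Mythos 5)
Your proof is correct and follows essentially the same route as the paper, whose proof is simply the citation of Corollary \ref{dsgtjt} and Lemma \ref{xfhtjt}; your second, ``bypass $f_n$'' variant is literally that argument, and your main variant through $f_n\MGL_S\wedge M_A$ and Corollary \ref{gfrerz} is an equivalent reshuffling. You also make explicit the step the paper leaves implicit, namely that the slices $s_j$ with $j<n$ contribute nothing to $\pi_{2n,n}$ and $\pi_{2n+1,n}$ because negative-weight motivic cohomology vanishes, which is exactly the point needed to pass from $\left<0,k\right>$ to $\left<n,k\right>$.
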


\begin{proof}
This follows from Corollary \ref{dsgtjt} and Lemma \ref{xfhtjt}.
\end{proof}

\begin{theorem}
\label{grfe5z4}
Let $S$ be the spectrum of a Dedekind domain of mixed characteristic.
Let $R$ be a localization of $\integers$ such that $\Phi_S \wedge M_R$
is an isomorphism.
Then for every $n \in \integers$ and $R$-module $A$ there is a canonical isomorphism
$$\pi_{2n,n} \MGL_S \wedge M_A \cong L_n \otimes A \oplus L_{n+1} \otimes \mathrm{Pic}(S) \otimes A.$$ 
\end{theorem}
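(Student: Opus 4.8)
The plan is to combine the reduction already achieved in Corollary \ref{sgtjtr}, namely the identification $\pi_{2n,n} \MGL_S \wedge M_A \cong \pi_{2n,n} \MGL_S\left<n,n+1\right> \wedge M_A$, with the computation of the two slices $s_n \MGL_S \wedge M_A$ and $s_{n+1} \MGL_S \wedge M_A$ in terms of the motivic Eilenberg--MacLane spectrum. Concretely, the object $\MGL_S\left<n,n+1\right> \wedge M_A = l_{n+2}(f_n(\MGL_S \wedge M_A))$ sits in an exact triangle
$$s_{n+1}\MGL_S \wedge M_A \to \MGL_S\left<n,n+1\right> \wedge M_A \to s_n\MGL_S \wedge M_A \to s_{n+1}\MGL_S \wedge M_A[1],$$
and by the $R$-coefficient version of \cite[Theorem 4.7]{spitzweck.rel} used in Theorem \ref{dhu5t3} we have $s_n\MGL_S \wedge M_A \cong \Sigma^{2n,n}\MA_S \otimes L_n$ and $s_{n+1}\MGL_S \wedge M_A \cong \Sigma^{2n+2,n+1}\MA_S \otimes L_{n+1}$. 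Applying $\pi_{2n,n}$ to the triangle therefore produces a long exact sequence in which the relevant terms are
$$\pi_{2n,n}\Sigma^{2n+2,n+1}\MA_S \otimes L_{n+1} \to \pi_{2n,n}\MGL_S\left<n,n+1\right>\wedge M_A \to \pi_{2n,n}\Sigma^{2n,n}\MA_S \otimes L_n \to \pi_{2n-1,n}\Sigma^{2n+2,n+1}\MA_S \otimes L_{n+1}.$$

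Next I would evaluate the outer Eilenberg--MacLane contributions using the description of the low-weight motivic complexes recalled in the Preliminaries. The term $\pi_{2n,n}\Sigma^{2n,n}\MA_S = \pi_{0,0}\MA_S = H^{0,0}(S,A) = A^{\pi_0(S)}$, which is just $A$ since $S$ is connected; this gives the $L_n \otimes A$ summand. The term $\pi_{2n,n}\Sigma^{2n+2,n+1}\MA_S = \pi_{-2,-1}\MA_S = H^{2,1}(S,A) \cong \Pic(S) \otimes A$ (using $\caM(1)\cong \caO^*[-1]$, hence $H^{2,1}(S,\integers)\cong\Pic(S)$, and tensoring with $A$ over $R$), which furnishes the $L_{n+1}\otimes\Pic(S)\otimes A$ summand. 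The final map in the displayed sequence lands in $\pi_{2n-1,n}\Sigma^{2n+2,n+1}\MA_S = \pi_{-3,-1}\MA_S = H^{3,1}(S,A)$, and since $\caM(1)$ is concentrated in a single cohomological degree, $H^{3,1}(S,A)=0$; so the sequence is a short exact sequence
$$0 \to L_{n+1}\otimes\Pic(S)\otimes A \to \pi_{2n,n}\MGL_S\left<n,n+1\right>\wedge M_A \to L_n\otimes A \to 0.$$

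Finally I would argue that this short exact sequence splits canonically, yielding the stated direct sum. The splitting should come from the unit/orientation: the composite $\MGL_S \wedge M_A \to s_0(\MGL_S \wedge M_A) \cong \MA_S$ (or rather its $\Sigma^{2n,n}$-twisted analogue) provides a natural section $\pi_{2n,n}\MGL_S\left<n,n+1\right>\wedge M_A \to \pi_{2n,n} s_n\MGL_S\wedge M_A = L_n\otimes A$ splitting the surjection, since $s_n$ is the top slice appearing in $\left<n,n+1\right>$ and the projection $f_n \to s_n$ induces exactly the quotient map in the triangle above. Thus the sequence is split and one obtains the claimed canonical isomorphism $\pi_{2n,n}\MGL_S\wedge M_A \cong L_n\otimes A \oplus L_{n+1}\otimes\Pic(S)\otimes A$. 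The main obstacle I anticipate is making the splitting genuinely canonical and compatible with the identifications of the slices — verifying that the $\MGL$-module structure (or the multiplicative structure of the slice tower) produces a well-defined natural section rather than merely an abstract splitting; the vanishing of $H^{3,1}$ and the computation of $H^{0,0}$, $H^{2,1}$ are routine given the Preliminaries.
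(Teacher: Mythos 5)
Your argument follows the paper's proof almost verbatim in its main steps: reduce via Corollary \ref{sgtjtr} to $\pi_{2n,n}\MGL_S\left<n,n+1\right>\wedge M_A$, apply the long exact sequence of the slice triangle, identify $\pi_{2n,n}s_n\MGL_S\wedge M_A\cong L_n\otimes A$ and $\pi_{2n,n}s_{n+1}\MGL_S\wedge M_A\cong\pi_{0,0}\Sigma^{2,1}\MA_S\otimes L_{n+1}\cong \mathrm{Pic}(S)\otimes L_{n+1}\otimes A$, and kill the outer terms. (One small omission: to get injectivity on the left of your short exact sequence you also need the term \emph{preceding} your displayed sequence, $\pi_{2n+1,n}s_n\MGL_S\wedge M_A\cong H^{-1,0}(S,A)\otimes L_n$, to vanish; this is true since $\caM(0)=S^0\underline{\integers}$ is concentrated in degree $0$, and the paper explicitly invokes the vanishing of both outer terms of the five-term sequence.)

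The step that does not work as you describe it is the splitting. You propose to split the surjection onto $L_n\otimes A$ using the map induced by the projection $f_n\to s_n$; but, as you yourself note, that map \emph{is} the surjection in the triangle, so it cannot simultaneously be a section of itself — as written this is circular and gives no splitting. The paper's splitting goes in the other direction and comes from a different source: the classifying map of the formal group law $L_*\to(\MGL_S)_{2*,*}$ recalled in the Preliminaries induces a canonical map $L_n\otimes A\to\pi_{2n,n}\MGL_S\wedge M_A$, and its composite with $\pi_{2n,n}\MGL_S\wedge M_A\cong\pi_{2n,n}\MGL_S\left<n,n+1\right>\wedge M_A\to\pi_{2n,n}s_n\MGL_S\wedge M_A\cong L_n\otimes A$ is the identity, by compatibility of the slice identification of Theorem \ref{dhu5t3} with the Hopkins--Morel map $\Phi_S$. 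So the canonical section maps $L_n\otimes A$ \emph{into} the middle term rather than out of it; with that replacement your argument coincides with the paper's and the canonicity worry you raise disappears, since no choice beyond the classifying map is involved.
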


\begin{proof}
We have the exact sequence
$$\pi_{2n+1,n} s_n \MGL_S \wedge M_A \to \pi_{2n,n} s_{n+1} \MGL_S \wedge M_A \to \pi_{2n,n} \MGL_S \left<n,n+1 \right> \wedge M_A$$
$$\to \pi_{2n,n} s_n \MGL_S \wedge M_A \to \pi_{2n-1,n} s_{n+1} \MGL_S \wedge M_A.$$
The two outer terms are $0$. Also $\pi_{0,0} \Sigma^{2,1} \MA_S \cong \mathrm{Pic}(S) \otimes A$.
Moreover there is a canonical map
$L_n \otimes A \to \pi_{2n,n} \MGL_S \wedge M_A$ splitting the resulting short exact sequence,
whence the claim follows form Corollary \ref{sgtjtr}.
\end{proof}

\begin{corollary}
Let $S$ be the spectrum of a Dedekind domain of mixed characteristic and
$R$ the localization of $\integers$ obtained by inverting all positive residue characteristics
of $S$. Then $$(\pi_{2n,n} \MGL_S) \otimes R \cong (L_n \oplus L_{n+1} \otimes \mathrm{Pic}(S)) \otimes R.$$
\end{corollary}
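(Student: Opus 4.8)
The plan is to deduce this corollary directly from Theorem~\ref{grfe5z4} by a suitable choice of coefficients. Set $R$ to be the localization of $\integers$ obtained by inverting all positive residue characteristics of $S$; this is a valid choice in the theorem since by \cite[Theorem 11.3]{spitzweck.em} the map $\Phi_S$ becomes an isomorphism after inverting the positive residue characteristics, hence $\Phi_S \wedge M_R$ is an isomorphism. Take the $R$-module $A = R$ itself. Then $M_A = M_R$, and Theorem~\ref{grfe5z4} gives a canonical isomorphism
$$\pi_{2n,n} \MGL_S \wedge M_R \cong L_n \otimes R \oplus L_{n+1} \otimes \Pic(S) \otimes R.$$

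The remaining point is to identify the left-hand side with $(\pi_{2n,n}\MGL_S) \otimes R$. Since $R$ is a localization of $\integers$, it is a flat $\integers$-module and $M_R$ is the corresponding Moore spectrum, so smashing with $M_R$ simply tensors homotopy groups with $R$: the universal coefficient sequence
$$0 \to \pi_{2n,n}\MGL_S \otimes R \to \pi_{2n,n}(\MGL_S \wedge M_R) \to \mathrm{Tor}^{\integers}_1(\pi_{2n-1,n}\MGL_S, R) \to 0$$
has vanishing $\mathrm{Tor}$-term by flatness of $R$, giving $\pi_{2n,n}(\MGL_S \wedge M_R) \cong (\pi_{2n,n}\MGL_S)\otimes R$. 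On the right-hand side, $L_n \otimes_{\integers} R$ and $L_{n+1} \otimes_{\integers} \Pic(S) \otimes_{\integers} R$ can be rewritten as $(L_n)\otimes R$ and $(L_{n+1}\otimes \Pic(S))\otimes R$ after noting all tensor products are over $\integers$ and $R$ is commutative, so the direct sum is $(L_n \oplus L_{n+1}\otimes \Pic(S))\otimes R$. Combining the two identifications yields the claim.

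There is essentially no obstacle here: the corollary is a formal specialization of the theorem, and the only substantive input is the flatness of a localization of $\integers$ together with the standard behaviour of Moore spectra, both of which are elementary. The only mild care needed is to make sure the canonical map $(\pi_{2n,n}\MGL_S)\otimes R \to \pi_{2n,n}(\MGL_S \wedge M_R)$ induced by the unit $\unit \to M_R$ agrees, after the identifications above, with the canonical splitting map $L_n \otimes R \to \pi_{2n,n}(\MGL_S \wedge M_R)$ used in the proof of Theorem~\ref{grfe5z4} restricted appropriately; this is immediate from the naturality of the constructions in the coefficient module $A$.
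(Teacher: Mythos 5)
Your proof is correct and follows exactly the route the paper intends: the corollary is stated without proof as an immediate specialization of Theorem~\ref{grfe5z4} with $A=R$, using that inverting the residue characteristics makes $\Phi_S\wedge M_R$ an isomorphism and that smashing with the Moore spectrum of a flat localization of $\integers$ just tensors homotopy groups with $R$. Your spelled-out version, including the flatness/universal-coefficient point, is exactly the missing detail and is fine.
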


We have the following case of a Conjecture of Voevodsky (see \cite[Conjecture 1]{voevodsky.icm}):

\begin{theorem}
\label{hte4234t}
Let $S=\Spec(R)$, where $R$ is a (regular) Noetherian local ring which is regular over some discrete valuation ring
of mixed characteristic. Then the natural map
$$L_* \to (\MGL_S)_{2*,*}$$
becomes an isomorphism after inverting the residue characteristic of the closed point of $S$.
\end{theorem}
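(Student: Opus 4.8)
The plan is to establish, for $S=\Spec R$, the analogue of Theorem~\ref{grfe5z4}: since $R$ is local the Picard term there vanishes, and the geometric part reduces to the Lazard ring. Write the base discrete valuation ring as $\mathcal O$, with uniformizer $\pi$, residue field $k=\mathcal O/\pi$ of characteristic $p>0$ and fraction field $K=\mathcal O[1/\pi]$ of characteristic $0$; let $p$ be as in the statement the residue characteristic of the closed point of $S$ — if this is $0$ then $\pi$ is a unit in $R$, so $R$ is essentially smooth over the characteristic-$0$ field $K$, and everything below holds verbatim with $\integers$ for $\integers[1/p]$ and without the localization step, so assume $p>0$. By Popescu's theorem $R$ is a filtered colimit of smooth $\mathcal O$-algebras, so $\Spec R$ is essentially smooth over the Dedekind domain $\mathcal O$ of mixed characteristic; every residue field of $R$ being a finitely generated extension of $K$ or of $k$, the only positive residue characteristic of $S$ is $p$, hence $\Phi_S\wedge M_{\integers[1/p]}$ is an isomorphism by \cite[Theorem~11.3]{spitzweck.em} and Theorem~\ref{dhu5t3} gives $s_n(\MGL_S\wedge M_{\integers[1/p]})\cong\Sigma^{2n,n}(\MZ_S\wedge M_{\integers[1/p]})\otimes L_n$.

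The hard part is the connectivity $\MGL_S\in\SH(S)_{h\ge -1}$ — the analogue of Proposition~\ref{gdrttt} in arbitrary Krull dimension. It suffices to prove $\MGL_{\Spec B}\in\SH(\Spec B)_{h\ge -1}$ for every henselian regular local ring $B$ essentially smooth over $\mathcal O$ (the henselizations of local rings of smooth $S$-schemes). If $\pi$ is invertible in $B$, then $B$ is essentially smooth over the characteristic-$0$ field $K$ and $\MGL_{\Spec B}\in\SH(\Spec B)_{h\ge 0}$ by smooth base change from $\MGL_K\in\SH(K)_{h\ge 0}$ (cf.\ \cite[Theorem~2.3]{hoyois.hopkins-morel} and connectivity of $\MGL$). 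Otherwise $\pi\in\mathfrak m_B$; since $B/\pi$ is regular local of dimension $\dim B-1$ one has $\pi\notin\mathfrak m_B^2$, so $i\colon\Spec B/\pi\hookrightarrow\Spec B$ is a regular closed immersion of codimension $1$ between regular schemes with open complement $j\colon\Spec B[1/\pi]\hookrightarrow\Spec B$. I would run the localization triangle $i_!i^!\MGL_{\Spec B}\to\MGL_{\Spec B}\to j_*\MGL_{\Spec B[1/\pi]}\to i_!i^!\MGL_{\Spec B}[1]$ as in Lemma~\ref{gferghd}, but with two strengthenings. For the left term, absolute purity for $\MGL$ gives $i^!\MGL_{\Spec B}\cong\Sigma^{-2,-1}\MGL_{\Spec B/\pi}$; as $B/\pi$ is essentially smooth over the field $k$ we get $\MGL_{\Spec B/\pi}\in\SH(\Spec B/\pi)_{h\ge 0}$, hence $i^!\MGL_{\Spec B}\in\SH(\Spec B/\pi)_{h\ge -1}$, and Proposition~\ref{gegrsy} pushes this forward to $\SH(\Spec B)_{h\ge -1}$. (This replaces Proposition~\ref{greerg}, whose conclusion lies only in the weaker $\SH(-)_{\ge -1}$, which agrees with the $h$-version solely when the closed point is a field, as in the Dedekind case.) For the right term one shows $j_*\MGL_{\Spec B[1/\pi]}\in\SH(\Spec B)_{h\ge 0}$ by repeating Lemmas~\ref{gfewet}, \ref{rergr} and \ref{eweggg} with $\eta$ replaced by $\Spec B[1/\pi]$: the characteristic-$0$ slice formula and the slice-completeness of $\MGL_{\Spec B[1/\pi]}$ follow by smooth base change from $K$, the analogue of Proposition~\ref{fhtrth} from purity for $\MZ$ (\cite[Theorem~7.4]{spitzweck.em}) together with Lemma~\ref{vftjt} and Proposition~\ref{gegrsy}, and $j_*$ commutes with the relevant homotopy limits (\cite[Corollary~2.4 and Lemma~8.10 or Theorem~8.12]{hoyois.hopkins-morel}), so that Proposition~\ref{gr455} applies. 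The triangle together with closure of $\SH(\Spec B)_{h\ge -1}$ under extensions gives the claim.

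Granting this, completeness follows exactly as in Corollary~\ref{gfrerz}: $f_k(\MGL_S\wedge M_{\integers[1/p]})$ is a homotopy colimit of the $\Sigma^{2i,i}(\MGL_S\wedge M_{\integers[1/p]})$ with $i\ge k$ (proof of \cite[Theorem~4.7]{spitzweck.rel}), hence lies in $\SH(S)_{h\ge k-1}$, so Corollary~\ref{dwrzjj} gives $\MGL_S\wedge M_{\integers[1/p]}\cong\holim_k\MGL_S\langle 0,k\rangle\wedge M_{\integers[1/p]}$. Transcribing Lemma~\ref{jewet45}, Corollary~\ref{dsgtjt}, Lemma~\ref{xfhtjt} and Corollary~\ref{sgtjtr} one obtains (using flatness of $\integers[1/p]$ over $\integers$) that $(\MGL_S)_{2n,n}\otimes\integers[1/p]\cong\pi_{2n,n}(\MGL_S\wedge M_{\integers[1/p]})\cong\pi_{2n,n}(\MGL_S\langle n,n+1\rangle\wedge M_{\integers[1/p]})$. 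Feeding in the triangle $s_{n+1}\to\MGL_S\langle n,n+1\rangle\to s_n\to s_{n+1}[1]$ smashed with $M_{\integers[1/p]}$: one has $\pi_{2n,n}s_{n+1}(\MGL_S\wedge M_{\integers[1/p]})=H^{2,1}(S,\integers[1/p])\otimes L_{n+1}=\Pic(S)\otimes L_{n+1}\otimes\integers[1/p]=0$ since $R$ is local, and $\pi_{2n-1,n}s_{n+1}(\MGL_S\wedge M_{\integers[1/p]})=H^{3,1}(S,\integers[1/p])\otimes L_{n+1}=0$ since the Gersten resolution of $\caO^*$ on the regular integral scheme $\Spec R$ shows $H^i_{\Zar}(\Spec R,\caO^*)=0$ for $i\ge 2$; therefore $\pi_{2n,n}(\MGL_S\langle n,n+1\rangle\wedge M_{\integers[1/p]})\xrightarrow{\sim}\pi_{2n,n}s_n(\MGL_S\wedge M_{\integers[1/p]})=H^{0,0}(S,\integers[1/p])\otimes L_n=L_n\otimes\integers[1/p]$. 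As in the proof of Theorem~\ref{grfe5z4}, the canonical map $L_n\otimes\integers[1/p]\to(\MGL_S)_{2n,n}\otimes\integers[1/p]$ is a section of this composite of isomorphisms and hence itself an isomorphism, so $L_*\to(\MGL_S)_{2*,*}$ becomes an isomorphism after inverting $p$.

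I expect the main obstacle to be the connectivity step $\MGL_S\in\SH(S)_{h\ge -1}$ for $S$ of arbitrary Krull dimension. Beyond the Dedekind case this rests on two inputs: absolute purity for $\MGL$ along a regular codimension-one closed immersion between regular schemes — needed to control $i^!\MGL_{\Spec B}$ at the level of homotopy sheaves rather than only of the generated subcategory — and the characteristic-$0$ Hopkins--Morel package (the slices and the slice-completeness of $\MGL$) over schemes essentially smooth over a characteristic-$0$ field, needed for $j_*\MGL_{\Spec B[1/\pi]}$; the latter also uses connectivity of $\MGL$ over the possibly imperfect residue field $k$. All remaining steps are transcriptions of the Dedekind-case arguments with the Picard group set to zero.
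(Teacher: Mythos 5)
Your proposal is not complete: its load-bearing step, the connectivity statement $\MGL_S\in\SH(S)_{h\ge -1}$ for $S=\Spec(R)$ of arbitrary Krull dimension, is not proved but only reduced to absolute purity for $\MGL$ along the codimension-one regular immersion $\Spec(B/\pi)\hookrightarrow\Spec(B)$, i.e.\ to $i^!\MGL_{\Spec B}\cong\Sigma^{-2,-1}\MGL_{\Spec B/\pi}$. You correctly observe that the paper's Proposition \ref{greerg} sidesteps purity by concluding only $i^!\MGL_S\in\SH(s)_{\ge -1}$ and then invoking $\SH(s)_{\ge -1}=\SH(s)_{h\ge -1}$ because $s$ is the spectrum of a field; once $B/\pi$ has positive dimension that identification is unavailable, and you offer no substitute other than purity itself, which is a deep statement not established in (or needed by) this paper. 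You flag this yourself as the ``main obstacle,'' so as it stands the argument is conditional on an unproven input, and the theorem does not follow.

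The paper's own proof shows that this entire detour is unnecessary: one never forms $\SH(S)$ for the higher-dimensional local base at all. After Popescu, $R$ is the filtered colimit of the coordinate rings of (local schemes of) $X\in\Sm_{\Spec(V)}$ for a discrete valuation ring $V$, so by continuity $(\MGL_S)_{2n,n}=\colim_U\upi^\pre_{2n,n}(\MGL_V)(U)$ with all $U\in\Sm_V$ of dimension bounded by some fixed $d$. The already-established Dedekind-case results over $V$ (Proposition \ref{gdrttt}, Corollary \ref{gfrerz}, Corollary \ref{dsgtjt}) then say that, after inverting $p$, this presheaf value is computed by a finite portion $\MGL_V\left<0,k\right>$ of the slice tower with $k$ depending only on $d$; passing to the colimit, the contributions of the slices $s_m$ with $m\ne n$ are motivic cohomology groups $H^{a,b}$ of the local ring $R$ with $a>b$, which vanish, leaving exactly $L_n[1/p]$. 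Your closing computation (the vanishing of the $\Pic$ term and of $H^{3,1}$ for a regular local ring) is the right endgame, but you reach it only conditionally; if you want to salvage your write-up, replace the whole connectivity-over-$S$ section by the continuity argument over $V$.
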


\begin{proof}
By Popescu's Theorem $R$ is a filtered colimit of smooth algebras over a discrete valuation ring $V$
of mixed characteristic. Thus we are reduced to the case where $R$ is the local ring
of a scheme $X \in \Sm_{\Spec(V)}$ by a colimit argument. Let $p$ be the residue characteristic
of the closed point of $\Spec(V)$.
By the same type of argument as above and the vanishing of $(p,q)$-motivic
cohomology of such local rings for $p>q$ we have
$$(MGL_S)_{2n,n} [1/p] \cong (s_n \MGL_S)_{2n,n} [1/p] \cong L_n [1/p],$$
using that for a fixed dimension only a fixed finite number of slices of $\MGL_S[1/p]$
contribute to the value of $\pi^\pre_{2n,n}(\MGL_S [1/p])$ on schemes of that dimension.
\end{proof}

More generally we have

\begin{proposition}
Let $S$ be as in the previous Theorem and $E \in \SH(S)$ a motivic Landweber spectrum modelled
on $E^{\mathrm{top}}_{2*}$. Then the natural map
$$E^{\mathrm{top}}_{2*} \to E_{2*,*}$$ is an isomorphism after
inverting the residue characteristic of the closed point of $S$.
\end{proposition}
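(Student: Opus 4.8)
The plan is to reduce the statement for a general motivic Landweber spectrum $E$ to the already-established case $E=\MGL_S$ of Theorem~\ref{hte4234t}, by exploiting the Landweber exactness that produces $E$ from $\MGL_S$. First I would recall that a motivic Landweber spectrum modelled on $E^{\mathrm{top}}_{2*}$ is, by construction, obtained as a tensor product $\MGL_S \otimes_{\MGL^{\mathrm{top}}_{2*}} E^{\mathrm{top}}_{2*}$ in a suitable derived sense, so that the natural map realizing $E^{\mathrm{top}}_{2*}$ as the geometric coefficients of $E$ factors through the classifying map $L_* \to (\MGL_S)_{2*,*}$ for $\MGL_S$; concretely one has a base-change type formula computing $\upi^{\pre}_{2n,n}(E)$ from $\upi^{\pre}_{2*,*}(\MGL_S)$ and the $\MU_*$-module structure on $E^{\mathrm{top}}_{2*}$.

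Next I would invert the residue characteristic $p$ of the closed point and use Theorem~\ref{hte4234t}: after this localization $L_*[1/p] \to (\MGL_S)_{2*,*}[1/p]$ is an isomorphism, and moreover (as in the proof of that Theorem) only finitely many slices contribute in each dimension, so the relevant $\lim^1$-terms vanish and the derived tensor product collapses to an ordinary one. Thus $E_{2*,*}[1/p] \cong (\MGL_S)_{2*,*}[1/p] \otimes_{L_*} E^{\mathrm{top}}_{2*} \cong L_*[1/p] \otimes_{L_*} E^{\mathrm{top}}_{2*} \cong E^{\mathrm{top}}_{2*}[1/p]$, which is the claim. The key point making the derived tensor product behave well is the Landweber exactness hypothesis on $E^{\mathrm{top}}_{2*}$ as an $\MU_*$-module (equivalently as an $L_*$-module after the usual regrading), which guarantees that the relevant $\Tor$-groups vanish.

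The main obstacle I expect is bookkeeping around the slice and homotopy-sheaf filtrations: to pass from the slice-by-slice identification $(s_n\MGL_S)_{2n,n}[1/p]\cong L_n[1/p]$ to a statement about $E$, one needs that the slices of $E\wedge M_R$ are likewise computed by Landweber exactness (the construction in \cite{spitzweck.slices} referenced in the earlier remark), that $E$ is complete with respect to the slice filtration (the Landweber analogue of Corollary~\ref{gfrerz}, noted in the remark after that Corollary), and that the convergence is strong enough — finitely many contributing slices per dimension — to identify the inverse limit. Each of these is available from the results quoted earlier in the paper, so the proof is essentially an assembly; the only genuinely new ingredient is checking that tensoring with the Landweber-exact $\MU_*$-module commutes with the relevant limits and with passing from $\MGL_S$ to $E$, which is exactly where the Landweber exactness hypothesis is used.
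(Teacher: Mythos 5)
Your proposal is correct and takes essentially the same route as the paper, whose entire proof is to invoke the defining property of a motivic Landweber spectrum --- namely the (underived, by Landweber exactness) base-change formula $E_{**} \cong (\MGL_S)_{**} \otimes_{\MU_*} E^{\mathrm{top}}_{2*}$ --- and then apply Theorem \ref{hte4234t} to identify $(\MGL_S)_{2*,*}[1/p]$ with $L_*[1/p]$, so that restricting to the geometric diagonal gives $E_{2*,*}[1/p] \cong L_*[1/p]\otimes_{L_*}E^{\mathrm{top}}_{2*} \cong E^{\mathrm{top}}_{2*}[1/p]$. The extra bookkeeping you anticipate (slice completeness of $E$, vanishing of $\lim^1$-terms, identification of the slices of $E$) is not actually needed, since the Landweber formula already computes the homotopy groups of $E$ directly from those of $\MGL_S$.
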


\begin{proof}
This follows from the definition of motivic Landweber spectrum.
\end{proof}

\section{Some other parts of algebraic cobordism}
\label{dzu634e}

We have the following vanishing result:

\begin{proposition}
Let $S$ be the spectrum of a Dedekind domain of mixed characteristic.
Let $R$ be a localization of $\integers$ such that $\Phi_S \wedge M_R$
is an isomorphism.
Then for any $p,q \in \integers$ and $R$-module $A$ we have
$\upi_{p,q} \MGL_S \wedge M_A \cong 0$ for $p< 2q$ or $p<q$. In particular
we have $\MGL_S \wedge M_R \in \SH(S)_{h \ge 0}$.
\end{proposition}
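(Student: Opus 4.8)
The plan is to reduce the vanishing statement to the known structure of the slices of $\MGL_S \wedge M_A$ and the completeness result of Corollary \ref{gfrerz}. First I would recall that, under the standing hypothesis, Theorem \ref{dhu5t3} (in its $R$-coefficient form, valid after base change along essentially smooth $X \to S$) gives $s_n(\MGL_S \wedge M_A) \cong \Sigma^{2n,n} \MA_S \otimes L_n$ for all $n \ge 0$, and $s_n(\MGL_S \wedge M_A) \cong 0$ for $n < 0$. Next I would invoke Lemma \ref{vftjt}, which says $\MA_S \in \SH(S)_{h \ge 0}$; twisting, $\Sigma^{2n,n}\MA_S \in \SH(S)_{h \ge n}$, and since $L_n$ is a free abelian group, the same holds for $\Sigma^{2n,n}\MA_S \otimes L_n$. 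Thus each slice $s_n(\MGL_S \wedge M_A)$ lies in $\SH(S)_{h \ge n}$.

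From here I would run the same completeness-and-tower argument already used repeatedly in the excerpt. By Corollary \ref{gfrerz} we have $\MGL_S \wedge M_A \cong \holim_{k} \MGL_S\langle 0,k\rangle \wedge M_A$ (taking $n=0$ there), and the exact triangles $s_k(\MGL_S \wedge M_A) \to \MGL_S\langle 0,k\rangle \wedge M_A \to \MGL_S\langle 0,k-1\rangle \wedge M_A \to s_k(\MGL_S \wedge M_A)[1]$, together with $\MGL_S\langle 0,0\rangle \wedge M_A = s_0(\MGL_S \wedge M_A) \in \SH(S)_{h \ge 0}$ and the fact that $\SH(S)_{h \ge 0}$ is closed under extensions, give by induction on $k$ that $\MGL_S\langle 0,k\rangle \wedge M_A \in \SH(S)_{h \ge 0}$ for every $k \ge 0$. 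Now I would apply Proposition \ref{gr455}: its hypotheses are exactly verified by Lemma \ref{jewet45}, which says that for fixed $p,q$ and $X \in \Sm_S$ the system $\upi^\pre_{p,q}(\MGL_S\langle 0,k\rangle \wedge M_A)(X)$ is eventually constant. Hence $\holim_k \MGL_S\langle 0,k\rangle \wedge M_A \in \SH(S)_{h \ge 0}$, i.e.\ $\MGL_S \wedge M_A \in \SH(S)_{h \ge 0}$; this is the last assertion. In particular $\upi_{p,q}(\MGL_S \wedge M_A) = 0$ whenever $p - q < 0$, which handles the case $p < q$.

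For the remaining case $p < 2q$ I would argue more directly from the slice tower rather than the homotopy $t$-structure. Using Corollary \ref{gfrerz} again and the Milnor sequence for $\upi^\pre_{p,q}$, it suffices to show $\upi^\pre_{p,q}(\MGL_S\langle 0,k\rangle \wedge M_A)(X) = 0$ for all $X \in \Sm_S$ and all $k$ when $p < 2q$; by the slice triangles this reduces to the vanishing of $\upi^\pre_{p,q}(s_n(\MGL_S \wedge M_A))(X) = \upi^\pre_{p,q}(\Sigma^{2n,n}\MA_S \otimes L_n)(X) = H^{2n-p,\,n-q}_{\mathrm{mot}}(X, L_n \otimes A)$ for all $n \ge 0$. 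When $p < 2q$ and $n \ge 0$, the weight $n - q$ and the cohomological degree $2n - p$ satisfy $2n - p > 2n - 2q \ge n - q$ (the last inequality being $n \ge q$, which holds once $n \ge q$; for $0 \le n < q$ the weight $n-q$ is negative and motivic cohomology vanishes outright). So the relevant motivic cohomology vanishes because $\caM(r)$ has no cohomology sheaves above degree $r$ (Geisser, \cite[Corollary 4.4]{geisser.dede}), exactly as in Lemma \ref{vftjt}. Sheafifying gives $\upi_{p,q}(\MGL_S \wedge M_A) = 0$ for $p < 2q$.

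The main obstacle is bookkeeping rather than anything deep: one must be a little careful that the vanishing ranges for the two conditions $p < 2q$ and $p < q$ are genuinely covered by the slice contributions $H^{2n-p,n-q}_{\mathrm{mot}}(X, L_n \otimes A)$ over the full range $n \ge 0$ — in particular splitting off the subrange $n < q$ where the weight is negative — and that the eventual-constancy input of Lemma \ref{jewet45} legitimately lets one pass the $\upi_{p,q}$-vanishing through the homotopy limit via the Milnor sequence (the $\lim^1$ term vanishing because the system stabilizes). Both points are routine given the machinery already assembled above.
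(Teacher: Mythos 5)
Your argument for the case $p<q$ (and hence for the ``in particular'' statement) is correct and is essentially the paper's machinery repackaged: you verify that each $\MGL_S\left<0,k\right>\wedge M_A$ lies in $\SH(S)_{h\ge 0}$ as an iterated extension of the slices $\Sigma^{2n,n}\MA_S\otimes L_n\in\SH(S)_{h\ge n}$, and then apply Proposition \ref{gr455} with the stabilization input of Lemma \ref{jewet45} (whose bound indeed depends only on $p,q$ and $\dim X$, so the uniformity required in Proposition \ref{gr455} holds). This is fine.

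The case $p<2q$, however, contains a genuine gap: you reduce to the claim that the \emph{presheaf} groups $\upi^\pre_{p,q}(s_n(\MGL_S\wedge M_A))(X)=H^{2n-p,n-q}_{\mathrm{mot}}(X,L_n\otimes A)$ vanish for all $X$ whenever the degree exceeds the weight, citing Geisser's vanishing of the cohomology sheaves of $\caM(r)$ above degree $r$. That input only kills the Nisnevich/Zariski \emph{sheaves} in degrees above the weight; at the presheaf level the hypercohomology $H^{j,r}(X,-)$ can be nonzero for $r<j\le r+\dim X$ (already $H^{2,1}(X)=\Pic(X)$ shows the principle you invoke is false). Since the intermediate claim is false, you cannot run the Milnor sequence over the infinite tower at the presheaf level and ``sheafify at the end'': sheafification does not commute with the homotopy limit. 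The repair is exactly the paper's argument (and the order of operations matters): for fixed $p,q$ and fixed dimension $d$, first use Corollary \ref{dsgtjt} together with Lemma \ref{jewet45} to identify $\upi_{p,q}(\MGL_S\wedge M_A)|_{X_\Nis}$ with $\upi_{p,q}(\MGL_S\left<0,N\right>\wedge M_A)|_{X_\Nis}$ for a single finite stage $N=N(p,q,d)$, and only then run the induction up the finitely many slice triangles at the level of \emph{sheaves}, where the negative-weight vanishing (for $n<q$) and Geisser's bound (for $n\ge q$, where $2n-p>n-q$) legitimately apply. With that modification your computation of the relevant degrees and weights is correct.
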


\begin{proof}
Let $p,q \in \integers$ satisfying the condition of the statement.
Let $d \in \naturals$. Then there is a $N \ge q$ such that for any scheme of dimension $\le d$
and $k \ge N$ the map $$\upi^\pre_{p,q}(\MGL_S \wedge M_A)(X) \to
\upi^\pre_{p,q}(\MGL_S\left< 0,k \right> \wedge M_A)(X)$$
is an isomorphism. The assertion then follows by an induction argument on $i$ showing
that $\upi_{p,q}(\MGL_S \left< q,q+i \right> \wedge M_A)=0$.
\end{proof}

Generalizing the argument given in the last proof we get

\begin{lemma}
\label{dh5t43}
Let $S$ be the spectrum of a Dedekind domain of mixed characteristic.
Let $R$ be a localization of $\integers$ such that $\Phi_S \wedge M_R$
is an isomorphism.
Then for any $p,q \in \integers$ and $R$-module $A$ we have
$$\upi_{p,q}(\MGL_S \wedge M_A) \cong \lim_k \upi_{p,q}(\MGL_S \left<0,k \right> \wedge M_A)
\cong \upi_{p,q}(\MGL_S \left<\max(0,q),n \right> \wedge M_A)$$
for $n \ge p-q$ or $n \ge p-2q$.
\end{lemma}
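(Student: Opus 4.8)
The plan is to run a parametrized version of the argument in the preceding Proposition: recover $\MGL_S\wedge M_A$, on homotopy sheaves, from its slice tower, and then pin down the finite stage of the tower at which the inverse limit is already reached.

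\emph{First isomorphism.} Since $\MGL_S$ is effective, $f_0(\MGL_S\wedge M_A)\cong\MGL_S\wedge M_A$, and Corollary \ref{gfrerz} identifies this with $\holim_k\MGL_S\langle 0,k\rangle\wedge M_A$. Fix $X\in\Sm_S$; for every $Y\in X_\Nis$ the Milnor exact sequence presents $\upi^{\pre}_{p,q}(\holim_k\MGL_S\langle 0,k\rangle\wedge M_A)(Y)$ as an extension of $\lim_k\upi^{\pre}_{p,q}(\MGL_S\langle 0,k\rangle\wedge M_A)(Y)$ by the ${\lim}^1$ of the tower $\bigl(\upi^{\pre}_{p+1,q}(\MGL_S\langle 0,k\rangle\wedge M_A)(Y)\bigr)_k$. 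By Lemma \ref{jewet45} each such tower of abelian groups is eventually constant, and---the one point that has to be checked---the proof of that Lemma shows that the stage after which it stabilizes depends only on $\dim Y\le\dim X$, so the stabilization is uniform over $X_\Nis$. Hence the ${\lim}^1$ vanishes and both $\upi^{\pre}_{p,q}(\MGL_S\wedge M_A)$ and $\lim_k\upi^{\pre}_{p,q}(\MGL_S\langle 0,k\rangle\wedge M_A)$ agree on $X_\Nis$ with $\upi^{\pre}_{p,q}(\MGL_S\langle 0,N\rangle\wedge M_A)$ for all sufficiently large $N$ (depending only on $\dim X$). Sheafifying over $X_\Nis$ and letting $X$ vary gives the first isomorphism of sheaves, and simultaneously shows that the inverse limit in the middle is attained at a finite stage.

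\emph{Second isomorphism.} To locate that stage I would use the slice formula $s_k(\MGL_S\wedge M_A)\cong\Sigma^{2k,k}\MA_S\otimes L_k$ of Theorem \ref{dhu5t3}: then $\upi_{p,q}\bigl(s_k(\MGL_S\wedge M_A)\bigr)\cong\upi_{p-2k,\,q-k}(\MA_S)\otimes L_k$, and likewise in degree $p-1$. Since $\MA_S\in\SH(S)_{h\ge 0}$ (Lemma \ref{vftjt}) and $\MA_S$ has no homotopy sheaves in positive weight ($\caM(r)\cong 0$ for $r<0$), these sheaves vanish once $k$ leaves a bounded range; feeding this into the triangles $s_k(\MGL_S\wedge M_A)\to\MGL_S\langle 0,k\rangle\wedge M_A\to\MGL_S\langle 0,k-1\rangle\wedge M_A\to s_k(\MGL_S\wedge M_A)[1]$ shows the transition maps on $\upi_{p,q}$ are isomorphisms beyond that range, so that $\lim_k\upi_{p,q}(\MGL_S\langle 0,k\rangle\wedge M_A)\cong\upi_{p,q}(\MGL_S\langle 0,n\rangle\wedge M_A)$ for $n$ in the asserted range. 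It then remains to replace the lower cut $0$ by $\max(0,q)$: if $q\le 0$ there is nothing to do, while if $q\ge 1$ the slices $s_0,\ldots,s_{q-1}$ of $\MGL_S\wedge M_A$ are the twists $\Sigma^{2i,i}\MA_S\otimes L_i$ with $0\le i<q$, so $\upi_{p,q}$ and $\upi_{p+1,q}$ of each is a homotopy sheaf of $\MA_S$ in the strictly positive weight $q-i$ and hence vanishes. Iterating the triangles $s_i(\MGL_S\wedge M_A)\to\MGL_S\langle 0,i\rangle\wedge M_A\to\MGL_S\langle 0,i-1\rangle\wedge M_A\to s_i(\MGL_S\wedge M_A)[1]$ then gives $\upi_{p,q}(\MGL_S\langle 0,q-1\rangle\wedge M_A)=0=\upi_{p+1,q}(\MGL_S\langle 0,q-1\rangle\wedge M_A)$, and the triangle $\MGL_S\langle q,n\rangle\wedge M_A\to\MGL_S\langle 0,n\rangle\wedge M_A\to\MGL_S\langle 0,q-1\rangle\wedge M_A\to\MGL_S\langle q,n\rangle\wedge M_A[1]$ identifies $\upi_{p,q}(\MGL_S\langle q,n\rangle\wedge M_A)$ with $\upi_{p,q}(\MGL_S\langle 0,n\rangle\wedge M_A)$. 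Concatenating the three steps gives the stated chain of isomorphisms.

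The genuinely delicate step is the first one: one trades a homotopy limit for an ordinary inverse limit of homotopy \emph{sheaves}, which is legitimate only because the stabilization in Lemma \ref{jewet45} is uniform over each small Nisnevich site---this is exactly why the dimension-dependence built into that Lemma's proof is essential. Everything after that is a routine chase through the slice long exact sequences, keeping track of the two ranges (governed respectively by $p-q$ and by $p-2q$) in which the relevant Eilenberg--MacLane homotopy sheaves vanish.
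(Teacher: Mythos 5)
Your overall route is the one the paper itself indicates (it only says ``generalizing the argument given in the last proof''): recover $\MGL_S\wedge M_A$ from its slice tower via Corollary \ref{gfrerz}, kill the $\lim^1$ using the eventual constancy of Lemma \ref{jewet45}, use the uniformity of the stabilization stage over each $X_{\Nis}$ (it depends only on $p,q,\dim X$) to commute sheafification past the limit, and then chase the slice triangles. The first isomorphism and the reduction of the lower cut from $0$ to $\max(0,q)$ are correct, and you rightly isolate the uniform-stabilization point as the delicate step.

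The gap is in the upper cutoff. The two vanishing statements you invoke for $\MA_S$ --- namely $\upi_{a,b}(\MA_S)=0$ for $a<b$ (Lemma \ref{vftjt}) and for $b>0$ (no positive weights) --- give $\upi_{p,q}(s_k(\MGL_S\wedge M_A))\cong\caH^{2k-p,k-q}(-,A)\otimes L_k=0$ exactly for $k>p-q$ or $k<q$. This yields the transition maps being isomorphisms for $k\ge p-q+1$, hence the bound $n\ge p-q$, but it does \emph{not} yield the alternative bound $n\ge p-2q$: the condition ``$2k-p$ exceeds twice the weight $k-q$'' reads $p<2q$ and is independent of $k$, so no vanishing criterion at your disposal cuts the tower off at $k=p-2q$ when $q>0$. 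Concretely, take $(p,q)=(4,1)$ and $n=p-2q=2$: then $\upi_{4,1}(s_3(\MGL_S\wedge M_A))\cong\caH^{2,2}(-,A)\otimes L_3\cong\underline{K}^M_2\otimes L_3\otimes A$, which is generally nonzero and is precisely what the truncation $\MGL_S\left<1,2\right>\wedge M_A$ discards (one checks $\upi_{4,1}(\MGL_S\left<1,2\right>\wedge M_A)=0$ while $\upi_{4,1}(\MGL_S\wedge M_A)\cong\underline{K}^M_2\otimes L_3\otimes A$). So the clause ``$n\ge p-2q$'' cannot be obtained by your argument for $q>0$, and indeed seems to fail there; what is true, and what all later applications in the paper actually use, is that the slices contributing to $\upi_{p,q}$ are exactly those with $\max(0,q)\le k\le p-q$ (a window of length $p-2q+1$ when $q\ge 0$). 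You should either restrict your claim to $n\ge p-q$, or supply a separate argument (which I do not see) for the $p-2q$ range.
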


\begin{corollary}
Let $S$ be the spectrum of a Dedekind domain of mixed characteristic.
Let $R$ be a localization of $\integers$ such that $\Phi_S \wedge M_R$
is an isomorphism. Then for any $R$-module $A$
and $n \in \integers$ we have $\upi_{n,n}(\MGL_S \wedge M_A) \cong \underline{K}^M_{-n} \otimes A$,
where $\underline{K}^M_{-n}$ is the $(-n)$-th Milnor-$K$-theory sheaf defined via the
degree $(-n,-n)$-motivic cohomology.
\end{corollary}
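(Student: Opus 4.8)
The plan is to deduce this as a direct specialization of Lemma \ref{dh5t43}. Setting $p=q=n$ there, we get
$$\upi_{n,n}(\MGL_S \wedge M_A) \cong \upi_{n,n}(\MGL_S \langle \max(0,n), m \rangle \wedge M_A)$$
for $m$ sufficiently large (the condition $m \ge p-q = 0$ is automatic, so any $m \ge \max(0,n)$ works, and one may even take $m = \max(0,n)$). Thus the first task is to identify the relevant $\langle \cdot, \cdot \rangle$-object on the geometric diagonal line $p=q$.

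First I would treat the case $n \le 0$, which is the substantive one since $\underline{K}^M_{-n}$ vanishes for $n>0$ (negative Milnor $K$-theory). Here $\max(0,n)=0$, and the point is that only the bottom slice $s_0 \MGL_S$ contributes to $\upi_{n,n}$ with $n \le 0$: indeed $s_k \MGL_S \wedge M_A \cong \Sigma^{2k,k}\MA_S \otimes L_k$ by Theorem \ref{dhu5t3}, and $\upi_{n,n}(\Sigma^{2k,k}\MA_S) = \upi_{n-2k,n-k}(\MA_S)$, which, by the identification of $\MA$ with motivic cohomology, is the sheaf associated to $H^{k-n, k-n}(-, A)$ up to a shift; for $k \ge 1$ this lands in a range where it vanishes (one needs $(k-n) - (k-n) = 0$ but the cohomological degree $2k-n$ exceeds the weight $k-n$ when $k \ge 1$, and $\caM(r)$ has vanishing cohomology sheaves above degree $r$ by Lemma \ref{vftjt}, i.e. \cite[Corollary 4.4]{geisser.dede}). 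Therefore by induction along the triangles $s_k \MGL_S \wedge M_A \to \MGL_S\langle 0,k\rangle \wedge M_A \to \MGL_S\langle 0,k-1\rangle\wedge M_A \to$, the maps $\upi_{n,n}(\MGL_S\langle 0,k\rangle \wedge M_A) \to \upi_{n,n}(\MGL_S\langle 0,0\rangle \wedge M_A) = \upi_{n,n}(s_0 \MGL_S \wedge M_A)$ are isomorphisms. Combining with Lemma \ref{dh5t43} we get
$$\upi_{n,n}(\MGL_S \wedge M_A) \cong \upi_{n,n}(s_0 \MGL_S \wedge M_A) \cong \upi_{n,n}(\MA_S),$$
using $s_0 \MGL_S \cong \MZ_S$ and $L_0 = \integers$. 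Finally $\upi_{n,n}(\MA_S)$ is by definition the Nisnevich sheafification of $X \mapsto H^{-n,-n}(X, A)$, which is exactly $\underline{K}^M_{-n} \otimes A$ (for $A$ flat over $R$; in general one must be slightly careful, but the statement already implicitly restricts to $R$-modules where the universal-coefficient comparison $\MZ_S \wedge M_A \simeq \MA_S$ holds). For $n > 0$ the same computation gives $\upi_{n,n}(\MA_S) = 0 = \underline{K}^M_{-n}\otimes A$, so the formula holds uniformly.

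The main obstacle I anticipate is purely bookkeeping: making sure the slice-vanishing input is applied with the correct indices, i.e. verifying that $\upi_{n-2k,n-k}(\MA_S) = 0$ for all $k \ge 1$ when $n \le 0$ (and for all $k \ge 0$ when $n > 0$). This reduces to the statement that $\caM(r)$ is concentrated in cohomological degrees $\le r$, hence $H^{a,r}(X,A) = 0$ for $a > r$; with $a = 2k-n$ and $r = k-n$ one has $a - r = k \ge 1 > 0$, so this is immediate. No $\lim^1$ issues arise since, by Lemma \ref{dh5t43}, the inverse system is eventually constant. So the proof is essentially a one-line citation of Lemma \ref{dh5t43} together with the slice computation of Theorem \ref{dhu5t3}, and I would write it as such.
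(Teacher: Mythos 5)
Your proposal is correct and is exactly the intended argument: the paper states this as an immediate corollary of Lemma \ref{dh5t43} (specialized to $p=q=n$, where the relevant truncation reduces to $s_{\max(0,n)}\MGL_S\wedge M_A$), combined with the slice identification of Theorem \ref{dhu5t3} and the vanishing of the cohomology sheaves of $\caM(r)\otimes A$ in degrees above $r$ (and for $r<0$). The only cosmetic remark is that for $n>0$ the vanishing comes from negative weight ($\caM(r)\cong 0$ for $r<0$) rather than from the degree-versus-weight bound, but the conclusion is the same.
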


\begin{corollary}
Let $S$ be the spectrum of a Dedekind domain of mixed characteristic.
Let $R$ be a localization of $\integers$ such that $\Phi_S \wedge M_R$
is an isomorphism. Then for any $R$-module $A$
and $n \in \integers$ we have $\upi_{2n,n}(\MGL_S \wedge M_A) \cong \underline{L}_n \otimes A$,
where the latter sheaf is the constant sheaf on $L_n \otimes A$.
\end{corollary}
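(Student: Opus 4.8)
The plan is to combine the two previous corollaries (computing $\upi_{p,q}(\MGL_S \wedge M_A)$ via $\upi_{p,q}(\MGL_S\left<0,k\right> \wedge M_A)$, which equals $\upi_{p,q}(\MGL_S\left<\max(0,q),n\right> \wedge M_A)$ for $n$ large) with Lemma \ref{dh5t43} specialized to the diagonal index $p = 2n$, $q = n$. First I would apply Lemma \ref{dh5t43} with $(p,q)$ replaced by $(2n,n)$: since $2n - n = n$ and $2n - 2n = 0$, the condition "$n \ge p-q$ or $n \ge p-2q$" is met already at the slice level $\left<\max(0,n), n\right> = \left<\max(0,n),n\right>$ — for $n \ge 0$ this is $\left<n,n\right> = s_n \MGL_S$, and for $n < 0$ this is $\left<0,n\right>$, which however is only defined for $n \ge 0$; so the relevant statement is that $\upi_{2n,n}(\MGL_S \wedge M_A) \cong \upi_{2n,n}(s_n \MGL_S \wedge M_A)$ for $n \ge 0$, and for $n < 0$ one argues directly that the limit of $\upi_{2n,n}(\MGL_S\left<0,k\right> \wedge M_A)$ vanishes because all relevant slices $s_k \MGL_S \wedge M_A \cong \Sigma^{2k,k}\MA_S \otimes L_k$ have $\upi_{2n,n} = 0$ when $n < 0 \le k$ (negative-weight motivic cohomology vanishes).

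The core computation is then the identification $\upi_{2n,n}(s_n \MGL_S \wedge M_A)$. By Theorem \ref{dhu5t3} (more precisely its second line, with $R$-coefficients, valid under our standing hypothesis that $\Phi_S \wedge M_R$ is an isomorphism) we have $s_n \MGL_S \wedge M_A \cong \Sigma^{2n,n} \MA_S \otimes L_n$. Hence $\upi_{2n,n}(s_n \MGL_S \wedge M_A) \cong \upi_{0,0}(\MA_S) \otimes L_n$. The sheaf $\upi_{0,0}(\MA_S)$ is the Nisnevich sheafification of $X \mapsto H^{0,0}(X, A)$; by the recollection in the Preliminaries section, $\caM(0) = S^0\underline{\integers}$, so $H^{0,0}(X,A) = A^{\pi_0(X)}$, and thus $\upi_{0,0}(\MA_S)$ is the constant sheaf $\underline{A}$. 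Therefore $\upi_{2n,n}(\MGL_S \wedge M_A) \cong \underline{A} \otimes L_n = \underline{L_n \otimes A}$, which is exactly the constant sheaf on $L_n \otimes A$ asserted in the statement. For $n < 0$ both sides vanish ($L_n = 0$), consistently.

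The main obstacle I anticipate is purely bookkeeping: making sure Lemma \ref{dh5t43}'s index conditions genuinely collapse the tower at the slice $s_n$ on the diagonal, i.e. checking that no slices $s_k \MGL_S \wedge M_A$ with $k \ne n$ contribute to $\upi_{2n,n}$. This is where one uses that $s_k \MGL_S \wedge M_A \cong \Sigma^{2k,k}\MA_S \otimes L_k$ together with Lemma \ref{vftjt} ($\MA_S \in \SH(S)_{h\ge 0}$), so $\Sigma^{2k,k}\MA_S \in \SH(S)_{h \ge k}$ has vanishing $\upi_{p,q}$ for $p - q < k$; taking $p-q = n < k$ kills slices above $n$, and one similarly needs $\upi_{2n,n}(\Sigma^{2k,k}\MA_S) = 0$ and $\upi_{2n\pm 1,n}(\Sigma^{2k,k}\MA_S) = 0$ for $k < n$, which follows from the computation of motivic cohomology sheaves in the relevant bidegrees (negative weight vanishing plus the cohomological-dimension bound of \cite[Corollary 4.4]{geisser.dede}). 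Once these vanishings are in place, the long exact sequences of homotopy sheaves attached to the slice tower triangles $s_k \MGL_S \wedge M_A \to \MGL_S\left<0,k\right> \wedge M_A \to \MGL_S\left<0,k-1\right> \wedge M_A \to (\cdot)[1]$ give the isomorphism by a finite induction exactly as in the proof of the preceding Proposition, and the result follows.
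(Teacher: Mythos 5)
Your proof is correct and follows the route the paper intends for this corollary (which it states without proof): apply Lemma \ref{dh5t43} to collapse the slice tower to $s_n\MGL_S\wedge M_A\cong \Sigma^{2n,n}\MA_S\otimes L_n$ and identify $\upi_{0,0}(\MA_S)$ with the constant sheaf $\underline{A}$, exactly as in the proofs of the neighbouring corollaries. One tiny misattribution: for $n<0\le k$ the vanishing $\upi_{2n,n}(\Sigma^{2k,k}\MA_S)=0$ is \emph{not} a negative-weight phenomenon (the weight $k-n$ is positive); it follows from the bound that $\caM(r)$ has cohomology sheaves concentrated in degrees $\le r$, which you do correctly invoke later.
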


\begin{corollary}
Let $S$ be the spectrum of a Dedekind domain of mixed characteristic.
Let $R$ be a localization of $\integers$ such that $\Phi_S \wedge M_R$
is an isomorphism. Then for any $R$-module $A$
and $n \in \integers$ we have $\upi_{2n+1,n}(\MGL_S \wedge M_A) \cong \caO^* \otimes L_{n+1} \otimes A$.
\end{corollary}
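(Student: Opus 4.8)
The plan is to mimic exactly the computation of $\upi_{2n,n}$ carried out in the preceding corollary, only one slot over in the first index. By Lemma \ref{dh5t43} (applied with $p=2n+1$, $q=n$, so that $p-q=n+1\ge\max(0,q)=\max(0,n)$ once $n\ge 0$, and with $p-2q=1$ taking care of the range $n<0$), we have a canonical isomorphism
$$\upi_{2n+1,n}(\MGL_S \wedge M_A) \cong \upi_{2n+1,n}(\MGL_S \left<\max(0,n),m\right> \wedge M_A)$$
for $m$ large, say $m \ge n+1$. So it suffices to read off the left-hand sheaf from a bounded stretch of the slice tower.

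Next I would run the long exact sequence of homotopy sheaves attached to the exact triangle
$$s_m \MGL_S \wedge M_A \to \MGL_S \left<\max(0,n),m\right> \wedge M_A \to \MGL_S \left<\max(0,n),m-1\right> \wedge M_A \to s_m \MGL_S \wedge M_A[1],$$
using $s_k \MGL_S \wedge M_A \cong \Sigma^{2k,k} \MA_S \otimes L_k$ from Theorem \ref{dhu5t3}. Since $\upi_{p',q'} \Sigma^{2k,k}\MA_S = \upi_{p'-2k,q'-k}\MA_S$ and $\MA_S \in \SH(S)_{h\ge 0}$ (Lemma \ref{vftjt}), the sheaves $\upi_{2n+1,n}(\Sigma^{2k,k}\MA_S \otimes L_k)$ and $\upi_{2n,n}(\Sigma^{2k,k}\MA_S \otimes L_k)$ both vanish as soon as $2n+1-2k < n-k$, i.e. $k > n+1$; so telescoping the tower down, only the slices $s_n$ and $s_{n+1}$ can contribute. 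Concretely $\upi_{2n+1,n}(s_{n+1}\MGL_S \wedge M_A) = \upi_{2n+1,n}(\Sigma^{2n+2,n+1}\MA_S) \otimes L_{n+1} = \upi_{-1,-1}(\MA_S)\otimes L_{n+1}$, and $\upi_{-1,-1}\MA_S \cong \caO^*\otimes A$ because $\caM(1)\cong\caO^*[-1]$ as recalled in the Preliminaries (for $A=\integers$; the general $R$-module case follows since $\MA_S \cong \MZ_S \wedge M_A$ and $M_A$ is a filtered colimit of finite free Moore spectra, or directly from the description of $\caM(1)$ with $A$-coefficients). The other potentially contributing sheaf $\upi_{2n+1,n}(s_n\MGL_S\wedge M_A) = \upi_{1,0}(\MA_S)\otimes L_n$ vanishes since $\caM(0) = S^0\underline{\integers}$ has no cohomology in positive degrees; likewise $\upi_{2n,n}(s_n\MGL_S\wedge M_A) = \upi_{0,0}(\MA_S)\otimes L_n$ sits in degree matching $\caM(0)$ in cohomological degree $0$, and one checks it maps isomorphically onward so that it does not obstruct. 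Assembling the long exact sequence over the two relevant stages then yields $\upi_{2n+1,n}(\MGL_S\wedge M_A) \cong \caO^* \otimes L_{n+1}\otimes A$.

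The one point requiring genuine care — the expected main obstacle — is the bookkeeping of the boundary maps in the long exact sequence, i.e. verifying that the relevant connecting maps into and out of $\upi_{\ast,\ast}(s_n\MGL_S\wedge M_A)$ either vanish or are injective/surjective in the right way, so that the only surviving term is the $s_{n+1}$-contribution $\caO^*\otimes L_{n+1}\otimes A$. This is the sheaf-level analogue of the splitting argument used for $\pi_{2n,n}$ in Theorem \ref{grfe5z4}; one resolves it either by the same naturality-of-the-Lazard-ring-map splitting (which handles the $L_n$ in degree $(2n,n)$) together with the degree count above killing everything in degree $(2n+1,n)$ coming from $s_n$, or more cleanly by observing that the tower $\MGL_S\left<\max(0,n),m\right>$ is constant past $m=n+1$ on this bidegree (Lemma \ref{xfhtjt}-style) so that the triangle with $s_{n+1}$ alone already computes the answer. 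Once that is pinned down, the identification of $\upi_{2n+2,n+1}\Sigma^{0,0}$-type terms with $\caO^*\otimes A$ via $\caM(1)\cong\caO^*[-1]$ is immediate, and tensoring with $L_{n+1}$ (a free abelian group in each degree, so the tensor is exact) completes the proof.
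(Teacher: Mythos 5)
Your proposal is correct and follows essentially the same route as the paper: reduce via Lemma \ref{dh5t43} to a finite stretch of the slice tower, then use the long exact sequence of homotopy sheaves with $s_k\MGL_S\wedge M_A\cong\Sigma^{2k,k}\MA_S\otimes L_k$, the vanishing of $\upi_{1,0}(\MA_S)$ and $\upi_{2,0}(\MA_S)$, and the identification $\upi_{-1,-1}(\MA_S)\cong\caO^*\otimes A$ coming from $\caM(1)\cong\caO^*[-1]$. The paper simply invokes the lemma in its sharpest form to pass directly to $\MGL_S\left<n,n+1\right>\wedge M_A$, so the telescoping and the worry about extra boundary maps in your last paragraph are unnecessary, but the substance is the same.
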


\begin{proof}
By Lemma \ref{dh5t43} we have $$\upi_{2n+1,n}(\MGL_S \wedge M_A) \cong
\upi_{2n+1,n}(\MGL_S \left< n,n+1 \right> \wedge M_A).$$ The long exact sequence
of sheaves associated to the exact triangle
$$s_{n+1} \MGL_S \wedge M_A \to \MGL_S \left< n,n+1 \right> \wedge M_A \to s_n \MGL_S \wedge M_A
\to s_{n+1} \MGL_S \wedge M_A [1]$$ together with
$$\upi_{2n+1,n}(s_n \MGL_S \wedge M_A [-1])= \upi_{2n+1,n}(s_n \MGL_S \wedge M_A)=0$$ and
$$\upi_{0,0}(\Sigma^{1,1} \MA_S) \cong \caO^* \otimes A$$
gives the result.
\end{proof}

Similarly we get

\begin{corollary}
Let $S$ be the spectrum of a Dedekind domain of mixed characteristic.
Let $R$ be a localization of $\integers$ such that $\Phi_S \wedge M_R$
is an isomorphism. Then for any $R$-module $A$
and $n \in \integers$ there is an exact sequence
$$\underline{K}^M_{1-n} \otimes A \to \upi_{n+1,n}(\MGL_S \wedge M_A) \to \caH_{\mathrm{mot}}^{-n-1,-n}(-,A) \to 0,$$
where the latter group denotes the motivic cohomology sheaf in degees $(-n-1,-n)$ and $A$-coefficients.
\end{corollary}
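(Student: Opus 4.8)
The plan is to imitate the proof of the preceding corollary (the one computing $\upi_{2n+1,n}$), using the pair of slices $(s_0,s_1)$ where that argument used $(s_n,s_{n+1})$. The underlying reason these two slices are the relevant ones is that $s_m\MGL_S\wedge M_A\cong\Sigma^{2m,m}\MA_S\otimes L_m$, hence
$$\upi_{n+1,n}(s_m\MGL_S\wedge M_A)\cong\caH_{\mathrm{mot}}^{2m-n-1,m-n}(-,A)\otimes L_m,$$
which vanishes for $m<0$ since $L_m=0$ and for $m\ge 2$ since then $2m-n-1>m-n$ and the cohomology sheaves of $\caM(m-n)$ vanish above degree $m-n$ by \cite[Corollary 4.4]{geisser.dede}. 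So the content is the case $n\le 0$, which I would treat directly; for $n\ge 2$ all three terms of the asserted sequence vanish, and for $n=1$ it reduces to the identification $\upi_{2,1}(\MGL_S\wedge M_A)\cong\underline{K}^M_0\otimes A$ provided by Lemma \ref{dh5t43} together with $s_1\MGL_S\wedge M_A\cong\Sigma^{2,1}\MA_S$.

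For $n\le 0$, I would first invoke Lemma \ref{dh5t43} to identify the canonical map
$$\upi_{n+1,n}(\MGL_S\wedge M_A)\longrightarrow\upi_{n+1,n}(\MGL_S\left<0,1\right>\wedge M_A)$$
as an isomorphism: here $\max(0,n)=0$ and the upper index $1=(n+1)-n$ lies in the admissible range. Then I would apply $\upi_{\ast,\ast}$ to the slice triangle
$$s_1\MGL_S\wedge M_A\to\MGL_S\left<0,1\right>\wedge M_A\to s_0\MGL_S\wedge M_A\to s_1\MGL_S\wedge M_A[1]$$
and read off the four-term portion
$$\upi_{n+1,n}(s_1\MGL_S\wedge M_A)\to\upi_{n+1,n}(\MGL_S\left<0,1\right>\wedge M_A)\to\upi_{n+1,n}(s_0\MGL_S\wedge M_A)\to\upi_{n,n}(s_1\MGL_S\wedge M_A)$$
of the long exact sequence of homotopy sheaves. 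Using $s_0\MGL_S\wedge M_A\cong\MA_S$, $s_1\MGL_S\wedge M_A\cong\Sigma^{2,1}\MA_S$ (because $L_0\cong L_1\cong\integers$) and $\upi_{p,q}(\MA_S)\cong\caH_{\mathrm{mot}}^{-p,-q}(-,A)$, these four groups become $\caH_{\mathrm{mot}}^{1-n,1-n}(-,A)$, then the middle term, then $\caH_{\mathrm{mot}}^{-n-1,-n}(-,A)$, then $\caH_{\mathrm{mot}}^{2-n,1-n}(-,A)$; the last vanishes because $2-n>1-n$. Finally I would identify $\caH_{\mathrm{mot}}^{1-n,1-n}(-,A)\cong\underline{K}^M_{1-n}\otimes A$, using that in the hyperhomology spectral sequence for $\caM(1-n)\otimes^{\LLL}\underline{A}$ the top total degree $1-n$ receives only the contribution $\caH^{1-n}(\caM(1-n))\otimes A=\underline{K}^M_{1-n}\otimes A$, with no $\Tor$ entering since $\caM(1-n)$ is concentrated in degrees $\le 1-n$. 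Combining these identifications with the isomorphism from Lemma \ref{dh5t43} yields exactly the claimed exact sequence.

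I do not expect any genuinely hard step; this is a close transcription of the proof of the previous corollary. The only points requiring care are bookkeeping ones: verifying that the index $1$ lies in the admissible range of Lemma \ref{dh5t43} for the $n$ under consideration and disposing of the degenerate values $n\ge 1$, and the top-degree $\Tor$-vanishing needed to pass from $\caH_{\mathrm{mot}}^{1-n,1-n}(-,A)$ to $\underline{K}^M_{1-n}\otimes A$, which is immediate from the amplitude bound on $\caM(1-n)$. It is worth noting explicitly that the left-hand map of the exact sequence is in general not injective, in accordance with the statement: the term immediately to its left in the long exact sequence is $\upi_{n+2,n}(s_0\MGL_S\wedge M_A)\cong\caH_{\mathrm{mot}}^{-n-2,-n}(-,A)$, which need not vanish.
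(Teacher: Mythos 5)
Your proof is correct and is exactly the argument the paper intends: the corollary is stated after ``Similarly we get,'' i.e.\ one transposes the proof of the preceding corollary, and your version does precisely that — Lemma \ref{dh5t43} reduces to $\MGL_S\left<0,1\right>\wedge M_A$, the slice triangle for $(s_0,s_1)$ gives the four-term exact sequence, and the outer identifications ($\underline{K}^M_{1-n}\otimes A$ on the left, vanishing of $\caH_{\mathrm{mot}}^{2-n,1-n}$ on the right) follow from the amplitude bound on $\caM(r)$. Your explicit disposal of the degenerate cases $n\ge 1$ and the remark on non-injectivity of the left-hand map are correct bookkeeping that the paper leaves implicit.
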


We also have

\begin{proposition}
Let $S$ be the spectrum of a Dedekind domain of mixed characteristic.
Let $R$ be a localization of $\integers$ such that $\Phi_S \wedge M_R$
is an isomorphism. Then for any $R$-module $A$
and $n \in \integers$ there is an exact sequences
$$H^{3,2}(S) \otimes A \otimes L_{n+2} \to \pi_{2n+1,n} \MGL_S \to H^{1,1}(S,A) \otimes L_{n+1} \to 0.$$
If $A$ is torsionfree the first map is also injective.
\end{proposition}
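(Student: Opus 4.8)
The plan is to read off $\pi_{2n+1,n}(\MGL_S\wedge M_A)$ from the bottom of the slice tower, in the same spirit as the proofs of Corollary~\ref{sgtjtr} and Theorem~\ref{grfe5z4}. The basic input is that $s_j\MGL_S\wedge M_A\cong\Sigma^{2j,j}\MA_S\otimes L_j$ (Theorem~\ref{dhu5t3} with $R$-coefficients), so that, using $\pi_{p,q}(\MA_S)=H^{-p,-q}(S,A)$,
$$\pi_{2n+1,n}(s_j\MGL_S\wedge M_A)=H^{2j-2n-1,\,j-n}(S,A)\otimes L_j,\qquad\pi_{2n,n}(s_j\MGL_S\wedge M_A)=H^{2j-2n,\,j-n}(S,A)\otimes L_j.$$
Since $S$ is one-dimensional and $\caM(r)$ has cohomology sheaves only in degrees $\le r$ by \cite[Corollary 4.4]{geisser.dede}, motivic cohomology of $S$ with any coefficients vanishes in bidegrees $(p,q)$ with $p>q+1$ or $q<0$; hence the first group above vanishes for $j\ge n+3$ and the second for $j\ge n+2$.

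First I would cut the slice tower off at a finite stage. By Corollary~\ref{dsgtjt} at $X=S$ together with Lemma~\ref{jewet45}, $\pi_{2n+1,n}(\MGL_S\wedge M_A)$ equals the eventual value of the tower $\bigl(\pi_{2n+1,n}(\MGL_S\langle 0,k\rangle\wedge M_A)\bigr)_k$; feeding the vanishings above into the slice triangles $s_j\MGL_S\wedge M_A\to\MGL_S\langle 0,j\rangle\wedge M_A\to\MGL_S\langle 0,j-1\rangle\wedge M_A\to$ shows this tower is already constant from $k=n+2$ on, so $\pi_{2n+1,n}(\MGL_S\wedge M_A)\cong\pi_{2n+1,n}(\MGL_S\langle 0,n+2\rangle\wedge M_A)$. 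Next I would throw away the slices of weight $\le n$. The octahedral axiom applied to $f_{n+3}\MGL_S\to f_{n+1}\MGL_S\to f_0\MGL_S=\MGL_S$ produces an exact triangle
$$\MGL_S\langle n+1,n+2\rangle\to\MGL_S\langle 0,n+2\rangle\to\MGL_S\langle 0,n\rangle\to\MGL_S\langle n+1,n+2\rangle[1],$$
and since $\MGL_S\langle 0,n\rangle\wedge M_A$ is built by finitely many slice triangles out of $s_0,\dots,s_n$, for which $\pi_{2n+1,n}$ and $\pi_{2n+2,n}$ both vanish (negative weight for $j<n$, cohomological degree $<0$ for $j=n$), induction up that short tower gives $\pi_{2n+1,n}(\MGL_S\langle 0,n\rangle\wedge M_A)=\pi_{2n+2,n}(\MGL_S\langle 0,n\rangle\wedge M_A)=0$. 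Hence $\pi_{2n+1,n}(\MGL_S\wedge M_A)\cong\pi_{2n+1,n}(\MGL_S\langle n+1,n+2\rangle\wedge M_A)$.

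Finally I would apply $\pi_{*,n}(-\wedge M_A)$ to the slice triangle $s_{n+2}\MGL_S\to\MGL_S\langle n+1,n+2\rangle\to s_{n+1}\MGL_S\to s_{n+2}\MGL_S[1]$, which gives the exact sequence
$$\pi_{2n+2,n}(s_{n+1}\MGL_S\wedge M_A)\to\pi_{2n+1,n}(s_{n+2}\MGL_S\wedge M_A)\to\pi_{2n+1,n}(\MGL_S\wedge M_A)\to\pi_{2n+1,n}(s_{n+1}\MGL_S\wedge M_A)\to\pi_{2n,n}(s_{n+2}\MGL_S\wedge M_A).$$
The formulas above identify the four flanking terms with $H^{0,1}(S,A)\otimes L_{n+1}$, $H^{3,2}(S,A)\otimes L_{n+2}$, $H^{1,1}(S,A)\otimes L_{n+1}$ and $H^{4,2}(S,A)\otimes L_{n+2}$. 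Now $H^{4,2}(S,A)=0$ (cohomological degree $4>2+1$), which yields the surjection onto $H^{1,1}(S,A)\otimes L_{n+1}$; $H^{4,2}(S)=0$ forces $H^{3,2}(S,A)\cong H^{3,2}(S)\otimes A$ by the universal coefficient sequence; and since $\caM(1)\cong\caO^*[-1]$ one has $H^{0,1}(S,A)\cong\mathbb H^{-1}(S,\caO^*\otimes^{\mathbf L}A)$, which vanishes whenever $A$ is torsionfree because then $\caO^*\otimes^{\mathbf L}A$ sits in degree $0$ — giving injectivity of the first map in that case. This is exactly the asserted sequence.

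The one genuinely delicate point I anticipate is the $A$-coefficient bookkeeping in this last step — in particular the identification $\pi_{2n+2,n}(s_{n+1}\MGL_S\wedge M_A)\cong H^{0,1}(S,A)\otimes L_{n+1}$ and its vanishing for torsionfree $A$, which is precisely what drives the injectivity statement; the remaining steps are routine repetitions of the slice-tower manipulations already carried out for Corollary~\ref{sgtjtr} and Theorem~\ref{grfe5z4}.
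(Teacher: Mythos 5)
Your argument is correct and is exactly the slice-tower computation the paper intends (the paper states this proposition without proof, right after the neighboring corollaries established by the same method): truncate the tower at $k=n+2$ via Corollary \ref{dsgtjt}, kill the slices of weight $\le n$, and read the five-term exact sequence off the triangle $s_{n+2}\MGL_S \to \MGL_S\left<n+1,n+2\right> \to s_{n+1}\MGL_S$, with injectivity for torsionfree $A$ coming from the vanishing of $H^{0,1}(S,A)\cong\mathbb{H}^{-1}(S,\caO^*\otimes^{\mathbf{L}}A)$. The only discrepancy is notational: the middle term of the paper's display should read $\pi_{2n+1,n}\MGL_S\wedge M_A$, which is what you in fact compute.
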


\begin{proposition}
Let $S$ be the spectrum of a Dedekind domain of mixed characteristic.
Let $R$ be a localization of $\integers$ such that $\Phi_S \wedge M_R$
is an isomorphism. Let $X$ be an essentially smooth scheme over $S$.
Then for any $R$-module $A$,
$n \in \integers$ and $i \ge 2$ we have $\MGL_S^{2n+i,n}(X,A)=0$.
\end{proposition}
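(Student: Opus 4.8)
The plan is to descend the slice tower of $\MGL_S\wedge M_A$, whose layers are identified by Theorem \ref{dhu5t3}, and reduce the statement to a vanishing in motivic cohomology. First I would rewrite $\MGL_S^{2n+i,n}(X,A)=\upi^\pre_{-2n-i,-n}(\MGL_S\wedge M_A)(X)$ and apply Corollary \ref{dsgtjt} with bidegree $(-2n-i,-n)$ (this, and the statements below, extend from $\Sm_S$ to essentially smooth $X$ by continuity), obtaining a canonical isomorphism
$$\MGL_S^{2n+i,n}(X,A)\;\cong\;\lim_k\bigl(\MGL_S\left<0,k\right>\wedge M_A\bigr)^{2n+i,n}(X).$$
It therefore suffices to show $\bigl(\MGL_S\left<0,k\right>\wedge M_A\bigr)^{2n+i,n}(X)=0$ for every $k\ge 0$. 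For fixed $k$, the object $\MGL_S\left<0,k\right>$ is a finite iterated extension of the slices $s_m\MGL_S$ with $0\le m\le k$, so $\MGL_S\left<0,k\right>\wedge M_A$ is a finite iterated extension of the $s_m\MGL_S\wedge M_A$. By Theorem \ref{dhu5t3} in its $R$-coefficient form, $s_m\MGL_S\wedge M_A\cong\Sigma^{2m,m}\MA_S\otimes L_m$, whence
$$\bigl(s_m\MGL_S\wedge M_A\bigr)^{2n+i,n}(X)\;\cong\;H^{2(n+m)+i,\;n+m}_{\mathrm{mot}}(X,A)\otimes L_m .$$
Running the long exact cohomology sequences along the filtration (an induction on its length), the claim reduces to showing that each of these groups vanishes for $0\le m\le k$.

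The weight of $H^{2(n+m)+i,\,n+m}_{\mathrm{mot}}(X,A)$ is $n+m$. If $n+m<0$ the group vanishes since $\caM(n+m)\cong 0$. If $n+m\ge 0$, the cohomological degree $2(n+m)+i$ strictly exceeds twice the weight because $i\ge 2$; so what is really needed is the vanishing
$$H^{a,b}_{\mathrm{mot}}(X,A)=0\quad\text{whenever }a>2b,$$
for $X$ essentially smooth over a Dedekind domain of mixed characteristic and any abelian group $A$. This is the one nonformal ingredient, and the part whose exact reference I expect to be the main thing to nail down. I would deduce it from a degree count in the coniveau spectral sequence computing $H^{*,b}_{\mathrm{mot}}(X,-)$: its $E_1$-terms over codimension-$c$ points are assembled from $H^{a-2c,\,b-c}_{\mathrm{mot}}(k(x),\integers)$, which vanish unless $a-2c\le b-c$, i.e.\ unless $a\le 2b$; here one uses purity, the field-level vanishing $H^{s,t}_{\mathrm{mot}}(F,\integers)=0$ for $s>t$, and the degeneration of this spectral sequence over such bases afforded by the Gersten resolution of the motivic complexes $\caM(b)$, cf.\ \cite{geisser.dede} (for $b=0$ this is just the acyclicity of constant sheaves on the irreducible components of $X$); arbitrary coefficients $A$ then follow via the universal coefficient sequence. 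Granting this, every layer above vanishes, hence $\bigl(\MGL_S\left<0,k\right>\wedge M_A\bigr)^{2n+i,n}(X)=0$ for all $k$, and the limit — hence $\MGL_S^{2n+i,n}(X,A)$ — is zero.
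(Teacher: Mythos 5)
Your reduction coincides with the paper's: pass to the slice tower via Corollary \ref{dsgtjt} (completeness from Corollary \ref{gfrerz} plus the Milnor sequence with vanishing $\lim^1$ from Lemma \ref{jewet45}), identify the layers as $\Sigma^{2m,m}\MA_S\otimes L_m$, and reduce to the vanishing of $H^{2(n+m)+i,\,n+m}_{\mathrm{mot}}(X,A)$ for all $m\ge 0$ with weight $n+m\ge 0$. That part is correct, including the extension to essentially smooth $X$ by continuity.

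The gap is in the motivic-cohomology vanishing you invoke and in how you propose to prove it. You assert $H^{a,b}_{\mathrm{mot}}(X,A)=0$ whenever $a>2b$, to be deduced from the coniveau spectral sequence using purity and ``the degeneration of this spectral sequence \dots afforded by the Gersten resolution of the motivic complexes.'' Over a mixed-characteristic Dedekind domain the Gersten resolution for the motivic complexes is not available in the generality you need --- it is essentially the Gersten conjecture in mixed characteristic. What \cite{geisser.dede} supplies (and what the paper actually uses, in Lemma \ref{vftjt}) is only the far weaker consequence that the cohomology sheaves of $\caM(r)$ vanish in degrees $>r$; it does not give the exact Gersten complex from which your bound $a\le 2b$ on the nonzero $E_1$-terms would follow. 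So the ``one nonformal ingredient'' you flag cannot be nailed down along the route you sketch, and the range $a\ge 2b+1$ you claim is in any case stronger than what the paper establishes. Fortunately you do not need it: since $i\ge 2$, every group occurring in your filtration argument has $a\ge 2b+2$, and the vanishing $H^{a,b}(X,A)=0$ for $a\ge 2b+2$ is exactly the input the paper uses. It follows softly from the fact that the Bloch--Levine cycle complexes of weight $b$ are concentrated in cohomological degrees $\le 2b$ over the residue fields of the points of $S$, so that the hypercohomology over the one-dimensional base $S$ can contribute at most one further degree, namely $2b+1$. Replacing your coniveau argument by this observation closes the gap and recovers the paper's proof.
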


\begin{proof}
This follows from the above considerations and the fact that for $X \in \Sm_S$
we have $H^{p,q}(X)=0$ for $p \ge 2q+2$, since motivic cohomology is computed
as hypercohomology over $S$ of the Bloch-Levine cycle complexes.
\end{proof}

We leave assertions about the groups $\pi_{2n-1,n} \MGL_S \wedge M_A$,
$\pi_{n,n} \MGL_S \wedge M_A$ and $\pi_{n-1,n} \MGL_S \wedge M_A$ to the interested reader.

\bibliographystyle{plain}
\bibliography{ma}

\vspace{0.1in}

\begin{center}
Fakult{\"a}t f{\"u}r Mathematik, Universit{\"a}t Osnabr\"uck, Germany.\\
e-mail: markus.spitzweck@uni-osnabrueck.de
\end{center}

\end{document}